\setlist[itemize,1]{label={--\,}}
\setlist[enumerate,1]{label=(\roman*)}
\DeclareRobustCommand{\gobblefour}[4]{}
\renewcommand{\tocsection}[3]{%
	\indentlabel{\@ifnotempty{#2}{\ignorespaces#1 \makebox[\widthof{00.}][l]{#2.}\quad}}#3}
\renewcommand{\tocsubsection}[3]{%
	\indentlabel{\@ifnotempty{#2}{\ignorespaces#1 \makebox[\widthof{00.0.}][l]{#2.}\quad}}#3}
\setlist[itemize]{leftmargin=*}
\setlist[enumerate]{leftmargin=*}
\newtheorem{thm}{Theorem}[section]
\newtheorem{defin}[thm]{Definition}
\newtheorem{notation}[thm]{Notation}
\newtheorem{prop}[thm]{Proposition}
\newtheorem{cor}[thm]{Corollary}
\newtheorem{lemma}[thm]{Lemma}
\theoremstyle{definition}
\newtheorem{ex}[thm]{Example}
\newtheorem{rem}[thm]{Remark}
\newcommand{\F}{\mathbb F}
\newcommand{\N}{\mathbb N}
\newcommand{\bP}{\mathbb P}
\newcommand{\Q}{\mathbb Q}
\newcommand{\Z}{\mathbb Z}
\newcommand{\adm}{\mathrm{adm}}
\newcommand{\Ad}{\mathrm{Alg}^{\mathrm{ff}}}
\newcommand{\aff}{\mathrm{aff}}
\newcommand{\Aff}{\mathrm{Aff}}
\newcommand{\Alg}{\mathrm{Alg}}
\newcommand{\Art}{\mathrm{Art}}
\newcommand{\cont}{\mathrm{cont}}
\newcommand{\crys}{\mathrm{cris}}
\newcommand{\Def}{\mathrm{Def}}
\newcommand{\ff}{\mathrm{ff}}
\newcommand{\form}{\mathrm{form}}
\newcommand{\Frm}{\mathrm{FSch}}
\newcommand{\Frob}{\mathrm{Frob}}
\newcommand{\Fil}{\mathrm{Fil}}
\newcommand{\Gal}{\mathrm{Gal}}
\newcommand{\GL}{\mathrm{GL}}
\newcommand{\Hom}{\mathrm{Hom}}
\newcommand{\Mat}{\mathrm{M}}
\newcommand{\op}{\mathrm{op}}
\newcommand{\PDef}{\mathrm{PDef}}
\newcommand{\prof}{\mathrm{prof}}
\newcommand{\ps}{\mathrm{ps}}
\newcommand{\reg}{\mathrm{reg}}
\newcommand{\rig}{\mathrm{rig}}
\newcommand{\Rig}{\mathrm{Rig}}
\newcommand{\sep}{\mathrm{sep}}
\newcommand{\spec}{\mathrm{sp}}
\newcommand{\Sets}{\mathrm{Sets}}
\newcommand{\tri}{\mathrm{tri}}
\newcommand{\univ}{\mathrm{univ}}
\newcommand{\wo}{\mathrm{sqS}}
\DeclareMathOperator{\Aut}{Aut}
\DeclareMathOperator{\End}{End}
\DeclareMathOperator{\ev}{ev}
\DeclareMathOperator{\Frac}{Frac}
\DeclareMathOperator{\Spf}{Spf}
\DeclareMathOperator{\Spm}{Spm}
\DeclareMathOperator{\Spec}{Spec}
\DeclareMathOperator{\tr}{tr}
\newcommand{\xto}{\xrightarrow}
\newcommand{\into}{\hookrightarrow}
\newcommand{\onto}{\twoheadrightarrow}
\newcommand{\ovl}{\overline}
\newcommand{\wtl}{\widetilde}
\newcommand{\wht}{\widehat}
\newcommand{\ccirc}{\kern0.5ex\vcenter{\hbox{$\scriptstyle\circ$}}\kern0.5ex}
\newcommand{\cA}{\mathcal{A}}
\newcommand{\cB}{\mathcal{B}}
\newcommand{\calC}{\mathcal{C}}
\newcommand{\cD}{\mathcal{D}}
\newcommand{\cE}{\mathscr{E}}
\newcommand{\cL}{\mathcal{L}}
\newcommand{\cM}{\mathcal{M}}
\newcommand{\cO}{\mathcal{O}}
\newcommand{\cR}{\mathcal{R}}
\newcommand{\cT}{\mathcal{T}}
\newcommand{\cX}{\mathcal{X}}
\newcommand{\cY}{\mathcal{Y}}
\newcommand{\cU}{\mathcal{U}}
\newcommand{\cV}{\mathcal{V}}
\newcommand{\cW}{\mathcal{W}}
\newcommand{\cZ}{\mathcal{Z}}
\newcommand{\bV}{{\mathbf{V}}}
\newcommand{\Qp}{\overline{\Q}_p}
\newcommand{\Zp}{\overline{\Z}_p}
\newcommand{\sq}{\square}
\newcommand{\fX}{{\mathfrak{X}}}
\newcommand{\fm}{{\mathfrak{m}}}
\newcommand{\fp}{{\mathfrak{p}}}
\newcommand{\fU}{{\mathfrak{U}}}
\title{Lattices in rigid analytic representations}
\author{Andrea Conti and Emiliano Torti}
\thanks{Andrea Conti, IWR, Heidelberg University, \url{andrea.conti@iwr.uni-heidelberg.de}, ORCID: 0000-0001-9279-9451; Emiliano Torti, Laboratoire GAATI, Université de la Polynésie française, Tahiti, \url{torti@gaati.org}, ORCID: 0000-0003-4431-3001}
\begin{document}
	
\begin{abstract}
For a profinite group $G$ and a rigid analytic space $X$, we study when an $\cO_X(X)$-linear representation $V$ of $G$ admits a lattice, i.e. an $\cO_\cX(\cX)$-linear model for a suitable formal model $\cX$ of $X$ in the sense of Berthelot. We give a positive answer, under mild assumptions, when $X$ is strictly quasi-Stein and regular. As a consequence, we are able to describe explicit open rational subdomains of $X$ over which $V$ is constant after reduction modulo a power of $p$. We give applications in two different directions. First, we prove explicit results on the reduction modulo powers of $p$ of sheaves of crystalline and semistable representations of fixed weight. Second, 
we deduce a result on the pseudorepresentation carried by the Coleman--Mazur eigencurve, that can be made explicit whenever equations for a rational subdomain of the eigencurve are given.
\end{abstract}

	\maketitle

\setcounter{tocdepth}{1}
\tableofcontents

\section{Introduction}

Fix a prime $p$, and let $f$ be a modular eigenform for $\GL_{2/\Q}$ of weight $k\ge 2$ and level $N$, with $p\nmid N$. The associated $p$-adic representation of $\Gal(\ovl\Q/\Q)$ is unramified away from $Np$, and crystalline at $p$. When $f$ varies in a $p$-adic family along the Coleman--Mazur eigencurve $\cE$, the corresponding global Galois representation also varies in a $p$-adic analytic way: this means that, locally on a sufficiently small irreducible subdomain $X$ of $\cE$, we can find a continuous representation
\[ \rho_X\colon \Gal(\ovl\Q/\Q) \to\GL_2(\cO_X(X)) \]
that specializes to the representations attached to the classical eigenforms in $X$. 
If $X$ is a whole irreducible component of the eigencurve, we cannot in general find a $\rho_X$ as above. However, there are other objects that we can attach to $X$: we can either write a 2-dimensional \emph{pseudorepresentation}
\[ T_X\colon \Gal(\ovl\Q/\Q) \to\cO_X(X) \]
that interpolates the traces of the representations corresponding to the classical points, or patch all representations defined on small subdomains of $X$ to obtain a \emph{sheaf of $\Gal(\ovl\Q/\Q)$-representations}, i.e. a coherent $\cO_X$-module equipped with an $\cO_X$-linear action of $\Gal(\ovl\Q/\Q)$. 

While investigating how eigenforms vary $p$-adically, it is interesting to understand how the above picture looks after we ``reduce it'' modulo a power of $p$. 
When $X$ is a point, say $\Spm\Q_p$, it is well-known that for a continuous representation $\rho\colon\Gal(\ovl\Q/\Q)\to\GL(V)$, $V$ a $\Q_p$-vector space, one can always find a $\Gal(\ovl\Q/\Q)$-stable $\Z_p$-lattice $\cV$ in $V$, hence attach to $V$ the reduction $\cV\otimes_{\Z_p}\Z/p^n$ for every $n\ge 1$. Such a reduction depends on the choice of a lattice: even if $n=1$, it is uniquely determined only up to semisimplification. In the special case when $\rho=\rho_{f,p}$ is attached to an eigenform $f$, one can try to describe such a reduction in terms of the automorphic data attached to $f$, i.e. its weight and Hecke eigensystem. A somewhat easier task is that of looking at the \emph{local} Galois representation $\rho_{f,p}\vert_{G_{\Q_p}}$, after choosing a decomposition group $G_{\Q_p}$. If $f$ is not $p$-new, then the isomorphism class of $\rho_{f,p}\vert_{G_{\Q_p}}$ can be described in terms of the weight $k$ and the $U_p$-eigenvalue $a_p$ of $f$. The most interesting case is that when $f$ is not ordinary at $p$, i.e. when its $U_p$-eigenvalue $a_p$ has positive valuation, so that $\rho_{f,p}\vert_{G_{\Q_p}}$ is an irreducible representation $V_{k,a_p}$.


More generally, to any $k\ge 2$ and $a_p\in\Qp$ of positive valuation, we can attach a 2-dimensional, irreducible crystalline representation $V_{k,a_p}$, and all 2-dimensional, irreducible crystalline representations of $\Gal(\Qp/\Q_p)$ are of this form, up to a twist with a crystalline character. We refer to the introduction of \cite{berloccon} for a summary of the work of various people towards determining the semisimplified mod $p$ reduction of $V_{k,a_p}$ in terms of $k$ and $a_p$.

We concern ourselves here with a different point of view: we would like to understand how the $V_{k,a_p}$ and their reductions vary when $k$ and $a_p$ are allowed to vary $p$-adically. In this direction, we recall the following result. We let $L$ be a $p$-adic field with valuation ring $\cO_L$. We write $\fm_L$ for the maximal ideal of $\cO_L$, $\pi_L$ for a uniformizer of $L$, and $e$ for the ramification index of $L/\Q_p$. Set $\alpha(k-1)=\sum_{n\ge 1}\lfloor\frac{k-1}{p^{n-1}(p-1)}\rfloor$.

\begin{thm}[{\cite[Theorem 1.1]{tortired}}]\label{introred}
Let $k$ be an integer at least 2, $a_{p,0}, a_{p,1}\in\fm_L$, and $n\in\Z_{\geq 1}$. If $v_p(a_{p,1}-a_{p,0})>2v_p(a_{p,0})+\alpha(k-1)+en$, then there exist $\Gal(\Qp/\Q_p)$-stable lattices $\cV_0$ and $\cV_1$ in $V_{k,a_{p,0}}$ and $V_{k,a_{p,1}}$, respectively, and an isomorphism
\[ \cV_0\otimes_{\cO_L}\cO_L/\pi_L^n\cong\cV_1\otimes_{\cO_L}\cO_L/\pi_L^n\]
of $\cO_L/\pi_L^n[\Gal(\Qp/\Q_p)]$-modules.
\end{thm}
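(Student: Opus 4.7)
My plan is to interpolate the family $\{V_{k,a_p}\}_{a_p}$ into a rigid analytic Galois representation over a suitable open disk $X$ centred at $a_{p,0}$, apply a lattice-existence result in the spirit of the main theorems of this paper to obtain a $\Gal(\Qp/\Q_p)$-stable lattice over a Berthelot formal model of $X$, and then extract the two specializations $\cV_0$ and $\cV_1$ from it. The radius of $X$, and the precision modulo $\pi_L^n$ at which the reduction is constant, will be dictated by the hypothesis on $v_p(a_{p,1}-a_{p,0})$.

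The first step is the interpolation itself. I would build a Wach module $N_X$ of rank two over the relative ring $\cO_X(X)\,\widehat{\otimes}\,\cO_L[[\pi]]$ following the explicit recipe of Berger--Li--Zhu: one fixes a basis $(e_1,e_2)$, declares the matrix of $\varphi$ to be congruent modulo $\pi$ to $\left(\begin{smallmatrix}0 & -1 \\ p^{k-1} & a_p\end{smallmatrix}\right)$ and the matrix of a topological generator $\gamma$ of $\Gamma$ to be trivial mod $\pi$, and then solves the commutation relation $\gamma\varphi = \varphi\gamma$ iteratively. The iteration requires inverting the parameter $a_p$, which is where $2v_p(a_{p,0})$ enters (the recursion generates a factor of $a_p^2$ at each step and $a_p$ must remain a unit on all of $X$), and requires inverting the cyclotomic element $q=\varphi(\pi)/\pi$ to the precision encoded by $\alpha(k-1)$. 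The natural disk on which the recursion converges integrally is therefore
\[ X = \{a_p : v_p(a_p - a_{p,0}) > 2v_p(a_{p,0}) + \alpha(k-1)\}. \]

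From $N_X$ one recovers a Galois representation $\bV_X = (N_X \otimes \bB^+)^{\varphi=1}$, free of rank two over $\cO_X(X)$. Since $X$ is a wide open disk, the lattice-existence results of the present paper apply to yield an $\cO_{\cX}(\cX)$-linear model $\cV_{\cX}\subset\bV_X$ for an appropriate formal model $\cX$ of $X$. Specializing at $a_{p,0}$ and $a_{p,1}$ produces $\cV_0\subset V_{k,a_{p,0}}$ and $\cV_1\subset V_{k,a_{p,1}}$; their agreement modulo $\pi_L^n$ follows once one observes that the two evaluation homomorphisms $\cO_{\cX}(\cX)\to\cO_L/\pi_L^n$ coincide whenever $v_p(a_{p,1}-a_{p,0})$ exceeds the radius of $X$ by $en$, in which case $\cV_0/\pi_L^n$ and $\cV_1/\pi_L^n$ are both obtained from $\cV_{\cX}/\pi_L^n$ via the same ring map.

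The main obstacle I anticipate lies in the explicit integrality analysis of the Wach recursion: one must check that every matrix entry produced by iteratively solving $\gamma\varphi=\varphi\gamma$ lies in the integral subring cut out by the formal model $\cX$, and that the bound one obtains is exactly $2v_p(a_{p,0})+\alpha(k-1)$ rather than something weaker. This is a delicate combinatorial estimate in which the ``denominators coming from $a_p$'' and the ``denominators coming from $q$'' must be tracked separately, and in which one must verify that no further loss of integrality occurs when passing from $N_X$ to $\bV_X$ via the functor $(-\otimes\bB^+)^{\varphi=1}$. Once this bookkeeping is in place, the conclusion is essentially formal.
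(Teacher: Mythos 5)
Theorem \ref{introred} is cited from \cite{tortired}; the paper does not re-prove it, but Section \ref{seccris} (Proposition \ref{appcr}, Corollary \ref{cryscor}, Remark \ref{tortirem}) re-derives and sharpens it with the machinery developed here, and your route is genuinely different from that one. Rather than building a relative Wach module over the disc and tracking integral denominators---which is essentially the engine behind Torti's original proof, and which you correctly flag as the delicate part---the paper treats the $m=1$ case (constancy of $\ovl\rho_{k,a_p}$ on the wide open disc of valuation radius $2v_p(a_{p,0})+\alpha(k-1)$, due to Berger, Berger--Li--Zhu, and Torti) as a black box, parametrises the $V_{k,a_p}$ inside the trianguline variety $S_2^\square$ via the explicit non-critical triangulation, and invokes Lemma \ref{existsGrep} (resting on Hellmann's theorem for tubes) to produce the sheaf of $G_{\Q_p}$-representations on the disc with no $(\varphi,\Gamma)$-module bookkeeping at all. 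The residue-neighbourhood formalism of Section \ref{secUV} then delivers the mod $p^n$ statement. What this buys: it reuses the existing $p$-adic analysis, it sidesteps the Kedlaya--Liu freeness issue entirely, and it gives the sharper threshold of Corollary \ref{cryscor}(i) (loss of $n-1$) rather than the $en$ you recover.

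Two points in your proposal need repair. First, the step ``$\bV_X=(N_X\otimes\bB^+)^{\varphi=1}$ is free of rank two over $\cO_X(X)$'' is not automatic when $X$ is a wide open: $\cO_X(X)$ is then a Fr\'echet but not a Banach $\Q_p$-algebra, so it falls outside the framework of families of $(\varphi,\Gamma)$-modules over local coefficient algebras. This is precisely the obstruction discussed in Section \ref{klsec} and Example \ref{exlog}; you would need to justify it as the paper does, via Hellmann's theorem on the tube of a point, or else carry out the Wach-module version of that argument. Second, invoking Theorem \ref{wideopenlatt} requires $\bV$ to be absolutely irreducible \emph{and} residually multiplicity-free. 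The former follows from $a_{p,0}\in\fm_L$ (the non-ordinary case), but the latter is a genuine hypothesis on $\ovl\rho_{k,a_{p,0}}$ that your argument does not mention; Proposition \ref{appcr} builds it in explicitly, and without it the lattice-gluing argument in the proof of Theorem \ref{wideopenlatt} breaks down. Finally, your threshold ``exceeds the radius of $X$ by $en$'' is not what the residue-neighbourhood calculus produces: from Definition \ref{resdef} and Example \ref{exdisc}, pointwise constancy mod $p^n$ holds on $U^{(n)}_{a_{p,0},\cU}$, which shrinks the wide open by a factor of $p^{(n-1)/e}$, not $p^{en}$, so the exponent you write down should be checked against Lemma \ref{funcon} rather than taken from the original Torti bound.
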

Apart from the explicit radius, the idea behind Theorem \ref{introred} is that the modulo $\pi_L^n$ reduction of $V_{k,a_p}$ does not change if we let $k$ and $a_p$ vary in sufficiently small $p$-adic neighborhoods of their respective ambient spaces. As it is apparent from the above discussion, such a notion of ``local constancy'' of the modulo $p^n$ reduction depends on a suitable choice of lattices for the representations involved. 
In the paper \cite{tortilatt}, the second-named author shows that this is a general phenomenon: if $X$ is a rigid analytic space and $\bV$ a sheaf of $G$-representations over $X$, then for every $n\ge 1$ one can find a (not admissible, in general) affinoid covering $\fU$ of $X$ such that, for every $U\in\fU$, the restriction $\bV\vert_{U}$ is \emph{constant modulo $p^n$}, roughly in the following sense: given any two specializations $x$ and $y$ of $U$, the fibers $\bV_x$ and $\bV_y$ admit lattices $\cV_x$ and $\cV_y$ that give isomorphic representations of $G$ after reduction modulo $p^n$. One has to take some care in making this notion precise when allowing $x$ and $y$ to be defined over arbitrary $p$-adic fields. The key to finding the two lattices is that one can actually find a lattice over the whole of $U$, in the sense of the following definition. Let $X$ be a rigid analytic space over $L$, and let $\bV$ be a $G$-representation over $X$. 

\begin{defin}
A \emph{sheaf of lattices} for $\bV$ is the datum of a formal model $\cX$ of $X$ and a sheaf $\cV$ of $G$-representations over $X$ whose generic fiber, equipped with the action of $G$ induced from $\cV$, is isomorphic to $\bV$. We say that such a $\cV$ is a \emph{lattice} for $\bV$ if $\cV$ is associated to a finite free $\cO_\cX(\cX)$-module equipped with an $\cO_\cX(\cX)$-linear action of $G$.
\end{defin}

If $\cV$ is a lattice in $\bV$, defined over some formal model $\cX$ of $X$, then given any rig-point of $x$, in the sense of \cite[Section 7.1.10]{DJ95}, one can attach to it the fiber of $\cV$ at $x$, a representation of $G$ over a finite free $\cO$-module, $\cO$ the valuation ring of a $p$-adic field.

In the current paper, we deal with two main questions. The first one is under what conditions, given a profinite group $G$, a rigid analytic space $X$ over a $p$-adic field $L$ and a continuous representation
\[ \rho_X\colon G\to\GL_2(\cO_X(X)), \]
we can find a lattice for $\rho_X$. 
This is known to be a difficult problem:  
even over an affinoid $X$ and without taking into account the action of $G$, it is not possible in general to find a lattice in a coherent sheaf: see a counterexample of Hellmann in \cite[Section 3]{Hel16}. The obstacle is that in general the first cohomology group of an integral coherent sheaf does not vanish. 
However, by a result of Chenevier \cite[Lemme 3.18]{chendens}, we can always find a \emph{sheaf of lattices} for $\rho_X$, i.e. a coherent, locally free sheaf $\cV$ over a formal model $\cX$ of $X$, equipped with a $G$-action inducing $\rho_X$. The locally free sheaf $\cV$ is not a priori free, i.e. the local lattices might not glue.
The following key proposition allows us to work with an actual lattice under reasonable conditions on the underlying space $X$ and the representation $\rho_X$. 

\begin{prop}[cf. Proposition \ref{prop:factorial}]\label{intro1}
Assume that $X$ is strictly quasi-Stein, i.e. a strictly increasing union of affinoids such that the restriction maps are compact and dense. Let $\bV$ be an absolutely irreducible, residually multiplicity-free $G$-representation on $X$ of rank $d<p$. If $\bV$ is not residually absolutely irreducible, assume that $\cO_X^+(X)$ is a UFD. Then $\bV$ on $X$ admits a lattice over the formal model $\Spf\cO_X^+(X)$.
\end{prop}

Proposition \ref{prop:factorial} is a simple consequence of results of Nyssen, Rouquier, and Bella\"iche--Chenevier. The condition on $\cO_X^+(X)$ being a UFD is satisfied for instance if $X$ is a wide open disc (see Example \ref{exdisc}) or more generally if $X$ is regular (on top of being strictly quasi-Stein). We briefly discuss this in Remark \ref{rem:regular}. This will be enough for our applications to representations of arithmetic interest.

Strictly quasi-Stein spaces are a slightly special case of ``wide open'' domains (for which the density condition is not imposed), and include all generic fibers of formal spectra of profinite rings, such as wide open discs (see the discussion in \Cref{quasisteinrep}). 
Wide open domains appear in \cite{bellchen} under the terminology ``nested'', and variations of this notion are referred to as ``overconvergent domains'' in the literature. We expect that the assumption that $X$ is strictly quasi-Stein in Theorem \ref{intro1} cannot be made much weaker, since a biproduct of the proof is the fact that a sheaf of $G$-representations on $X$ is a $G$-representation, which holds on quasi-Stein spaces by a classical result of Kiehl (see Remark \ref{quasioptimal} for a comment on this).

We remark that the situation is much simpler if we start with a continuous \emph{pseudorepresentation} $T\colon G\to\cO_X(X)$: by topological reasons, the image of $T$ lies in the subring $\cO_X^+(X)$ of power-bounded elements in $\cO_X(X)$, and one can write such a ring as $\cO_\cX(\cX)$ for a formal model $\cX$ of $X$, under reasonable assumptions on $X$ (see Lemma \ref{Tform}).

The problem we treat is related to some questions about families of $(\varphi,\Gamma)$-modules over rigid spaces. Berger and Colmez \cite{bercol} attach a family of \'etale $(\varphi,\Gamma)$-modules to a sheaf of $G_{\Q_p}$-representations over a rigid (or adic) space $X$, in a functorial way, and Kedlaya--Liu \cite{kedliufam} and Hellmann \cite{hellfam} study the essential image of such a functor, i.e. which families of \'etale $(\varphi,\Gamma)$-modules over $X$ can be traced back to a sheaf of $G_{\Q_p}$-representations. Kedlaya and Liu observe that the obstruction to doing so is given by a residual condition: the mod $p$ representation attached to $\rho_X$ is necessarily constant along $X$ whenever $\rho_X$ is attached to a sheaf of $G_{\Q_p}$-representation. One way they circumvent this obstruction is by restricting themselves to the case where $S$ is a \emph{local coefficient algebra}, i.e. a ring of the form $S = R\otimes_{\Z_p}\Q_p$, where $R$ is a local Noetherian $\Z_p$-algebra. If $X$ is a wide open, then the above construction produces a sheaf of $G_{\Q_p}$-representations over it, but care must be taken since $\cO_X(X)$ is not itself a local coefficient algebra, despite $\cO_X^+(X)$ being local: if $X$ is written as an increasing union of affinoids $\Spm(A_i)$ for $i\ge 0$, with compact restriction maps $A_{i+1}\to A_i$, then the ring $S=\cO_X(X)$ is the projective limit of the Banach algebras $A_i$, hence a Fr\'echet $\Q_p$-algebra that is not a Banach $\Q_p$-algebra. We refer to Example \ref{exlog} for a standard example of this situation, and to the discussion around it for a more detailed description of how our work is related with the results of \cite{kedliufam}.


Our second question, and motivation for the first one, is to study the reduction of a $G$-representation $\rho_X$ as above modulo a power of $p$. More precisely, for every point $x$ of $X$ and integer $n\ge 1$, we would like to identify, as explicitly as possible, a neighborhood of $x$ over which the modulo $p^n$ reduction is constant. We give two notions of constancy. 
Unsurprisingly, both notions depend on the choice of a lattice for $\rho_X$. 
For every finite extension $E$ of $L$ of ramification index $e_{E/L}$, we define the natural number $\gamma_{E/L}=(n-1)e_{E/L}+1$: it is the smallest exponent $m$ for which the injection $\cO_L\into\cO_E$ induces an injection $\cO_L/\pi_L^n\into\cO_E/\pi_E^m$, and it is always at least $n$.

\begin{defin}[cf. Definition \ref{lcformdef}]
We say that a sheaf of $G$-representations $\cV$ over a formal scheme $\cX$ is:
\begin{enumerate}[label=(\roman*)]
	\item \emph{pointwise constant mod $p^n$} if, for every finite extension $E$ of $L$, with uniformizer $\pi_E$, the isomorphism class of $\cV_{E,x}/\pi_E^{\gamma_{E/L}(n)}$ as an $\cO_E/\pi_E^{\gamma_{E/L}(n)}[G]$-module is independent of the choice of an $E$-rig-point $x$ of $\cX_E$;
	\item \emph{constant mod $\pi_L^n$} if there exists a finite, free $\cO_L/\pi_L^n$-module $V^{(n)}$, equipped with an $\cO_L/\pi_L^n$-linear action of $G$, such that $\bV/\pi_L^n\cong V^{(n)}\otimes_{\cO_L/\pi_L^n}\cO_\cX/\pi_L^n\cO_\cX$.
\end{enumerate}
We say that a $G$-representation $\bV$ over a rigid analytic space $X$ has one of the above properties if it admits a lattice $\cV$ which does.
\end{defin}

The first condition corresponds to that studied by Torti \cite{tortilatt} in the context of rigid analytic families, and earlier by Taix\'{e}s i Ventosa--Wiese \cite{taiwie} and Chen--Kiming--Wiese \cite{ckwmod} for classical eigenforms. In these papers, the congruence is formulated in terms of congruence over a certain ``integral closure'' $\ovl{\Z/p^n\Z}$ of $\Z/p^n\Z$. The choice of $\gamma_{E/L}(n)$ is such that constancy mod $\pi_L^n$ lies (strictly) between pointwise constancy mod $p^n$ and $p^{n+1}$, as we explain in Section \ref{gammasec}. 

Our result concerning constancy modulo $p^n$ is based on a very simple remark, that is better illustrated via an example. Let $f$ be a power-bounded function on the affinoid rigid unit disc $D(0,1)$, i.e. an element of $\Z_p\langle T\rangle$. We can restrict $f$ to a function over an affinoid disc $D(0,p^{-1})$ of radius $p^{-1}$, which amounts to seeing $f$ as an element of $\Z_p[[T/p]]$: clearly, we can write $f=f(0)+pg$ for some $g\in\Z_p[[T/p]]$, so that reducing $f$ modulo $p$ over $D(0,p^{-1})$ gives back the constant $f(0)$. However, this already holds over the wide open unit disc $D^\circ(0,1)$, i.e. the affinoid unit disc minus the boundary annulus: indeed, if $E$ is any $p$-adic field, evaluating $f$ at an $E$-point of $D^\circ(0,1)$ assigns to $T$ an element of positive valuation in $E$. In particular, reducing modulo the maximal ideal of the valuation ring of $E$ gives back once more $f(0)$. Contrary to what happens over $D(0,p^{-1})$, $f$ itself is not a ``constant'' modulo $p$; reducing $f\in\Z_p\langle T\rangle$ modulo $p\Z_p\langle T\rangle$ can produce an arbitrary element of $\F_p[T]$.

We generalize this construction by replacing $D(0,1)$ with a rigid analytic space over $L$ of the form $\cX^\rig$ for an affine formal $\cO_L$-scheme $\cX$, the origin $x=0$ with an arbitrary $x\in X(L)$, and defining for every such $x$ and every $n\ge 1$ an explicit rational domain neighborhood $V^{(n)}_x$, and an explicit wide open neighborhood $U^{(n)}_x$ containing $V^{(n)}_x$, that play the role of $D(0,p^{-1})$ and $D^\circ(0,1)$ in the above picture for $n=1$. Such neighborhoods depend on the choice of a formal model $\cX$ of $X$; we refer to Definition \ref{resdef}. For $n=1$, one recovers the ``tube'' $U^{(1)}_x$ of $x$ in the sense of Berthelot. 
We prove the following.

\begin{thm}[cf. Theorem \ref{Unconst}]\label{intro2}
	Let $X$ be a rigid analytic space over $L$, and $\bV$ a $G$-representation on $X$ admitting a free lattice defined over an affine formal model $\cX$ of $X$. Then, for every $x\in X(L)$ and $n\in\Z_{\ge 1}$, $\bV$ is pointwise constant modulo $p^n$ over $U_{x,\cX}^{(n)}$, and constant modulo $\pi_L^n$ over $V_{x,\cX}^{(n)}$.
\end{thm}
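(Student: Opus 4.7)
The plan is to reduce both statements to a single elementary decomposition of the matrix entries of the $G$-action. Write $\cX=\Spf A$, so that the free lattice corresponds to a continuous representation $\rho\colon G\to\GL_d(A)$ for some $d$. A rig-point $x\in X(L)$ yields a continuous $\cO_L$-algebra homomorphism $\epsilon_x\colon A\to\cO_L$; since $A$ is topologically of finite type over the discrete valuation ring $\cO_L$, its kernel $\fm_x$ is finitely generated, and we pick generators $f_1,\dots,f_r$. Set $\rho_x=\epsilon_x\circ\rho\colon G\to\GL_d(\cO_L)$. Using the splitting of $\epsilon_x$ provided by the structural map $\cO_L\to A$, every entry of $\rho(g)$ decomposes as its image under $\epsilon_x$ plus an element of $\fm_x$, so that
\[ \rho(g)=\rho_x(g)+\sum_{k=1}^r f_k\,\Mat_k(g)\qquad\text{for some }\Mat_k(g)\in \Mat_d(A). \]

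For the statement over $V_{x,\cX}^{(n)}$, I expect to use that its affine formal model $\Spf B$ is obtained by $\pi_L$-adically completing the $A$-algebra $A[T_1,\dots,T_r]/(\pi_L^n T_k-f_k)_k$, in which each $f_k=\pi_L^n T_k$ lies in $\pi_L^nB$. The displayed decomposition then gives $\rho\vert_{V_{x,\cX}^{(n)}}(g)\equiv\rho_x(g)\pmod{\pi_L^n}$ in $\Mat_d(B)$ for every $g\in G$. Setting $V^{(n)}=(\cO_L/\pi_L^n)^d$ with the $G$-action induced by $\rho_x\bmod\pi_L^n$ yields an isomorphism of the pulled-back lattice modulo $\pi_L^n$ with $V^{(n)}\otimes_{\cO_L/\pi_L^n}B/\pi_L^n$, which is exactly constancy mod $\pi_L^n$.

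For the statement over $U_{x,\cX}^{(n)}$, the key input is the valuation condition defining rig-points of the wide-open tube: if $y$ is an $E$-rig-point of $U_{x,\cX}^{(n)}$ for some finite extension $E/L$, then $v_E(f_k(y))>(n-1)e_{E/L}$, so that $f_k(y)\in\pi_E^{\gamma_{E/L}(n)}\cO_E$. Applying $\epsilon_y$ to the decomposition displayed above produces, for every $g\in G$,
\[ \rho_y(g)\equiv\rho_x(g)\pmod{\pi_E^{\gamma_{E/L}(n)}}\qquad\text{in }\Mat_d(\cO_E). \]
Hence the fiber $\cV_y/\pi_E^{\gamma_{E/L}(n)}$, as an $\cO_E/\pi_E^{\gamma_{E/L}(n)}[G]$-module, is the base change of $\rho_x\bmod\pi_E^{\gamma_{E/L}(n)}$ from $\cO_L$ to $\cO_E$ and, in particular, is independent of $y$. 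This is pointwise constancy mod $p^n$.

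The main obstacle is geometric rather than algebraic: one must extract from Definition \ref{resdef} the precise formal-model description of $V_{x,\cX}^{(n)}$ and the sharp valuation condition characterising rig-points of $U_{x,\cX}^{(n)}$. Once these facts are in hand, both parts of the theorem follow by evaluating a single, finitely generated identity at the appropriate $\cO_L$-algebra homomorphism, with the exponent $\gamma_{E/L}(n)$ appearing naturally from the strict inequality $v_E(f_k(y))>(n-1)e_{E/L}$.
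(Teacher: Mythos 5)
Your proposal is correct and follows essentially the same path as the paper. The paper factors the argument through a function-level lemma (Lemma~\ref{funcon}, then Lemma~\ref{gammalatt}): for $g=f-f(x)\in I_x$, the strict inequality $\lvert g(y)\rvert<p^{(1-n)/e}$ defining $U_{x,\cX}^{(n)}$ combined with the discreteness of $v_E$ gives $g(y)\in\pi_E^{\gamma_{E/L}(n)}\cO_E$, while over $V_{x,\cX}^{(n)}$ the identity $\cB_x^{(n)}=\cA\langle\pi_L^{-n}f_1,\ldots,\pi_L^{-n}f_m\rangle$ of Lemma~\ref{Anwo} exhibits $g\in\pi_L^n\cB_x^{(n)}$; Theorem~\ref{Unconst} is then applied entrywise to $\rho_\cV$. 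You inline exactly this decomposition, entry by entry, so the two arguments coincide in substance.

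One small imprecision: you justify finite generation of $\fm_x=\ker\epsilon_x$ by saying $A$ is ``topologically of finite type over $\cO_L$,'' which need not hold for $A\in\Ad_{\cO_L}$ (e.g.\ $\cO_L[[T]]$). Finite generation follows simply from the Noetherianity of $A$, which is part of the definition of $\Ad_{\cO_L}$; this is also how the paper argues. Also, $\Spf B$ is a formal \emph{model} of $V_{x,\cX}^{(n)}$ rather than $V_{x,\cX}^{(n)}$ itself, but your use of it is correct. Neither point affects the argument.
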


Under certain conditions on $\bV$, the domains in Theorem \ref{intro2} are optimal, i.e. one cannot have (pointwise) constancy mod $p^n$ on any larger neighborhood of $x$. We discuss this in Section \ref{optimalsec}.
%
If moreover one assumes $X$ to be strictly quasi-Stein, the condition about the existence of a free lattice is always verified under the assumptions of  Theorem \ref{intro1}. 

Hellmann \cite[Theorem 1.2]{hellfam} proves that a family of étale $(\varphi,\Gamma)$-modules over the tube of a point in a rigid space can always be converted into a $G_{\Q_p}$-representation. Since the obstruction to doing so is generally attached to the mod $p$ representation not being constant along the family, his result is compatible with ours for $n=1$. We actually rely on \emph{loc. cit.} in proving Corollary \ref{woloc}.

In Section \ref{secstcri}, we apply the above work to some examples of arithmetic interest, notably families of 2-dimensional crystalline and semistable representations of fixed weight of $\Gal(\Qp/\Q_p)$. We hope that our explicit results might help investigate ``deep'' congruences between classical eigenforms, i.e. congruences that are not explained by $p$-adic deformation along the eigencurve (for instance, congruences between eigenforms of the same weight).


As mentioned above, 2-dimensional, irreducible crystalline representations of fixed weight $k\ge 2$ are parameterized by elements of the wide open unit disc $D^\circ(0,1)$ over $\Qp$. If $a\in D^\circ(0,1)(\Qp)$ and $U_a$ is a disc centered at $a$, of a sufficiently small explicit radius, then results of Berger, Berger--Li--Zhu and Torti \cite{berloccon,berlizhu,tortired}, i.e. the case $m=1$ of Theorem \ref{introred} above, give that the residual representation $\ovl\rho_{k,a}$ is constant along $U_a$. 
We equip $U_a$ with a $G$-representation and apply Theorem \ref{intro1} and Theorem \ref{intro2} to it. 
In order to do so, we borrow an idea from \cite{berloccon} and \cite{tortilatt} and map them to the deformation space of \emph{trianguline} representations, as constructed by Colmez and Chenevier. Such a deformation space parameterizes trianguline $(\varphi,\Gamma)$-modules, and in dimension 2 it contains a dense open subspace corresponding to étale ones. Via arguments of Breuil--Hellmann--Schraen, one can identify a subspace $S_{2,\ovl\rho}^\sq$ of the étale locus along which the residual representation is a fixed $\ovl\rho$. Since $S_{2,\ovl\rho}^\sq$ is equipped with a map to the framed deformation space of $\ovl\rho$, one can pull back the universal framed deformation to obtain a $G$-representation on $S_{2,\ovl\rho}^\sq$, and on $U_a$ via the map $U_a\to S_{2,\ovl\rho}^\sq$ provided by the universal property. 


Even though we stick to working over discs $U_a$ as above for simplicity, we could study local constancy of crystalline representations more precisely by relying on the work of Rozenzstajn. In \cite{rozcrys}, the author studies the locus $X(k,\ovl\rho)$ of $a\in D^\circ(0,1)(\Qp)$ where the semisimplification of $\ovl\rho_{k,a}$ is constant equal to a fixed $\ovl\rho$. She proves that $X(k,\ovl\rho)$ is a \emph{standard subset} of $D^\circ(0,1)$, i.e. a wide open disc with a finite number of closed discs removed from it (it is, in particular, a wide open), and studies its \emph{complexity}, a quantity that encodes information on how many discs are involved in the definition of $X(k,\ovl\rho)$, how large their fields of definition are, and the Hilbert--Samuel multiplicity of their rings of power-bounded functions, which in turn can be bounded in terms of the Serre weights of $\ovl\rho$ via the Breuil--Mézard conjecture. We could apply our Theorem \ref{intro1} to the wide open $X(k,\ovl\rho)$, and define as in Theorem \ref{intro2} subdomains on which the mod $p^n$ representations is (pointwise) constant. Since the data determining the complexity of $X(k,\ovl\rho)$ plays a role in the definition of such subdomains, it would be interesting to see if the combination of the results of \emph{loc. cit.} with ours has any interesting consequences. We plan to come back to this point in future work.

We work in a similar way in the semistable case, replacing $U_a$ with a constant-$\ovl\rho$ disc $U_\cL$ around an $\cL$-invariant $\cL\in\bP^1(\Qp)$. In this case, we only know of an explicit radius for $U_\cL$ in the case $\cL=\infty$, thanks to the work of Bergdall--Levin--Liu \cite{berlevliu}. 

We refer to the body of the paper for the complete statement of our results in the crystalline (Proposition \ref{appcr}, that refines a theorem of Torti \cite{tortired}, see Remarks \ref{tortirem} and \ref{rember}) and semistable cases (Proposition \ref{appst}, Theorem \ref{bll}). As an example of the results that we obtain, we give the following consequence of our work and \cite{berlevliu}, that makes \cite[Proposition 1.4]{tortilatt} explicit.

\begin{thm}[cf. Theorem \ref{bll}]\label{intro3}
Assume that $k\ge 4$ and $p\ne 2$. 
Then, if $\cL\in\bP^{1,\rig}(L)$ satisfies
\[ v_p(\cL)<2-\frac{k}{2}-v_p((k-2)!)+1-n, \]
there exist lattices $\cV_\cL$ and $\cV_\infty$ in $V_{k,\cL}$ and $V_{k,\infty}$, respectively, such that $\cV_\cL\otimes_{\cO_L}\cO_L/\pi_L^{\gamma_{L/\Q_p}(n)}\cong\cV_\infty\otimes_{\cO_L}\cO_L/\pi_L^{\gamma_{L/\Q_p}(n)}$ as $\cO_L/\pi_L^{\gamma_{L/\Q_p}(n)}[\Gal(\Qp/\Q_p)]$-modules.
\end{thm}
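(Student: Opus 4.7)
The plan is to reduce the statement to a direct application of Theorems~\ref{intro1} and~\ref{intro2} by bringing the family $\{V_{k,\cL}\}$ of semistable $G_{\Q_p}$-representations into the framework of those theorems, with Bergdall--Levin--Liu providing the input radius. Concretely, I would first invoke Bergdall--Levin--Liu to produce a wide open neighborhood $U_\infty\subset\bP^{1,\rig}_L$ of $\cL=\infty$ on which the residual representation $\ovl\rho_{k,\cL}$ is constant equal to $\ovl\rho_{k,\infty}$; the assumptions $k\ge 4$ and $p\ne 2$ ensure that $\ovl\rho_{k,\infty}$ is absolutely irreducible, and the outer radius of $U_\infty$ matches the $n=1$ instance of the theorem's inequality.

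Following the strategy sketched in the introduction for the crystalline case, and adapted to the semistable case in Proposition~\ref{appst}, I would then build a continuous $G_{\Q_p}$-representation
\[ \rho_{U_\infty}\colon G_{\Q_p}\to\GL_2(\cO_{U_\infty}(U_\infty)) \]
specializing at every $\cL\in U_\infty(L)$ to $V_{k,\cL}$, by mapping $U_\infty$ into the framed trianguline deformation space $S_{2,\ovl\rho_{k,\infty}}^\sq$ and pulling back the universal trianguline deformation along its étale locus. Because $\rho_{U_\infty}$ is residually absolutely irreducible and in particular residually multiplicity-free, Theorem~\ref{intro1} applies and produces a free lattice $\cV$ for $\rho_{U_\infty}$ over the formal model $\cX=\Spf\cO_{U_\infty}^+(U_\infty)$.

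Applying Theorem~\ref{intro2} at the rig-point $x=\infty\in U_\infty(L)$ then yields that $\cV$ is constant mod $\pi_L^n$ on the affinoid subdomain $V_{\infty,\cX}^{(n)}\subset U_\infty$; specializing the resulting isomorphism of $\cO_L/\pi_L^n[G_{\Q_p}]$-modules at the two rig-points $\cL$ and $\infty$, and using the canonical injection $\cO_L/\pi_L^n\into\cO_E/\pi_E^{\gamma_{E/L}(n)}$ built into the definition of $\gamma_{L/\Q_p}(n)$, produces free lattices $\cV_\cL$ and $\cV_\infty$ in $V_{k,\cL}$ and $V_{k,\infty}$ whose reductions mod $\pi_L^{\gamma_{L/\Q_p}(n)}$ are isomorphic as Galois modules, which is the desired conclusion.

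The main obstacle is the final step: the explicit identification of $V_{\infty,\cX}^{(n)}$ with the locus cut out by $v_p(\cL)<2-\tfrac{k}{2}-v_p((k-2)!)+n-1$. This requires carefully unwinding Definition~\ref{resdef} for the formal model $\cX$ coming from the wide open disc of BLL radius around $\infty$, matching the $n$-shifted closed sub-disc that Definition~\ref{resdef} carves out against the target inequality, and keeping track of the coordinate change at $\infty$ and of the BLL normalization of the $\cL$-invariant; once this bookkeeping is done, the entire argument is a clean application of the two main theorems of the paper together with the already established semistable variant of the trianguline deformation construction.
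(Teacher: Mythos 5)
Your overall architecture is right and closely mirrors the paper's: invoke Bergdall--Levin--Liu to get a wide open disc $U$ around $\infty$ on which the residual representation is constant, push $U$ into the trianguline variety to obtain a family of $G_{\Q_p}$-representations, then apply the lattice theorem and the residue-disc constancy theorem. That is precisely what Proposition~\ref{appst} (via Lemma~\ref{existsGrep}) packages, and the paper's proof of Theorem~\ref{bll} is exactly the one-line citation of Proposition~\ref{appst} plus the observation that the BLL disc is $\Q_p$-rational.

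However, your final step uses the wrong half of Theorem~\ref{intro2}, and this is not a cosmetic slip. You apply ``constant mod $\pi_L^n$ on the affinoid $V_{\infty,\cX}^{(n)}$'', but the statement of Theorem~\ref{bll} has a \emph{strict} inequality on $v_p(\cL)$ and a congruence modulo $\pi_L^{\gamma_{L/\Q_p}(n)}$ with $\gamma_{L/\Q_p}(n)=(n-1)e_{L/\Q_p}+1$. Both features are the signature of \emph{pointwise} constancy mod $p^n$ over the \emph{wide open} subdomain $U_{\infty,\cX}^{(n)}$ (Definition~\ref{lcformdef}(i)), not of constancy mod $\pi_L^n$ over $V_{\infty,\cX}^{(n)}$. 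Using the affinoid $V_{\infty,\cX}^{(n)}$ leaves out the boundary annulus of $U_{\infty,\cX}^{(n)}$, so your argument would prove the theorem only for a proper subset of the $\cL$ in its hypothesis. Moreover, your handling of the exponent $\gamma_{L/\Q_p}(n)$ runs backwards: you try to ``upgrade'' a mod $\pi_L^n$ isomorphism to a mod $\pi_L^{\gamma_{L/\Q_p}(n)}$ one via the injection $\cO_L/\pi_L^n\hookrightarrow\cO_E/\pi_E^{\gamma_{E/L}(n)}$, but $\gamma_{L/\Q_p}(n)\ge n$, so that would be deducing a stronger congruence from a weaker one. The correct mechanism is the one the paper records: the formal model of the BLL disc lives over $\Z_p$, and the exponent $\gamma_{L/\Q_p}(n)$ appears automatically when you evaluate the mod $p^n$ pointwise-constancy statement at an $L$-rig-point of that $\Z_p$-formal scheme.

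There is also a subtler gap in your step 2. You propose to ``map $U_\infty$ into $S_{2,\ovl\rho}^\sq$ and pull back'' to obtain a $G_{\Q_p}$-representation $\rho_{U_\infty}$ with matrix coefficients in $\cO_{U_\infty}(U_\infty)$. But $\psi_{\ovl\rho}\colon S_{2,\ovl\rho}^\sq\to S_2$ is a $\GL_2$-torsor over the admissible locus, so lifting the natural map $U_\infty\to S_2$ requires a global trivialization of the associated sheaf on $U_\infty$, which is not given a priori. The paper does not construct such a lift in Lemma~\ref{existsGrep}; it instead notes $U$ lies in the admissible locus by Hellmann's theorem, obtains a \emph{sheaf} of $G_{\Q_p}$-representations there, and then applies Corollary~\ref{woloc}, whose proof of part (i) passes to the $\GL_d$-torsor of trivializations precisely to sidestep the missing global trivialization. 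If you want to keep your version, you need to justify why the sheaf is already free on the wide open disc; otherwise you should follow the paper's torsor descent instead.
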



When $p$ is small, we apply a result of Chenevier to deduce from our work on representations of $\Gal(\Qp/\Q_p)$ some explicit local constancy results for representations of $\Gal(\ovl\Q/\Q)$ that are crystalline or semistable at $p$ (see Corollaries \ref{crysglob}, \ref{stglob}).

Finally, we give some simple consequences of our results for pseudorepresentations along rigid analytic spaces, that are not necessarily obtained as traces of sheaves of $G$-representations. We give an application to the mod $p^n$ variation of the pseudorepresentation carried by the Coleman--Mazur eigencurve (Proposition \ref{ecurvelc}). Our result is not very explicit unless one has strong information about the geometry of an irreducible component of the eigencurve; if such a component is ordinary (i.e. the generic fiber of a Hida family), one can make the result a little bit more concrete, as in the following corollary. Note that this is again a result about global Galois representations. 

\begin{cor}[cf. Corollary \ref{corhida}]\label{intro4}
Let $X$ be an ordinary component of a Coleman--Mazur eigencurve, and let $\kappa\in\cW(\Qp)$ be a weight with the property that $X$ has a single point $x$ of weight $\kappa$. Then, for $n$ sufficiently large, and for every $n\ge 1$ if $X$ is étale over $\cW$, the pseudorepresentation $T$ of $\Gal(\ovl\Q/\Q)$ carried by $X$ is pointwise constant mod $p^n$ over $\omega_X^{-1}(D^\circ(\kappa,p^{1-n}))$, and constant mod $p^n$ over $\omega_X^{-1}(D(\kappa,p^{-n}))$. \\
In particular, if $L$ is a $p$-adic field, the Hecke eigensystems away from $p$ of any two overconvergent eigenforms attached to $L$-points of $X$ of weight in $D^\circ(\kappa,p^{1-n})$ are congruent mod $\pi_L^n$.
\end{cor}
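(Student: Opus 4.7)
The plan is to specialize Proposition \ref{ecurvelc} to the ordinary setting by using Hida theory to describe explicitly the formal model and the weight map. By Hida's construction, an ordinary component $X$ of the Coleman--Mazur eigencurve is the rigid analytic generic fiber of $\Spf R$ for a reduced, Noetherian, finite and flat algebra $R$ over the Iwasawa algebra $\Lambda$ of the relevant component of $\cW$. In particular $R=\cO_X^+(X)$, so by Proposition \ref{Tform} the pseudorepresentation $T$ is already defined over the formal model $\cX:=\Spf R$, and the weight map $\omega_X$ arises from a finite morphism of formal schemes $\omega_X^\form\colon\cX\to\cW^\form$.

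I then apply Proposition \ref{ecurvelc} to $(X,T,x)$, producing an affinoid neighborhood $V^{(n)}_{x,\cX}$ and a wide open $U^{(n)}_{x,\cX}$ of $x$ over which $T$ is constant, respectively pointwise constant, modulo $p^n$. The main technical step is identifying these intrinsic neighborhoods with preimages under $\omega_X$ of the weight discs centered at $\kappa$, namely
\[ V^{(n)}_{x,\cX}=\omega_X^{-1}(D(\kappa,p^{-n})), \qquad U^{(n)}_{x,\cX}=\omega_X^{-1}(D^\circ(\kappa,p^{1-n})). \]
The single-point-above-$\kappa$ hypothesis ensures that the completion $\wht R_{\fm_x}$ is a finite $\wht\Lambda_{\fm_\kappa}$-algebra with a unique maximal ideal, so that the chains of ideals $\{\fm_\kappa^k R+\pi_L^n R\}_k$ and $\{\fm_x^k+\pi_L^n R\}_k$ are cofinal. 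In the étale case $\wht R_{\fm_x}\cong\wht\Lambda_{\fm_\kappa}$ and these equalities follow on the nose for every $n\ge 1$ directly from Definition \ref{resdef}. In general one needs $n$ large enough to absorb the ramification of $\omega_X$ at $x$, which is the source of the ``for $n$ sufficiently large'' caveat.

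Finally, the Hecke-eigensystem consequence follows from the definition of pointwise constancy: for $\ell\nmid Np$ the $T_\ell$-eigenvalue of an overconvergent eigenform attached to an $L$-point $y\in X$ equals $T(\Frob_\ell)(y)$, and pointwise constancy of $T$ modulo $p^n$ at $L$-points yields the claimed congruence modulo $\pi_L^n$, using that the exponent $\gamma_{L/L'}(n)$ appearing in the definition is always at least $n$. The principal obstacle is the geometric identification in the second paragraph: the single-point-above-$\kappa$ hypothesis is essential, since without it the preimage of a small weight disc would split into several connected components, forcing one to work with the component containing $x$ in place of the intrinsic tube, at the cost of losing the clean radii $p^{-n}$ and $p^{1-n}$ coming from the structure of weight space.
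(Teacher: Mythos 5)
Your proposal follows the paper's route: the paper proves Corollary \ref{corhida} exactly by invoking Hida theory to see $X$ as the generic fibre of $\Spf R$ with $R$ finite flat over $\Lambda$, so that one may take $r=1$ (and $r'=1$) in Proposition \ref{ecurvelc}, and the ``$n\ge 1$ if \'etale'' refinement is the content of the remark following \ref{corhida}. Two small caveats. First, a terminology slip: it is Proposition \ref{Tconst} that produces the intrinsic $\cX$-residue neighborhoods $U_{x,\cX}^{(n)}$, $V_{x,\cX}^{(n)}$; Proposition \ref{ecurvelc} already combines this with Lemma \ref{XYn} and delivers the conclusion directly in terms of $\omega_X^{-1}$ of weight discs, so the argument you describe is really a re-derivation of \ref{ecurvelc} rather than an application of it. Second, your justification of the key identification $U_{x,\cX}^{(n)}=\omega_X^{-1}(D^\circ(\kappa,p^{1-n}))$ via cofinality of the filtrations $\{\fm_\kappa^kR+\pi_L^nR\}_k$ and $\{\fm_x^k+\pi_L^nR\}_k$ is weaker than what is claimed: cofinality only says the two filtrations induce the same topology, whereas the statement demands an equality of subdomains at a \emph{fixed} level $n$ (for $n\ge n_0$). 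The paper's Lemma \ref{XYn} instead uses the integral equation that an element of $I_x$ satisfies over $\cO_L[f^\ast I_y]$ to control the valuation quantitatively; you should replace the cofinality heuristic by this valuation estimate if you want to recover the precise radii $p^{1-n}$ and $p^{-n}$.
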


As Theorems \ref{intro1} and \ref{intro2} are independent of the choice of profinite group $G$ and of the rank of the $G$-representation $\bV$, one could hope to apply them to situations of arithmetic significance that go beyond the 2-dimensional, $G=\Gal(\Qp/\Q_p)$ case. Obstacles to doing so would be that the parameter spaces of fixed weight crystalline and semistable representations of higher dimension do not have as simple a description as in the 2-dimensional case, and that there is as yet no explicit description of constant $\ovl\rho$ families of such representations. 

We also remark that one might be able to make some constructions and statements more uniform and general by working with the category of adic spaces, that encompasses both formal schemes and their Berthelot generic fibers. This is the approach that Hellmann follows in \cite{hellfam}. We did not do so here, as the classical setup seemed to make the results more immediate to grasp, and we did not see a clear advantage in working with adic spaces in the applications. 

\smallskip

\noindent\textbf{Acknowledgments.} The second author wishes to thank Alexander Rahm and Gaetan Bisson at the Universit\'e de la Polyn\'esie fran\c{c}aise for the support. 
We both thank Alexandre Maksoud and Anna Medvedovsky for some interesting discussions around the results of this paper, and the anonymous referee for their thorough reading and numerous comments.

\smallskip

\noindent\textbf{Notation and terminology.}
Given a category $\calC$, we will write $\calC^\op$ for its opposite category. \\
For any field $K$, we denote by $\ovl K$ an algebraic closure of $K$, and by $G_K=\Gal(\ovl K/K)$ the absolute Galois group of $K$. We write $\Mat_n(A)$ for the ring of $n\times n$ matrices over a ring $A$. \\
We work with rigid analytic space in the sense of Tate. 
For a rigid analytic space $X$ over a $p$-adic field $L$, we denote by $\cO_X$ the structure sheaf of $X$, and by $\cO_X^+$ the sheaf of power-bounded regular functions on $X$. 
We will typically denote formal schemes and sheaves on them by calligraphic letters (a sheaf $\cV$ on a formal scheme $\cX$), and rigid analytic spaces and sheaves of them by straight or bold letters (a sheaf $\bV$ on a rigid analytic space $X$). \\
We generally write $D(x,r)$ for an affinoid disc of some center $x$ and radius $r\in p^\Q$, and $D^\circ(x,r)$ for the corresponding ``wide open'' disc (i.e. $D(x,r)$ minus its boundary annulus). If $r=p^{-m}$ for some $m\in\Q$, we refer to $m$ as the \emph{valuation radius} of either $D(x,r)$ or $D^\circ(x,r)$.

\section{Preliminaries on formal schemes and rigid analytic spaces}\label{secrig}

Let $L$ be a $p$-adic field with ring of integers $\cO_L$ and residue field $k_L$. We denote by $\pi_L$ a uniformizer of $L$.
We will need at various points the categories whose objects are as follows and morphisms are the obvious ones:
\begin{itemize}
\item $\Art_{\cO_L}$: local, Artinian $\cO_L$-algebras with residue field $k_L$;
\item $\wht\Art_{\cO_L}$: the completion of $\Art_{\cO_L}$;
	\item $\Ad_{\cO_L}$: Noetherian, adic $\cO_L$-algebras with ideal of definition $I$, such that $A/I$ is a finitely generated $k_L$-algebra ($\mathrm{ff}$ stands for ``formally finite'').
	\item $\Aff_{\cO_L}$: affine formal schemes, i.e. of the form $\Spf\cA$ for $\cA\in\Ad_{\cO_L}$. To any such scheme we attach the \emph{special fiber} $\Spec\cA/I$, $I$ an ideal of definition of $\cA$.
	\item $\Frm_{\cO_L}$: locally Noetherian, adic formal schemes (i.e. formal schemes constructed by gluing objects in $\Aff^\form_{\cO_L}$) whose special fiber (constructed by gluing special fibers on an affine covering) is a scheme locally of finite type over $\Spec k_L$. 
	\item $\Aff_L$: affinoid $L$-algebras.
	\item $\Rig^\aff_L$: affinoid rigid analytic spaces over $L$, i.e. of the form $\Spm A$ for $A\in\Aff_L$.
	\item $\Rig_L$: rigid analytic spaces over $L$.
\end{itemize}
We recall that an $\cO_L$-algebra $\cA$ is an object of $\Ad_{\cO_L}$ if and only if it is a quotient of $\cO_L\langle\zeta_1,\ldots,\zeta_r\rangle[[\xi_1,\ldots,\xi_s]]$ for some $r,s\in\Z_{\ge 0}$. 
Taking the formal (respectively, maximal) spectrum gives an equivalence of categories $\Aff^\form_{\cO_L}\cong(\Ad_{\cO_L})^\op$ (respectively, $\Aff^\rig_L\cong\Aff_L^\op$), while $\Frm_{\cO_L}$ and $\Rig_L$ are obtained respectively from $\Aff^\form_{\cO_L}$ and $\Aff^\rig_L$ by gluing.  
%
%
Berthelot's construction, as presented in \cite[Sections 7.1.1 and 7.2.3]{DJ95}, provides us with a \emph{generic fiber} functor $\Frm_{\cO_L}\to\Rig_L, \cX\mapsto\cX^\rig$ 
that factors through the localization of $\Frm_{\cO_L}$ at admissible formal blow-ups. 

Given a rigid analytic space $X$ over $L$, a \emph{formal model} of $X$ is an object $\cX$ of $\Frm_{\cO_L}$ equipped with an isomorphism $\cX^\rig\cong X$. For any such $\cX$, one defines a canonical homomorphism
\begin{equation}\label{dejong} \mathcal{O}_\mathcal{X}(\cX)\otimes_{\cO_L}L\to\cO_X(X), \end{equation}
as in \cite[Section 7.1.8]{DJ95}. The image of $\cO_\cX(\cX)$ under \eqref{dejong} lies in $\cO_X^+(X)$, and the resulting map $\cO_\cX(\cX)\to\cO_X^+(X)$ is an isomorphism if $\cX$ is flat and normal \cite[Theorem 7.4.1]{DJ95}. 

\subsection{Wide open and strictly quasi-Stein spaces}\label{quasisteinrep}

We prefer Berthelot's generic fiber functor over Raynaud's since it allows us to work with ``wide open'' rigid analytic spaces, as in the definition just below. However, there is no simple description of the image of Berthelot's functor, i.e. no simple condition under which a rigid analytic space admits a formal model. 

\begin{defin}\label{defwo}
We say that a rigid analytic space $X$ over $L$ is \emph{wide open} if it is reduced and can be admissibly covered by an increasing union of affinoids $\bigcup_{i\in\N}X_i$ with the property that the restriction map $\cO_{X_{i+1}}(X_{i+1})\to\cO_{X_i}(X_i)$ is compact for every $i\ge 0$. 
\end{defin}



If $X$ is a wide open rigid analytic space and $(X_i)_i$ is a sequence as in Definition \ref{defwo}, then $\cO_X(X)$ is the projective limit of the affinoid Tate algebras $\cO_{X_i}(X_i)$ with respect to the restriction maps, and $\cO_X^+(X)$ is the projective limit of the $\cO_{X_i}^+(X_i)$ also with respect to restrictions. It is in particular Hausdorff, since each of the affinoid Tate algebras $\cO_{X_i}^+(X_i)$ is Hausdorff. 


\begin{rem}\label{wideopenfact}
	If $X$ is wide open, then by \cite[Lemma 7.2.11]{bellchen} (where the term ``nested'' is used instead of wide open), $\cO_X^+(X)$ is a compact ring. Being compact and Hausdorff, $\cO_X^+(X)$ is a profinite ring. 
\end{rem}

We denote by $\Alg^\prof_{\cO_L}$ the full subcategory of profinite objects in $\Ad_{\cO_L}$, and by $\Aff^\prof_{\cO_L}$ the category of affine formal schemes of the form $\Spf\cA$, with $\cA\in\Alg^\prof_{\cO_L}$.

\begin{rem}\label{woiff}
An object of $\Ad_{\cO_L}$ is profinite if and only if it belongs to $\wht\Art_{\cO_L}$, if and only if it is equipped with the topology induced by its maximal ideal, if and only if it is a quotient of $\cO_L[[\xi_1,\ldots,\xi_s]]$ for some $s\in\Z_{\ge 1}$.
Let $\cX=\Spf\cA$ for some $\cA\in\Ad_{\cO_L}$. Its generic fiber is:
\begin{itemize}
\item affinoid if and only if $(\pi_L)$ is an ideal of definition of $\cA$ (i.e., $\cA$ is a quotient of $\cO_L\langle\zeta_1,\ldots,\zeta_r\rangle$ for some $r\in\Z_{\ge 1}$), in which case $\cX^\rig=\Spm\cA\into\cA\otimes_{\cO_L}L$.

\item wide open if and only if $\cA\in\Alg^\prof_{\cO_L}$. 
We already explained the ``only if''. For the converse, we take $A=\cA$ in the construction of \cite[Section 7.1.1]{DJ95}, so that the generic fiber $(\Spf\cA)^\rig$ is given in \cite[Definition 7.1.3]{DJ95} as an increasing admissible union $\bigcup_{i\in\N}\Spm C_i$. A direct check shows that if $\cA$ is a quotient of $\cO_L[[\xi_1,\ldots,\xi_s]]$, then the maps $C_{i+1}\to C_i$ are compact.
%
\end{itemize}
\end{rem}

Let $e$ be the ramification index of $L/\Q_p$ and $\pi_L$ a uniformizer. Let $R$ be a positive real number of the form $p^{m/e}$ for some $m\in\Z$.
Basic examples for the two cases appearing in Remark \ref{woiff} are:
\begin{itemize}
\item The rigid analytic affinoid disc $D(0,R)$ of center 0 and radius $R$, with its formal model 
\[ \cD(0,R)=\Spf\cO_L\langle T,U\rangle/(T-\pi_L^mU), \]
If $R=1$, such a model is initial in a sense that we make precise in Remark \ref{maxmodel}. 
We can also define $D(0,R)$ over $\Q_p$ as $D(0,R)_{\Q_p}=\Spm\Q_p\langle T,U\rangle/(T^e-p^mU)$, 
with the formal model $\cD(0,R)_{\Q_p}=\Spf\Z_p\langle T,U\rangle/(T^e-p^mU)$. 
\item The rigid analytic wide open disc $D(0,R)^\circ$ of center 0 and radius $R$, defined as $(\Spf\cO_L[[\pi_L^mT]])^\rig$. 
It is a wide open, since it can be written as an increasing union of affinoid discs of radii $p^{r}$, with $r\in\Q$ and $r\to m/e$. We can define all such discs over $\Q_p$ as in the previous point.
\end{itemize}

\noindent We introduce a slightly stronger property than wide openness.

\begin{defin}[{cf. \cite[Definition 2.1.17]{emeloc}}]
A rigid analytic space $X$ over $L$ is said to be \emph{quasi-Stein} if it is reduced and can be admissibly covered by an increasing union of affinoids $\bigcup_{i\in\N}X_i$ with the property that the restriction map $\cO_{X_{i+1}}(X_{i+1})\to\cO_{X_i}(X_i)$ is of dense image for every $i$. We say that $X$ is \emph{strictly quasi-Stein} if one can choose the $X_i$ in such a way that $\cO_{X_{i+1}}(X_{i+1})\to\cO_{X_i}(X_i)$ is moreover compact for every $i$. 
\end{defin}

In other words, $X$ is strictly quasi-Stein if it is wide open and, moreover, one can take the restriction maps in Definition \ref{defwo} to be of dense image. Note that the strictness condition excludes affinoid spaces, which are a standard example of quasi-Stein spaces. 



\begin{rem}\label{rem:kiehl}
By \cite[Satz 2.4]{kiehlAB}, if $\bV$ is a coherent sheaf on a quasi-Stein rigid space $X$, then the cohomology groups $H^i(X,\bV)$ vanish for $i\ge 1$.
In particular, every sheaf of $G$-representations on a quasi-Stein space $X$ (see \Cref{def:Gsheaf} below) is a $G$-representation. This will allow us to apply our results to certain simple rigid analytic spaces, carrying sheaves of crystalline or semistable representations, without worrying whether such sheaves are free and associated (as in the proof of Corollary \ref{woloc}). 
\end{rem}

If $\cA\in\Alg^\prof_{\cO_L}$, then $(\Spf\cA)^\rig$ is strictly quasi-Stein: indeed, the transition maps in the construction of \cite[Section 7.1.1]{DJ95} are immediately seen to have dense image. In particular, wide open discs are strictly quasi-Stein. We prove a partial converse below. 
We say that a rigid space or formal scheme $X$ is integral if it is reduced and irreducible. 

\begin{prop}\label{converse}
Assume that $X$ is integral and strictly quasi-Stein. There is an isomorphism of rigid $L$-spaces $X\cong(\Spf\cO_X^+(X))^\rig$.
\end{prop}

\begin{proof}
Write $X=\bigcup_iX_i$ as an increasing union of integral affinoid subspaces such that the transition maps are compact and dense. Each $X_i$ admits $\Spf\cO_X^+(X_i)$ as a formal model. Since restriction to $X_i$ gives a map $\mathrm{res}_i:\cO_X^+(X)\to\cO_X^+(X_i)$, taking formal spectra and rigid generic fibers gives us a map $\iota_i:X_i\to(\Spf\cO_X^+(X))^\rig=:Y$. Since the maps $\mathrm{res}_i$ are compatible with the restrictions $\cO_X^+(X_{i+1})\to\cO_X^+(X_i)$, the $\iota_i$ are compatible with the inclusions $X_i\into X_{i+1}$, and we obtain a map 
\[ \iota:X=\bigcup_iX_i\to(\Spf\cO_X^+(X))^\rig. \]
We claim that the points of both sides are in bijection with the maximal ideals of $\cO_X^+(X)\otimes_{\Z_p}\Q_p$. For the right-hand side, this is \cite[Lemma 7.1.9]{DJ95}. For the left-hand side, let $\fm$ be a maximal ideal of $\cO_X^+(X)\otimes_{\Z_p}\Q_p$. Since $\cO_X^+(X)\otimes_{\Z_p}\Q_p$ is the ring of rigid analytic functions on $X$, it is the inverse limit of the $\cO_X(X_i)$. In particular, there exists $i$ such that the image $\fm_i$ of $\fm$ under $\cO_X^+(X)\otimes_{\Z_p}\Q_p\to\cO_X(X_i)$ is non-zero. The image of the resulting map $\cO_X^+(X)\otimes_{\Z_p}\Q_p/\fm\to\cO_X(X_i)/\fm_i$ is a subfield of $\cO_X(X_i)/\fm_i$, and is dense because of \cite[Satz 2.4.1]{kiehlAB}. 
In particular, $\cO_X(X_i)/\fm_i$ is itself a field, hence $\fm_i$ is maximal and is attached to a point of $X_i\subset X$. Conversely, if $\fm_i$ is a maximal ideal of $X_i$, the kernel of $\cO_X^+(X)\times_{\Z_p}\Q_p\to\cO_X(X_i)/\fm_i$ is a maximal ideal, since the map is non-zero by density of the image.

The above bijections are clearly compatible with $\iota$.
If $x$ is a point of $X$, the stalk $\cO_{X,x}$ is obtained by localizing $\cO_X^+(X)\otimes_{\Z_p}\Q_p$ at the maximal ideal $\fm$ attached $x$ and completing $p$-adically. Since the maximal ideal attached to $\iota(x)$ is again $\fm$, $\cO_{Y,\iota(x)}$ is obtained via the same operation and $\iota$ induces an isomorphism $\cO_{Y,\iota(x)}\cong\cO_{X,x}$. Therefore, $\iota$ is an open immersion that is bijective on points, hence an isomorphism.
\end{proof}

We denote by $\Rig^\wo_L$ the full subcategory of $\Rig_L$ whose objects are the integral strictly quasi-Stein rigid analytic spaces.




%


\begin{rem}\label{rem:DJ719}
Let $\cA\in\Ad_{\cO_L}$ be reduced, and $X=(\Spf\cA)^\rig$. For a finite extension $E/L$ and a point $x$ of $X$, the map $\cA\to\cO_X^+(X)\xto{x}\cO_E\onto k_E$ factors through a map $\ovl x:\cA/\pi_L\to k_E$. 
By \cite[Lemma 7.1.9]{DJ95}, the points of $X$ are in bijection with the maximal ideals of $\cA\otimes_{\Z_p}\Q_p$, which in turn are in bijection with the height 1 primes of $\cA$ not containing $\pi_L$. If $\fp_x$ is the prime ideal of $\cA$ corresponding to the point $x$, then the kernel of $\ovl x$ is the maximal ideal of $\cA/\pi_L$ generated by $\fp_L$. The following commutative diagram summarizes the various correspondences:
		\begin{center}\label{ovlx}
		\begin{tikzcd}
		\cA\otimes_{\Z_p}\Q_p\arrow[r,"x"] & \cO_X(X)\arrow[r,"x"] & E \\
			\cA\arrow[r]\arrow[d]\arrow[u] & \cO_X^+(X)\arrow[r, "x"]\arrow[u,hookrightarrow] & \cO_E\arrow[u,hookrightarrow]\arrow[d,twoheadrightarrow] \\
			\cA/\pi_L\arrow[rr,"\ovl x"] & \mbox{ } & k_E
		\end{tikzcd}
	\end{center}
\end{rem}

\begin{rem}\label{maxmodel}\mbox{ }
Let $\cA\in\Ad_{\cO_L}$, and $X=(\Spf\cA)^\rig$. Then $\cO_X^+(X)\in\Ad_{\cO_L}$ and $\Spf\cO_X^+(X)$ is an affine formal model of $X$. If $\cX$ is any other affine formal model of $X$, then \eqref{dejong} provides us with a map $\cO_\cX(\cX)\to\cO_X^+(X)$ in $\Ad_{\cO_L}$, hence a map $\Spf\cO_X^+(X)\to\cX$ in $\Aff^\form_{\cO_L}$ that induces an isomorphism on the generic fibers. Therefore, $\Spf\cO_X^+(X)$ is an initial object in the category of affine formal models of $X$.
\end{rem}

\subsection{Functors on Artin rings, formal schemes and rigid analytic spaces}\label{funrig}

Let $F\colon\Art_{\cO_L}\to\Sets$ be a functor. We can extend $F$ to a continuous functor $\wht F\colon\wht\Art_{\cO_L}\to\Sets$ in a unique way: if $\fm_A$ is the maximal ideal of $A\in\wht\Art_{\cO_L}$, we set $\wht F(A)=\varprojlim_n F(A/\fm_A^n)$, where the transition maps come from functoriality. Note that the Noetherian objects in $\Art_{\cO_L}$ belong to $\Ad_{\cO_L}$.

Assume that $F$ is pro-represented by a universal pair $(R,\xi)$, where $R$ is a Noetherian object in $\wht\Art_{\cO_L}$ and $\xi\in F(R)$. Since $R\in\Ad_{\cO_L}$, it gives rise to an affine formal scheme $\Spf R\in\Frm_{\cO_L}$ and a rigid analytic space $(\Spf R)^\rig\in\Rig_L$, so that we can consider the functors
\begin{gather*} F^\ff\coloneqq\Hom(R,-)\colon\Ad_{\cO_L}\to\Sets, \quad F^\form\coloneqq\Hom(-,\Spf R)\colon\Frm_{\cO_L}\to\Sets, \\
F^\rig\coloneqq\Hom(-,(\Spf R)^\rig)\colon\Rig_L\to\Sets. \end{gather*}
We attach to $\xi\in F(R)$ elements $\xi^\form\in F^\form(\Spf R)=\Hom(\Spf R,\Spf R)$ and $\xi^\rig\in F^\rig((\Spf R)^\rig)=\Hom((\Spf R)^\rig,(\Spf R)^\rig)$ in the obvious way. By definition, $(\Spf R,\xi^\form)$ is a universal pair for $F^\form$, and $((\Spf R)^\rig,\xi^\rig)$ is a universal pair for $F^\rig$.

The above definition are quite trivial, but allow us to define representable functors on some formal schemes and rigid spaces starting from a representable functor on Artin rings. It would be more satisfying to start with a (not necessarily) representable functors on Artin rings, and produce functors on formal schemes (by covering them with formal schemes of the type $\Spf\cA$, $\cA\in\Ad_{\cO_L}$) and rigid spaces (by covering them with rigid spaces of the form $(\Spf\cA)^\rig$, $\cA\in\Ad_{\cO_L}$, i.e. with wide open subspaces; this is what is sometimes called an ``overconvergent'' covering in the literature). However, we do not know of a result granting the existence of the required coverings in general. For the same reason, we are unable to check that the above (tautologically representable) functors actually parameterize (pseudo-)deformations of a given residual representation to all formal schemes or rigid spaces. It is however, sufficient to our purposes to know that they do so over certain subcategories of $\Frm_{\cO_L}$ and $\Rig_{\cO_L}$, namely the category of formal spectra of Noetherian pro-$p$ rings, and that of wide open rigid analytic spaces (see Propositions \ref{psdefequiv} and \ref{defequiv}). 

\subsection{Reminders on pseudorepresentations}

In order to simplify the exposition, we work with pseudorepresentations in the classical sense rather than the more modern notion of Chenevier's determinants. We will assume that the characteristic of our coefficient field is large enough compared to the dimension of the pseudorepresentation, in order to avoid the issues described in \cite[Introduction, footnote 5]{chendet}.

Let $d$ be a positive integer. 
Let $A$ be a topological ring in which $d!$ is invertible, and $R$ a topological $A$-algebra. We refer to \cite{taylorgal,chendet} for the definition of a  $d$-dimensional pseudorepresentation $T:R\to A$. 
%
%
%
If $G$ is a topological group, an \emph{$A$-valued pseudorepresentation of $G$} is a pseudorepresentation of the group algebra $A[G]$. We also refer to its restriction $T\colon G\to A$ as a pseudorepresentation. 
%
%
%
%
Given a pseudorepresentation $T\colon R\to A$ and an $A$-algebra $B$, we denote by $T\otimes_AB$ the pseudorepresentation $T\otimes_AB\to B$ induced from $T$. For later use, we prove a lemma.

%
%
%

We recall another definition. Let $A$ be a local ring with maximal ideal $\fm_A$ and residue field $\F$, equipped with the $\fm_A$-adic topology. Let $T\colon G\to A$ a continuous pseudorepresentation.

\begin{defin}[{\cite[Definition 1.4.1]{bellchen}}]
	We say that $T$ is \emph{residually multiplicity-free}, or that $\ovl T$ is multiplicity-free, if there exist pairwise non-isomorphic, absolutely irreducible representations $\ovl\rho_i\colon G\to\Mat_{d_i}(\F)$, $i=1,\ldots,k$, such that $\ovl T=\sum_{i=1}^k\tr\ovl\rho_i$.
\end{defin}



\section{Sheaves of $G$-representations}

Let $G$ be a compact topological group. 
In this section, we introduce the (standard) notion of sheaf of $G$-representations over a rigid analytic space $X$, and use a result of Bellaiche--Chenevier to find a ``lattice'' in it (i.e. an integral model) under some reasonable assumptions on the sheaf and on its underlying space (see Proposition \ref{prop:factorial}).

In the following definitions, $X$ can be either a rigid analytic space over $L$, or an admissible $\cO_L$-formal scheme. By ``open'' in $X$ we will mean either an admissible open if $X$ is a rigid space, or an open admissible formal subscheme if $X$ is a formal scheme.

\begin{defin}\label{def:Gsheaf}
	An $\cO_X$-linear action of $G$ on an $\cO_X$-module $\bV$ is the datum, for every open $U\subset X$, of a continuous homomorphism
	\[ \rho_U\colon G\to\Aut_{\cO_X(U)-\mathrm{mod}}(\bV(U)), \]
	such that, for every inclusion $V\subset U$ of opens in $X$, the composition of $\rho_{U}$ with the restriction $\bV(U)\to\bV(V)$ is conjugate to $\rho_{V}$ in $\Aut_{\cO_X(V)}\bV(V)$. 
	
	A \emph{sheaf of $G$-representations on $X$} is a locally free $\cO_X$-module $\bV$ of finite rank carrying an $\cO_X$-linear action of $G$. 
\end{defin}

Let $X$ be either a $\cO_L$-formal scheme or a rigid analytic space over $L$, and let $M$ be a free $\cO_X(X)$-module $M$ of finite rank equipped with a continuous homomorphism
\[ G\to\Aut_{\cO_X(X)}(M). \]
We attach to $M$ the sheaf $\bV_M$ of $G$-representations on $X$ obtained from the presheaf $U\mapsto M\otimes_{\cO_X(X)}\cO_X(U)$, equipped with the $G$-action on the first factor. 

\begin{defin}
	A \emph{$G$-representation on $X$} is a sheaf of $G$-representations on $X$ of the form $\bV_M$ for $M$ as above. Given such a $\bV=\bV_M$, we denote with $\rho_\bV:G\to\Aut_{\cO_X(X)}(M)$ the associated homomorphism. We sometimes refer to $\rho_\bV$ itself as a $G$-representation on $X$. \\
	Assume that $X$ is reduced and irreducible, so that $\Frac(\cO_X(X))$ is a field that we denote by $K_X$. We say that a $G$-representation $\bV$ on $X$ is \emph{irreducible} if $\rho_\bV\otimes_{\cO_X(X)}K_X$ is irreducible. We say that $\bV$ is \emph{absolutely irreducible} if $\rho_\bV\otimes_{\cO_X(X)}K_X^\sep$ is absolutely irreducible. 
\end{defin}

\begin{rem}\label{coverGrep}
	Let $\bV$ be a sheaf of $G$-representations on $X$. Then there exists a covering $\fU$ of $X$ such that, for every $U\in\fU$, $\bV\vert_U$ is a $G$-representation. Indeed, by definition, a sheaf of $G$-representations on $X$ is a locally free $\cO_X$-module $\bV$ equipped with a $G$-action, i.e. there exists a covering $\fU$ of $X$, which is \emph{admissible} in the case when $X$ is a rigid space, such that $\bV\vert_U$ is free as an $\cO_U$-module for every $U\in \fU$. By refining the covering $\fU$, one can assume that every $U\in\fU$ is affine/affinoid, so that $\bV\vert_U$ is a $G$-representation.
\end{rem}

If $X=\Spm A$ is an affinoid, a $G$-representation on $X$ is what is called an \emph{$A$-representation of $G$} in \cite{kedliufam}.

\begin{rem}
	It is immediate from the definition that a $G$-representation $\bV$ on $X$ is irreducible if and only if its only sub-$G$-representations are 0 and $\bV$. It is not obvious that an irreducible $\bV$ does not admit any non-trivial subsheaves of $G$-representations.
\end{rem}


For every sheaf of $G$-representations $\bV$ on $X$, there exists an affine (if $X$ is a formal scheme) or affinoid (if $X$ is a rigid analytic space) covering $\cU$ of $X$ with the property that, for every $U\in\cU$, the restriction of $\bV$ to $U$ is a $G$-representation on $U$.

\begin{defin}\label{deftrace}
	When $X$ is either an affine $\cO_L$-formal scheme or an affinoid rigid analytic space over $L$, we define the \emph{trace} of a $G$-representation $\bV$ on $X$ as the trace $\tr\bV\colon G\to\cO_X(X)$ of the homomorphism $\rho_\bV\colon G\to\Aut_{\cO_X(X)}(\bV(X))$ associated with $\bV$. \\ For an arbitrary $\cO_L$-formal scheme or rigid analytic $L$-space $X$ and a sheaf $\bV$ of $G$-representations on $X$, we define the trace of $\bV$ as the function $\tr\bV\colon G\to\cO_X(X)$ defined, on every open $U$ over which $\bV\vert_U$ is a $G$-representation, as the trace of $\bV\vert_U$.
\end{defin}

\begin{rem}\mbox{ }
	\begin{itemize}
		\item The second part of Definition \ref{deftrace} produces, as desired, a map $G\to\cO_X(X)$. Indeed, by Remark \ref{coverGrep}, there exists a covering $\fU$ of $X$, admissible in case $X$ is a rigid space, such that $\bV\vert_U$ is a $G$-representation on $U$; therefore, for every $g\in G$, the collection of traces $\tr(\bV\vert_U)(g)$, $U\in\fU$, glues to a global function on $X$. Here we are using in a crucial way the admissibility of $\fU$ in the rigid case: see Section \ref{klsec} for a further discussion on this.
		\item Given a rigid analytic space $X$ and a sheaf $\bV$ of $G$-representations on $X$, one can compute the trace of $\bV$ on any sheaf of lattices, in the following sense: if $\cV$ is a sheaf of lattices in $\bV$, defined over a formal model $\cX$ of $X$, then the trace of $\bV$ coincides with that of $\cV$ after composition with the natural map $\cO_\cX(\cX)\to\cO_X(X)$. 
	\end{itemize}
\end{rem}


\begin{lemma}\label{irredequiv}
	Let $\bV$ be a $G$-representation on $X$ and $T$ the trace of $\bV$. The following are equivalent:
	\begin{enumerate}
		\item\label{irred1} The $G$-representation $\bV$ is absolutely irreducible.
		\item\label{irred2} The pseudorepresentation $T\otimes_{\cO_X(X)}K_X^\sep$ is irreducible.
	\end{enumerate}
\end{lemma}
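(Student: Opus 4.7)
The plan is to reduce the statement to a classical fact about representations over an algebraically closed field. After tensoring along $\cO_X(X) \to K_X^{\sep}$, the representation $\rho_\bV$ becomes a continuous representation of $G$ on a finite-dimensional $K_X^{\sep}$-vector space whose trace is precisely $T \otimes_{\cO_X(X)} K_X^{\sep}$. Since $X$ lives over a $p$-adic field $L$, the field $F \coloneqq K_X^{\sep}$ has characteristic $0$ (so $d!$ is invertible for every $d$) and is algebraically closed. The lemma thus reduces to the assertion that a continuous representation $\rho \colon G \to \GL_n(F)$ is irreducible if and only if $\tr\rho \colon G \to F$ is an irreducible pseudorepresentation.

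For (\ref{irred2}) $\Rightarrow$ (\ref{irred1}) I would argue the contrapositive. If $\rho_\bV \otimes_{\cO_X(X)} F$ is reducible, I would pick a Jordan--H\"older filtration $0 = W_0 \subsetneq W_1 \subsetneq \cdots \subsetneq W_k = \bV(X) \otimes_{\cO_X(X)} F$ with $k \geq 2$ and irreducible subquotients $\tau_i = W_i/W_{i-1}$. Additivity of the trace on short exact sequences then yields $T \otimes_{\cO_X(X)} F = \sum_{i=1}^k \tr\tau_i$, and grouping the summands (say $\tr\tau_1$ against $\sum_{i \geq 2} \tr\tau_i$) exhibits $T \otimes_{\cO_X(X)} F$ as a sum of two pseudorepresentations of positive dimensions, contradicting irreducibility of the pseudorepresentation.

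For (\ref{irred1}) $\Rightarrow$ (\ref{irred2}), suppose $T \otimes_{\cO_X(X)} F = T_1 + T_2$ with each $T_i$ a pseudorepresentation of dimension $d_i \geq 1$. Invoking a theorem of Taylor (or Rouquier) over the algebraically closed characteristic-$0$ field $F$, I would realize each $T_i$ as the trace of a semisimple representation $\sigma_i \colon G \to \GL_{d_i}(F)$. Then $\rho_\bV \otimes_{\cO_X(X)} F$ and $\sigma_1 \oplus \sigma_2$ share the same trace, hence, by Brauer--Nesbitt over $F$, the same semisimplification. Since $\rho_\bV \otimes_{\cO_X(X)} F$ is irreducible and in particular equal to its own semisimplification, this forces $\rho_\bV \otimes_{\cO_X(X)} F \cong \sigma_1 \oplus \sigma_2$, contradicting irreducibility. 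The main obstacle here is invoking Taylor's existence theorem cleanly; the paper's standing hypothesis that $d!$ is invertible in the coefficient ring is exactly what makes this step available, and once the reduction to $F$ is in place the rest is a routine Brauer--Nesbitt argument.
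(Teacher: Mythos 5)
Your proposal is correct and follows essentially the same route as the paper: both directions hinge on passing to $K_X^{\sep}$, using additivity of trace (phrased as an extension in the paper, as a Jordan--H\"older filtration in yours) for one implication, and Taylor's realization theorem plus trace-determines-semisimplification for the other. The only cosmetic difference is that you cite Brauer--Nesbitt for the semisimplification step where the paper re-invokes Taylor's theorem; these are the same fact.
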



\begin{proof}
	We first prove that \ref{irred1}$\iff$\ref{irred2}. 
	By definition, $\bV$ is absolutely irreducible if and only if $\rho_\bV\otimes_{\cO_X(X)}K_X^\sep$ is irreducible. If $\rho_\bV\otimes_{\cO_X(X)}K_X^\sep$ is an extension of two non-zero $K_X^\sep$-linear representations $\rho_1$ and $\rho_2$ of $G$, then $T\otimes_{\cO_X(X)}K_X^\sep$ is the sum of the traces $T_1,T_2\colon G\to K_X^\sep$ of $\rho_1$ and $\rho_2$. This gives the implication \ref{irred2}$\implies$\ref{irred1}. 
	
	For the other direction, assume that $T\otimes_{\cO_X(X)}K_X^\sep$ is a sum of two non-trivial pseudorepresentations $T_1,T_2\colon G\to K_X^\sep$. By \cite[Theorem 1(2)]{taylorgal}, there exist $K_X^\sep$-linear representations $\rho_1$ and $\rho_2$ with traces $T_1$ and $T_2$, respectively. Since the $K_X^\sep$-linear representations $\rho_\bV\otimes_{\cO_X(X)}K_X^\sep$ and $\rho_1\oplus\rho_2$ share the same trace, their semisimplifications are isomorphic, again by \cite[Theorem 1(2)]{taylorgal}. In particular, $\rho_\bV\otimes_{\cO_X(X)}K_X^\sep$ is not absolutely irreducible. This proves by contradiction that \ref{irred1}$\implies$\ref{irred2}. 
	%
\end{proof}

The above lemma motivates the following definition.

\begin{defin}
	We say that a sheaf of $G$-representations $\bV$ on $X$, with associated pseudorepresentation $T$, is \emph{absolutely irreducible} if $T\otimes_{\cO_X(X)}K_X^\sep$ is irreducible.
\end{defin}

Every $G$-representation $\cV$ on an admissible formal $\cO_L$-scheme $\cX$ induces a $G$-representation on the generic fiber of $X$: replace the corresponding $\cO_\cX(\cX)$-module $\cM$ with its generic fiber $\cM\otimes_{\cO_\cX(\cX)}\cO_X(X)$, where the tensor product is taken via the natural map $\cO_\cX(\cX)\to\cO_X(X)$. We refer to the resulting representation as the generic fiber of $\cV$, and denote it by $\cV^\rig$. By performing this operation over a covering of $\cX$, one can attach to a sheaf of $G$-representations $\cV$ on $\cX$ a sheaf of $G$-representations on $X$, independent of the chosen covering, that we refer to as the generic fiber of the original sheaf and denote again by $\cV^\rig$.

\subsection{Sheaves of lattices}

Let $\bV$ be a sheaf of $G$-representations on $X$. 

\begin{defin}
	We say that $\bV$ \emph{admits a lattice} if there exists an affine formal model $\cX$ of $X$ and a $G$-representation $\cV$ on $\cX$ such that $\cV^\rig$ is isomorphic to $\bV$ as a $G$-representation on $X$.
	
	
	\noindent We say that $\bV$ \emph{admits a sheaf of lattices} if there exists a formal model $\cX$ of $X$ and a sheaf $\cV$ of $G$-representations on $\cX$ such that $\cV^\rig\cong\bV$ as a sheaf of $G$-representations on $X$.
	
	\noindent In both cases we say that the lattice, or sheaf of lattices, $\cV$ is \emph{defined over $\cX$}.
\end{defin}

Our definition of sheaf of lattices is essentially taken from \cite[Lemme 3.18]{chenappuni}. We also refer the reader to \cite[Definition 3.5]{hellfam} for a formulation in terms of adic spaces, and to \cite{tortilatt} where the same notion is referred to as an \emph{integral subfamily} of $\bV$.

Note that if $\bV$ admits a lattice, then $\bV$ is necessarily a $G$-representation on $X$ (not just a sheaf of $G$-representations). 

\begin{rem}\label{pullback}
	Let $f\colon X\to Y$ be a morphism of rigid analytic spaces over $L$. Let $\bV$ be a sheaf of $G$-representations on $Y$, and $\cV$ a sheaf of lattices in $\bV$ defined over a formal model $\cY$ of $Y$. The coherent sheaf $f^\ast\bV$, with the action of $G$ induced from $\bV$, is a sheaf of $G$-representations on $X$. If $Y$ is the generic fiber of $\cY$ in the sense of Raynaud, and $X$ also admits a formal model in the sense of Raynaud, then one can pull back $\cV$ to a lattice for $f^\ast\bV$ defined over some Raynaud formal model for $X$. However, in Berthelot's theory it is unclear whether one can always pull back $\cV$.
\end{rem}



%
%

\section{Deformations and pseudodeformations}

We specialize the discussion of Section \ref{funrig} to deformations functors of (pseudo-)representations of a profinite group.

\subsection{Deformation functors on Artin rings}

Until the end of the section, let $G$ be a profinite group that is $p$-finite in the sense of Mazur, and $\ovl\rho$ a continuous representation of $G$ on a $k_L$-vector space $V$ of some finite dimension $n$. 
Given an arbitrary ring $A$ and a representation $\rho$ of $G$ on a free $A$-module, we denote by $\tr\rho\colon G\to A$ the trace of $\rho$.


Consider the functor $\PDef_{\ovl\rho}\colon\Art_{\cO_L}\to\Sets$, attaching to $A\in\Art_{\cO_L}$ the set of pseudorepresentations $T\colon G\to A$ such that the reduction of $T$ modulo the maximal ideal of $A$ is the trace of $\ovl\rho$. By \cite[Proposition 2.3.1]{boeckledef}, $\PDef_{\ovl\rho}$ is pro-represented by a universal pair $(R_{\ovl\rho}^\ps,T^\univ)$ with $R_{\ovl\rho}^\ps$ Noetherian, that we call the \emph{universal pseudodeformation} of $\ovl\rho$. 

If $\ovl b$ is a basis of $V$, then we define a functor $\Def_{\ovl\rho}^\sq\colon\Art_{\cO_L}\to\Sets$ associating with $A\in\Art_{\cO_L}$ the set of triples $(r,\iota,b)$ where $r$ is a representation of $G_K$ over an $A$-module $V$, $\iota$ is a $G_K$-equivariant isomorphism $V\otimes_Ak_L\cong\ovl V$ and $b$ is a lift of $\ovl b$ to a basis of $V$. By Schlessinger's criterion, $\Def_{\ovl\rho}^\sq$ is pro-represented by a universal quadruple $(R_{\ovl\rho}^\sq,\iota^{\sq,\univ},b^\univ,\rho^{\sq,\univ})$ with $R_{\ovl\rho}^\sq$ Noetherian, that we call the \emph{universal framed deformation} of $\ovl\rho$. 
Since $\tr\rho^{\sq,\univ}$ is a pseudorepresentation lifting $\tr\ovl\rho$, the universal property of $(R_{\ovl\rho}^\ps,T^\univ)$ gives a morphism $R_{\ovl\rho}^\ps\to R_{\ovl\rho}^\sq$ in $\wht\Art_{\cO_L}$. 

Consider the functor $\Def_{\ovl\rho}\colon\Art_{\cO_L}\to\Sets$ associating with $A\in\Art_{\cO_L}$ the set of pairs $(r,\iota)$ where $r$ is a representation of $G_K$ over an $A$-module $V$ and $\iota$ is a $G_K$-equivariant isomorphism $V\otimes_Ak_L\cong\ovl V$. If $\End_{k_L[G]}(\ovl\rho)=k_L$, then by \cite[Proposition 1]{mazdef}, $\Def_{\ovl\rho}$ is pro-represented by a universal triple $(R_{\ovl\rho},\iota^{\univ},\rho^{\univ})$ with $R_{\ovl\rho}$ Noetherian, that we call the \emph{universal deformation} of $\ovl\rho$. In this case, the relevant universal properties give morphisms $R_{\ovl\rho}^\ps\to R_{\ovl\rho}\to R_{\ovl\rho}^\sq$ in $\wht\Art_{\cO_L}$ that commute with the already given $R_{\ovl\rho}^\ps\to R_{\ovl\rho}^\sq$. When $\ovl\rho$ is irreducible, the morphism $R_{\ovl\rho}^\ps\to R_{\ovl\rho}$ is an isomorphism \cite[Theorem 2.4.1]{boeckledef}.

%


\subsection{Pseudorepresentations lifting residual representations}

As before, let $L$ be a $p$-adic field with ring of integers $\cO_L$ and residue field $k_L$, and let $G$ be a profinite, $p$-finite group. 
Fix a continuous pseudorepresentation $\ovl T\colon G\to k_L$. 

Let $\cA\in\Ad_{\cO_L}$. The structure map $\cO_L\to\cA$ induces a map $k_L\to\cA/\pi_L$. Let $T:G\to\cA$ a continuous pseudorepresentation.

\begin{defin}
We say that $T$ lifts $\ovl T$ if $T\otimes_\cA\cA/\pi_L:G\to\cA/\pi_L$ coincides with $\ovl T\otimes_{k_L}\cA/\pi_L$.
\end{defin}


We recall that a rigid space $X$ is said to be \emph{quasi-paracompact} if there exists an admissible covering $\bigcup_i U_i$ of $X$ by quasi-compact open subspaces, such that every quasi-compact open subspace of $X$ only intersects finitely many $U_i$.

Let $X$ be a quasi-separated, quasi-paracompact, reduced rigid analytic space over $L$ and $T\colon G\to\cO_X(X)$ a continuous pseudorepresentation. 

\begin{defin}\label{def:lift}
	We say that $T$ \emph{lifts} $\ovl T$ if there exists a formal model $\cX$ of $X$ such that $T$ factors through a pseudorepresentation $T_\cX\colon G\to\cO_\cX(\cX)$, and an affine covering $\{\Spf\cA_i\}_{i\in I}$ of $X$ such that, for every $i\in I$, the restriction of $T_\cX$ to $G\to\cA_i$ lifts $\ovl T$.
\end{defin}

In the second part of the definition, we require the much stricter condition that a lattice with trace $T$ exists. This is necessary in order to compare certain deformation spaces in \Cref{sec:psdef}.

We show that one can check the first property above on points. Recall that we can evaluate an element of $\cO_\cX(\cX)$ at a point of $\cX^\rig$, via the map $\cO_\cX(\cX)\to\cO_X^+(X)$ of \eqref{dejong}. 

In the following, let $X$ be a rigid analytic space, and $T:G\to\cO_X(X)$ a continuous pseudorepresentation. 
\begin{lemma}\label{lem:factor}
$T$ factors through a continuous pseudorepresentation $T^+\colon G\to\cO_X^+(X)$.
\end{lemma}
\begin{proof}
For every $L$-point $x$ of $X$, the specialization of $T$ at $x$ is a continuous pseudorepresentation $T_x\colon G\to L$. Since $G$ is compact, the image of $T_x$ lands in $\cO_L$. Then for every $g\in G$ and $x\in X(L)$, the evaluation of $T(g)$ at $x$ is power-bounded, so that $T(g)$ itself is power-bounded.
\end{proof}

\begin{lemma}\label{Tform}\mbox{ }
If $X$ is flat, normal, and admits a flat formal model, then $T$ factors through $\cO_\cX(\cX)\to\cO_X(X)$ for a flat, normal formal model $\cX$. 
\end{lemma}

\begin{proof}
Let $\cX_0$ be a flat formal model of $X$. Let $\cX$ be the normalization of $\cX_0$, constructed as in the first few lines of \cite[Section 2.1]{conradirr}. Then by \cite[Theorem 2.1.3]{conradirr} $\cX^\rig\to X$ is a normalization of $X$, hence an isomorphism since $X$ is already normal. 

Since $\cX$ is flat and normal, the morphism \eqref{dejong} induces an isomorphism $\cO_\cX(\cX)\to\cO_X^+(X)$ by \cite[Theorem 7.4.1]{DJ95}. Therefore, $T$ factors through a pseudorepresentation $T^+\colon G\to\cO_\cX(\cX)$ by Lemma \ref{lem:factor}.
\end{proof}

\begin{prop}\label{liftallps}
	Assume that $X$ is flat, normal, and admits a flat formal model. 
	A pseudorepresentation $T\colon G\to\cO_X(X)$ lifts $\ovl T$ if and only if, for every $x\in X(\Qp)$, the specialization $T_x\colon G\to\Qp$ of $T$ at $x$ lifts $\ovl T$. 
\end{prop}

\begin{proof}
We first prove that, for a reduced $\cA\in\Ad_{\cO_L}$, a pseudorepresentation $T:G\to\cA$ lifts $\ovl T$ if and only if, for every point $x$ of $X=(\Spf\cA)^\rig$, the specialization of $T$ at $x$ lifts $\ovl T$. 
We use the notation of Remark \ref{rem:DJ719}. By diagram \eqref{ovlx}, the mod $\pi_E$-reduction of $T_x$ coincides with $\ovl x\circ T$, hence $T_x$ lifts $\ovl T$ for every $x$ if and only if $\ovl x\ccirc T=\ovl T\otimes_{k_L}k_E$ for every $x$.

If $T\otimes_\cA\cA/\pi_L=\ovl T\otimes_{k_L}\cA/\pi_L$, then $\ovl x\ccirc T=\ovl T\otimes_{k_L}k_E$ for every $x$. Conversely, if $\ovl x\ccirc T=\ovl T\otimes_{k_L}k_E$ for every $x$, then by the bijections from Remark \ref{rem:DJ719} the reduction of $T\otimes_\cA\cA/\pi_L$ modulo an arbitrary maximal ideal $\fm\subset\cA/\pi_L$ is $\ovl T\otimes \cA/(\pi_L,\fm)$. Since $\cA$ is reduced, the intersection of all of these maximal ideals is 0, hence $T\otimes_\cA\cA/\pi_L=\ovl T\otimes_{k_L}\cA/\pi_L$, as desired.

Now let $X$ be as in the statement, and let $\cX$ be a formal model of $X$ such that $T$ factors via $T_\cX:G\to\cO_\cX(\cX)$, as provided by Lemma \ref{Tform}. Let $\{\Spf\cA_i\}_{i\in I}$ a covering of $X$ by reduced affine formal schemes. If $T_x$ lifts $\ovl T$ for every point $x$ of $X$, then for every $i$ the restriction $T_i$ of $T_\cX$ to $\cA_i$ lifts $\ovl T$ by the result of the previous paragraph, hence $T$ lifts $\ovl T$.

On the other hand, if $T$ lifts $\ovl T$, then by definition there exist $\cX$ and $\{\Spf\cA_i\}_{i\in I}$ as above such that $T_i$ lifts $\ovl T$ for every $i$, and again by the result of the previous paragraph we deduce that $T_x$ lifts $\ovl T$ for every $x$.
\end{proof}

\begin{rem}\label{qsqp}\mbox{ }
\begin{itemize}
\item If $X$ is flat, quasi-separated and quasi-paracompact, then it admits a flat formal model $\cX_0\to\Spf\cO_L$ by the results of Raynaud and Gruson (see \cite[Corollary 5.10]{BLII}).
\item If $X\cong(\Spf\cA)^\rig$ for some $\cA\in\Ad_{\cO_L}$, then $\Spf\cO_X^+(X)$ is a flat formal model of $X$. 
\end{itemize}
\end{rem}



\subsubsection{The rigid pseudodeformation functor}\label{sec:psdef}

Let $F=\PDef_{\ovl\rho}\colon\Art_{\cO_L}\to\Sets$, and let $(R^\ps_{\ovl\rho},T^\univ)$ be the universal pair for $F$. Consider the functors
\begin{align*} \PDef^\prof_{\ovl\rho}\colon\Frm^\prof_{\cO_L}\to\Sets, \quad
\cX\to\{\text{pseudodeformations }T\colon G\to\cO_\cX(\cX)\text{ of }\tr\ovl\rho\}, \\
\PDef^\wo_{\ovl\rho}\colon\Rig^\wo_L\to\Sets, \quad 
	X\to\{\text{pseudodeformations }T\colon G\to\cO_X(X)\text{ of }\tr\ovl\rho\}.
\end{align*}


\begin{notation}
	We set $X_{\ovl\rho}^\ps=(\Spf R_{\ovl\rho}^\ps)^\rig$, and we still denote with $T^{\univ}$ the composition of the universal pseudodeformation $T^\univ$ of $\ovl\rho$ with $R_{\ovl\rho}^\ps\to\cO_{X_{\ovl\rho}^\ps}(X_{\ovl\rho}^\ps)$. 
\end{notation}

Note that if the category of strictly quasi-Stein rigid analytic $L$-spaces is replaced with that of $L$-affinoids, the following result is \cite[Theorem 2.2(iii)]{chenlec5}.

\begin{prop}\label{psdefequiv}
There are natural isomorphisms
\begin{align*} F^\form\vert_{\Frm^\prof_{\cO_L}}\cong \PDef^\prof_{\ovl\rho} \quad
	F^\rig\vert_{\Rig^\wo_L}\cong\PDef^\wo_{\ovl\rho}, \end{align*}
i.e. the functor on formal schemes (respectively, rigid spaces) obtained from $F$ is a pseudodeformation functor after restriction to the subcategory of formal spectra of Noetherian profinite $\cO_L$-algebras (respectively, strictly quasi-Stein rigid analytic $L$-spaces).
\end{prop}

\begin{proof}
By definition, an object $\cX\in\Frm^\prof$ is a formal scheme of the form $\Spf\cA$, with $\cA$ a Noetherian object of $\wht\Art_{\cO_L}$; in particular, the category $\Frm^\prof$ is a full subcategory of $(\wht\Art_{\cO_L})^\op$. Therefore, the first isomorphism follows from the fact that $\Hom(R_{\ovl\rho}^\ps,-)\cong F=\PDef_{\ovl\rho}$.

By definition $F^\rig=\Hom(-,(\Spf R^\ps_{\ovl\rho})^\rig)$, so that to every element of $F^\rig(X)$ we can attach a homomorphism $f\colon X\to (\Spf R^\ps_{\ovl\rho})^\rig$, and the continuous pseudorepresentation $f^\ast T^\univ\colon G\to\cO_X(X)$ lifting $\tr \ovl\rho$. This provides us with a natural transformation $F^\rig\to\PDef^\wo_{\ovl\rho}$. We construct a natural transformation in the opposite direction, omitting the routine check that it is an inverse to the one above. 
By Lemma \ref{Tform} and Remark \ref{qsqp}, every continuous representation $T\colon G\to\cO_X(X)$ lifting $\tr \ovl\rho$ factors through a continuous pseudorepresentation $T^+\colon G\to\cO_X^+(X)$ lifting $\tr \ovl\rho$. Since $\cO_X^+(X)$ is an element of $\wht\Art_{\cO_L}$ by Remark \ref{wideopenfact}, $T$ defines an element of $\Hom(R_{\ovl\rho}^\ps,\cO_X^+(X))$, hence an element of $\Hom(X,(\Spf R^\ps_{\ovl\rho})^\rig)=F^\rig(X)$, as desired.
\end{proof}

\subsection{Sheaves lifting residual representations}
As before, let $V$ be an $L$-vector space with a continuous $G$-action, $\ovl V=V\otimes_{\cO_L}k_L$, and $\ovl\rho\colon G\to\GL(\ovl V)$ an $n$-dimensional, continuous $k_L$-linear representation.

\begin{defin}
	We say that $V$ \emph{lifts} $\ovl\rho$ if there exists a $G$-stable $\cO_L$-lattice $\cV\subset V$ such that $\cV\otimes_{\cO_L}k_L$ is isomorphic to $\ovl\rho$ as a $k_L$-linear representation of $G$. 
\end{defin}

Contrary to a common convention, we are not taking any semisimplification after tensoring with $k_L$: we want a lattice that actually lifts the chosen, and possibly non-semisimple, $\ovl\rho$. Since a $k_L$-representations of $G$ lifted by $V$ is only determined up to semisimplification, a same $V$ can lift various non-isomorphic representations.

Let $\cX$ be an $\cO_L$-formal scheme and let $\cV$ be a sheaf of $G$-representations on $\cX$.

\begin{defin}\label{formlift}
	We say that $\cV$ \emph{lifts} $\ovl\rho$ if, for every point $x$ of $\cX$, corresponding to a morphism $\Spf k\to\cX$ for a finite extension $k$ of $k_L$, the fiber $\cV_x$ is isomorphic to $\ovl\rho\otimes_{k_L}k$ as a $G$-representation.
%
%
\end{defin}


Now let $X$ be a rigid analytic space over $L$ and $\bV$ a sheaf of $G$-representations on $X$. 

\begin{defin}
	We say that $\bV$ \emph{lifts} $\ovl\rho$ if there exist a formal model $\cX$ of $X$ and a sheaf of lattices $\cV$ for $\bV$ over $\cX$ such that $\cV$ lifts $\ovl\rho$.
\end{defin}

\subsubsection{The rigid deformation functor}

Assume that $\ovl\rho$ is absolutely irreducible, of dimension $d<p$. 
Consider the functor $F:=\Def_{\ovl\rho}\colon\Art_{\cO_L}\to\Sets$, and let $(R_{\ovl\rho},\rho^{\univ})$ be the universal pair for $F$. Define functors
\begin{align*} \Def^\prof_{\ovl\rho}\colon\Frm^\prof_{\cO_L}&\to\Sets, \quad
	\cX\to\{\text{$G$-representations }\cV\text{ over }\cX\text{ lifting }\ovl\rho\}, \\
	 \Def^\wo_{\ovl\rho}\colon\Rig^\wo_L&\to\Sets, \quad
	X\to\{\text{$G$-representations }\bV\text{ over }X\text{ lifting }\ovl\rho\}.
\end{align*}


Again, if the category of wide open rigid analytic $L$-spaces is replaced with that of $L$-affinoids, the following result is the absolutely irreducible case of \cite[Theorem 2.2(iii,iv)]{chenlec5}.

\begin{prop}\label{defequiv}
	There are natural isomorphisms
	\begin{align*} F^\form\vert_{\Frm^\prof_{\cO_L}}\cong \Def^\prof_{\ovl\rho}, \quad
		F^\rig\vert_{\Rig^\wo_L}\cong\Def^\wo_{\ovl\rho}, \end{align*}
	i.e. the functor on formal schemes (respectively, rigid spaces) obtained from $F$ is a deformation functor after restriction to the subcategory of formal spectra of Noetherian profinite $\cO_L$-algebras (respectively, wide open rigid analytic $L$-spaces).
\end{prop}

\begin{proof}
	Since deformations of $\ovl\rho$ to a Noetherian object $\cA$ of $\wht\Art_{\cO_L}$ are in an obvious natural bijection with deformations of $\ovl\rho$ over $\Spf\cA$, the first statement follows from the universal property of $R_{\ovl\rho}$.
	
	By definition $F^\rig=\Hom(-,(\Spf R_{\ovl\rho})^\rig)$, so that to every element of $F^\rig(X)$ we can attach a homomorphism $f\colon X\to (\Spf R_{\ovl\rho})^\rig$, and the $G$-representation $f^\ast\rho^\univ$ lifting $\ovl\rho$. This provides us with a natural transformation $F^\rig\to\Def^\wo_{\ovl\rho}$. We construct a natural transformation in the opposite direction, omitting the routine check that it is an inverse to the one above. 
	
	Let $\bV$ be a $G$-representation over $X$, lifting $\ovl\rho$. Since $\ovl\rho$ is absolutely irreducible, $\bV$ is also absolutely irreducible. In particular, $\bV$ admits a lattice $\cV$ defined over a formal model $\cX$ of $X$ by Proposition \ref{prop:factorial} (which is independent of the results of this section). Now $\cV$ is a deformation of $\ovl\rho$, hence by the universal property of $\Spf R_{\ovl\rho}$, it is induced by a map $g\colon\cX\to\Spf R_{\ovl\rho}$. The generic fiber of $g$ defines an element of $F^\rig(X)$, as desired.
\end{proof}

\begin{notation}
	We set $X_{\ovl\rho}=(\Spf R_{\ovl\rho})^\rig$ and $X_{\ovl\rho}=(\Spf R_{\ovl\rho})^\rig$. We still denote by $\rho^{\univ}$ (respectively, $\rho^{\sq,\univ}$) the composition of the universal deformation $\rho^\univ$ (respectively, the universal framed deformation $\rho^{\sq,\univ}$) of $\ovl\rho$ with $R_{\ovl\rho}\to\cO_{X_{\ovl\rho}}(X_{\ovl\rho})$ (respectively, $R_{\ovl\rho}^\sq\to\cO_{X_{\ovl\rho}^\sq}(X_{\ovl\rho}^\sq)$)
\end{notation}

Note that the sheaf $\bV_{\ovl\rho}^{\univ}$ is actually a $G$-representation on $X_{\ovl\rho}$, not just a sheaf, since it is attached to the representation $\rho^\univ$. If $X$ is a rigid analytic space $X$ over $L$ carrying a sheaf $\bV$ of $G$-representations lifting $\ovl\rho$ (and also carrying some associated $\iota$), then the morphism $f\colon X\to X_{\ovl\rho}$ provided to us by the universal property identifies $\bV$ with the pullback $f^\ast\bV_{\ovl\rho}^\univ$, which is a $G$-representation on $X$. We deduce the following.

\begin{lemma}
	Assume that $\End_{k_L}(\ovl\rho)=k_L$. Let $X$ be a rigid analytic space and $\bV$ a sheaf of $G$-representations on $X$ lifting $\ovl\rho$. Then $\bV$ is a $G$-representation on $X$.
\end{lemma}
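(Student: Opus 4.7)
The strategy is to formalize the observation made in the paragraph preceding the statement: the universal property of $(R_{\ovl\rho},\iota^\univ,\rho^\univ)$ should provide a morphism $f\colon X\to X_{\ovl\rho}$ with $\bV\cong f^\ast\bV_{\ovl\rho}^\univ$, and since $\bV_{\ovl\rho}^\univ$ is a genuine $G$-representation on $X_{\ovl\rho}$ (being attached to the finite free $\cO_{X_{\ovl\rho}}(X_{\ovl\rho})$-module underlying $\rho^\univ$), its pullback along $f$ would be a $G$-representation on $X$, concluding the argument. The only gap is that Proposition \ref{defequiv} gives the universal property on wide open rigid spaces, not on arbitrary ones, so the task reduces to extending the map $f$ from a wide open cover to all of $X$.

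First I will admissibly cover $X$ by wide open subspaces $\{U_i\}_{i\in I}$; in the situations of interest to this paper such a cover exists by taking Berthelot generic fibers of profinite open affine formal subschemes in a formal model of $X$ (see Remark \ref{woiff} and Remark \ref{qsqp}). Applying Proposition \ref{defequiv} on each $U_i$ to the restriction $\bV|_{U_i}$ yields a morphism $f_i\colon U_i\to X_{\ovl\rho}$ with $\bV|_{U_i}\cong f_i^\ast\bV_{\ovl\rho}^\univ$, so that $\bV|_{U_i}$ is itself a $G$-representation, attached to the base change along $R_{\ovl\rho}\to\cO_{U_i}(U_i)$ of the free $R_{\ovl\rho}$-module underlying $\rho^\univ$.

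Next I will glue the $f_i$ into a global morphism $f\colon X\to X_{\ovl\rho}$. On each overlap $U_i\cap U_j$, refined if necessary by a further wide open cover, both $f_i$ and $f_j$ classify the same pair $(\bV|_{U_i\cap U_j},\iota|_{U_i\cap U_j})$; the uniqueness in the universal property of $(R_{\ovl\rho},\iota^\univ,\rho^\univ)$—which holds precisely because the assumption $\End_{k_L[G]}(\ovl\rho)=k_L$ makes $\Def_{\ovl\rho}$ itself, rather than merely its framed variant, representable—forces $f_i|_{U_i\cap U_j}=f_j|_{U_i\cap U_j}$. The underlying $\cO_{U_i}(U_i)$-modules, being pullbacks of a single finite free module along compatible maps, then glue to a finite free $\cO_X(X)$-module with $G$-action whose associated sheaf is $\bV$.

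The main obstacle I anticipate is ensuring the existence of a sufficiently fine wide open admissible cover, which is clear for the rigid analytic spaces arising in this paper (admitting flat formal models by Raynaud--Gruson under standing quasi-separation and quasi-paracompactness hypotheses, cf. Remark \ref{qsqp}), but may need additional care in full generality. Once this bookkeeping is handled, the substantive content is the uniqueness in Mazur's universal property—the precise input justified by the endomorphism hypothesis on $\ovl\rho$—and no further cohomological or descent argument is needed.
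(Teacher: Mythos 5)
Your overall strategy — obtain a morphism $f\colon X\to X_{\ovl\rho}$ from the universal property, pull back $\bV_{\ovl\rho}^\univ$, and glue local classifying maps using the uniqueness that $\End_{k_L[G]}(\ovl\rho)=k_L$ provides — is exactly the idea the paper sketches in the paragraph preceding the lemma, and the gluing argument via rigidity of $\iota$ up to a unit scalar is correct. The problem lies in the first step: you try to cover $X$ by wide opens, but an \emph{admissible} wide open cover does not exist for a general rigid analytic space, and in fact fails already in the most basic case when $X$ is affinoid. The paper itself makes this point explicitly in Example \ref{chenex}, where it is observed that the tubes $U_{x,\Spf\cA}^{(1)}$ cover an affinoid $\Spm A$ but that ``such a covering is not admissible in general.'' Your suggestion to produce wide opens as Berthelot generic fibers of \emph{profinite} open affine formal subschemes also does not go through: for a formal model such as $\Spf\cO_L\langle T\rangle$, no open formal subscheme has profinite ring of functions (the topology on any open affine piece is still $\pi_L$-adic, not $\fm$-adic), so no nonempty wide open arises this way. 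Without an admissible cover, the maps $f_i$ cannot be glued.

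The fix is to cover $X$ by affinoids on which $\bV$ is free — such an admissible cover always exists, by the definitions of rigid analytic space and of locally free sheaf — and then invoke the \emph{affinoid} case of the universal property, which the paper attributes to Chenevier and records just before Proposition \ref{defequiv}: ``if the category of wide open rigid analytic $L$-spaces is replaced with that of $L$-affinoids, the following result is the absolutely irreducible case of \cite[Theorem 2.2(iii,iv)]{chenlec5}.'' With that substitution your gluing-and-pullback argument closes the proof. One smaller point: Proposition \ref{defequiv} (and the surrounding subsection) is stated under the running hypothesis that $\ovl\rho$ is \emph{absolutely irreducible}, whereas the lemma only assumes $\End_{k_L[G]}(\ovl\rho)=k_L$; if you want to cite Proposition \ref{defequiv} or its affinoid counterpart you should either note that the intended reading of the lemma carries this stronger hypothesis, or check that the cited representability input from Chenevier holds in the generality you need.
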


Recall that, as functors on $\Art_{\cO_L}$, we have natural transformations $\Def_{\ovl\rho}^\sq\to\Def_{\ovl\rho}\to\PDef_{\ovl\rho}$, inducing a morphism $R_{\ovl\rho}^\ps\to R_{\ovl\rho}^\sq$, and, if $\End_{k_L}(\ovl\rho)=k_L$, $R_{\ovl\rho}^\ps\to R_{\ovl\rho}\to R_{\ovl\rho}^\sq$. The corresponding natural transformations of functors in the rigid setting are given as follows:
\begin{itemize}
	\item $\Def_{\ovl\rho}^\sq\to\Def_{\ovl\rho}$ maps a triple $(\bV,\iota,b)$ to the pair $(\bV,\iota)$,
	\item $\Def_{\ovl\rho}\to\PDef_{\ovl\rho}$ maps a pair $(\bV,\iota)$ to the trace of $\bV$.
\end{itemize}
By passing to the representing objects we obtain a morphism $X^\sq_{\ovl\rho}\to X_{\ovl\rho}^\ps$, and, if $\End_{k_L}(\ovl\rho)=k_L$, morphisms $X^\sq_{\ovl\rho}\to X_{\ovl\rho}\to X^\ps_{\ovl\rho}$. These morphisms are also induced by the ring homomorphisms recalled above. 


\begin{rem}\label{exlatt}
	The $G$-representation $\bV_{\ovl\rho}^{\sq,\univ}$ on $X^\sq_{\ovl\rho}$ admits a lattice: it is the $G$-representation on $\Spf R_{\ovl\rho}^\sq$ associated with $\rho^{\sq,\univ}\colon G\to\GL_n(R_{\ovl\rho}^\sq)$.
	
	When $\End_{k_L[G]}(\ovl\rho)=k_L$, the $G$-representation $\bV_{\ovl\rho}^\univ$ on $X_{\ovl\rho}$ admits a lattice: it is the $G$-representation on $\Spf R_{\ovl\rho}$ associated with $\rho^{\univ}$.
\end{rem}

\medskip

\section{Lattices in families of representations}\label{seclatt}

In the context of Raynaud's theory of formal models, Chenevier proves that every sheaf of $G$-representations on a reduced, quasi-compact, quasi-separated rigid analytic space $X$ admits a sheaf of lattices \cite[Lemme 3.18]{chenappuni}. We state a version of his result in the context of Berthelot's theory. The proof simply requires checking that Chenevier's argument goes through for a rigid analytic space that admits a formal model in the sense of Berthelot. Let $X$ be a rigid analytic space over $L$, and $\bV$ a sheaf of $G$-representations over $X$.

\begin{prop}\label{chenlatt}
	Assume that there exists a torsion-free formal model $\cX_0$ of $X$, and a torsion-free, coherent $\cO_{\cX_0}$-module $\cV_0$ such that $\cV_0\otimes_{\cO_L}L$ is isomorphic to $\bV$ as an $\cO_X$-module (without taking into account the action of $G$). Then there exists a formal model $\cX$ of $X$ and a sheaf of lattices for $\bV$ defined over $\cX$. 
\end{prop}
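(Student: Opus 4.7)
The plan is to adapt Chenevier's proof of \cite[Lemme 3.18]{chenappuni} to Berthelot's theory of formal models, checking that his argument carries over. I would first reduce to a local problem by taking an affine open covering of $\cX_0$ by opens $\cU_i=\Spf\cA_i$, with $\cA_i\in\Ad_{\cO_L}$ Noetherian. On each $\cU_i$ the module $M_i\coloneqq\cV_0(\cU_i)$ is torsion-free and finitely generated over $\cA_i$, and satisfies $M_i\otimes_{\cO_L}L\cong\bV(U_i)$ where $U_i=\cU_i^\rig$, but is not in general $G$-stable.

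The key step is to enlarge $M_i$ to a $G$-stable $\cA_i$-lattice inside $\bV(U_i)$. Compactness of $G$ together with continuity of the $G$-action implies that the orbit $\{g\cdot M_i\}_{g\in G}$ is uniformly bounded: there exists $N_i\ge 0$ such that $g\cdot M_i\subset\pi_L^{-N_i}M_i$ for all $g\in G$. When $U_i$ is affinoid this is immediate from the continuity of $G\to\Aut_{\cO_X(U_i)}\bV(U_i)$ in the Banach topology of a finitely generated module over an affinoid algebra; for a wide-open $U_i$, which can arise in the Berthelot setting when $\cA_i$ involves formal power series variables, I would cover $U_i$ by an admissible increasing union of affinoids, apply the boundedness on each affinoid piece, and patch using Noetherianity of $\cA_i$. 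Once boundedness is established, the submodule $M_i'\coloneqq\sum_{g\in G}g\cdot M_i$ of $\pi_L^{-N_i}M_i$ is finitely generated over $\cA_i$ by Noetherianity, and by construction is a $G$-stable $\cA_i$-lattice in $\bV(U_i)$.

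Finally, I would glue the local lattices $M_i'$ into a global sheaf of $G$-representations. On overlaps the constructions typically agree only up to $\pi_L$-power factors, so, following Chenevier, I would perform an admissible formal blow-up $\cX\to\cX_0$ along an ideal sheaf encoding these discrepancies; the strict transforms of the $M_i'$ then glue to give a coherent $\cO_\cX$-module $\cV$ equipped with a $G$-action, whose generic fiber recovers $\bV$ with its $G$-action. The main technical point in transferring Chenevier's argument is precisely the orbit-boundedness when the generic fiber of an affine formal open is wide open rather than affinoid; since the relevant $\cA_i$ remain Noetherian in Berthelot's framework and admissible blow-ups are local constructions, everything else goes through with only cosmetic modifications.
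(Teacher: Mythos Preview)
Your overall strategy is the right one, and it is the paper's strategy: first enlarge $\cV_0$ to a $G$-stable coherent subsheaf of $\bV$, then pass to a blow-up. But you misidentify both what the blow-up is for and what the genuine obstacle is.

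The gluing concern is a non-issue. Since the $M_i$ are the restrictions of the global sheaf $\cV_0$, the sums $M_i'=\sum_{g\in G}g\cdot M_i$ are automatically the restrictions of the global subsheaf $\cV_1\coloneqq\sum_g g\cdot\cV_0$ of $\bV$ (the operation $M\mapsto\sum_g gM$ commutes with restriction). There is nothing to glue and no ``discrepancies on overlaps'' to encode. The paper handles even the coherence of $\cV_1$ more cleanly than your boundedness estimate: rather than bounding the orbit on each affine piece, it observes that the stabilizer in $G$ of $\prod_i V_{0,i}$ inside $\prod_i V_i$ is open (the submodule is open and the action is continuous), hence of finite index since $G$ is profinite; then $\cV_1$ is a \emph{finite} sum $\sum_j g_j\cdot\cV_0$ of coherent subsheaves and is manifestly coherent. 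This completely bypasses the wide-open boundedness issue you flag as the main technical point.

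The real obstacle, which your proposal does not address, is that a sheaf of $G$-representations is by definition \emph{locally free}, whereas $\cV_1$ is only coherent and torsion-free. The blow-up in Chenevier's argument is not there to glue local lattices; it is the blow-up of $\cX_0$ along the Fitting ideal of $\cV_1$, and one takes the strict transform of $\cV_1$, which becomes locally free on the new model $\cX$. Your ``ideal sheaf encoding these discrepancies'' has no reason to achieve local freeness, and without this step you have not produced a sheaf of lattices.
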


\begin{proof}
	Let $\cX_0, \cV_0$ be as in the statement. Let $\{\Spf\cA_i\}_{i\in I}$ be a covering of $\cX_0$ by open affine formal subschemes such that $\cV_0\vert_{\Spf\cA_i}$ is free, i.e. it is attached to a finite free $\cA_i$-module $V_{0,i}$. Clearly $\bV\vert_{\Spm(\cA_i[1/p])}$ is attached to the finite free $\cA_i[1/p]$-module $V_i\coloneqq V_{0,i}\otimes_{\cO_L}L$. 
	The action of $G$ on $\bV$ induces a continuous action of $G$ on $\prod_{i\in I}V_i$, and the stabilizer $H$ of the open subring $\prod_{i\in I}V_{0,i}\subset \prod_{i\in I}V_i$ is open in $G$. Since $G$ is profinite, $H$ is of finite index in $G$. Let $\{g_i\}_i$ be a finite set of representatives for the left $H$-cosets in $G$. The finite sum $\cV_1=\sum_{i}g_i(\cV_0)$ of subsheaves of $\bV$ is a $G$-stable subsheaf of $\bV$. It has a natural structure of coherent, torsion-free $\cO_{\cX_0}$-module, and $\cV_1\otimes_{\cO_L}L\cong\bV$. 
	
	In general, it is not true that $\cV_1$ is locally free as an $\cO_{\cX_0}$-module; we rely on Raynaud's theory of admissible blow-ups in order to replace it with a locally free sheaf over a suitable formal model of $X$. 
	Let $\mathcal I$ be the Fitting ideal of the $\cO_{\cX_0}$-module $\cV_1$; it is an admissible ideal sheaf in $\cO_{\cX_0}$. We consider the blow-up $\cX$ of $\cX_0$ relative to $\mathcal I$, as in \cite[Chapter II, Sections 1.1(a-b)]{FK1}: it is another admissible formal model of $\cX$. By \cite[Chapter II, Section 1.2]{FK1} the strict transform $\cV$ of $\cV_1$ along $\cX\to\cX_0$ is a coherent, locally free $\cO_\cX$-module of finite type over $\cX$, and its generic fiber is still isomorphic to $\bV$. The $\cO_{\cX_0}$-linear action of $G$ on $\cV_1$ induces an $\cO_\cX$-linear action of $G$ on $\cV$, giving back the original action of $G$ on the generic fiber $\bV$. 
\end{proof}



\begin{rem}\label{wocoh}
The assumptions of Proposition \ref{chenlatt} are satisfied if $X$ is either of the following:
\begin{itemize}
\item A quasi-separated, quasi-compact rigid analytic space: $\cX_0$ and $\cV_0$ are provided to us by Raynaud's theory \cite[Section 8.4, Lemma 4(e)]{boschbook}.
\item A wide open rigid analytic space: we can choose $\cX_0=\Spf\cO_X^+(X)$ by the discussion after Definition \ref{defwo}. To construct $\cV_0$, consider the increasing affine covering $\{\cX_i\}_{i\ge 1}$ of $\cX_0$ described in \cite[Section 7.1.1]{DJ95}: the generic fibers of the $\cX_i$ give an increasing, admissible affinoid covering $\{X_i\}_{i\ge 1}$ of $X$. 
For every $i$, Raynaud's theory produces a torsion-free, coherent $\cO_{\cX_i}$-module $\cV_i$ such that $\cV_{i}\otimes_{\cO_L}L\cong\bV\vert_{X_i}$. Fix an $i_0$. Via the argument in \cite[Lemma 2.2]{lutkeformal}, we can modify $\cV_{i+1}$ so into another lattice $\cV_{i+1}'$ inside $\bV_{i+1}$, such that $\cV_{i+1}\otimes_{\cO_X(X_{i+1})}\cO_X(X_i)\cong\cV_i$ (note that \emph{loc. cit.} cannot be applied to our wide open space $X$, but we only need to apply it to $X_{i+1}$). Then by setting $\cV(A)=\cV_i(A)$ for every $i$ and every affine formal subscheme $a\subset\cX_i$ we obtain a lattice inside $\bV$.
\end{itemize}
\end{rem}

	\subsection{Existence of lattices over wide open rigid analytic spaces}

	Let $X$ be an integral, strictly quasi-Stein (in particular, wide open) rigid analytic space over $L$. 
	Let $A^+=\cO_X^+(X)$. By \Cref{wideopenfact} and \Cref{converse}, $A^+$ is a local profinite $\cO_L$-algebra and $X$ is the generic fiber of $\Spf A^+$. in particular, $A^+$ is a complete Hausdorff local ring, hence Henselian. 
	The total fraction ring $K_X$ of $A$ is a field since $A$ is integral. 
	The following is a simple consequence of \cite[Proposition 1.6.1]{bellchen}.
	
	\begin{prop}\label{prop:factorial}
	Let $\bV$ be an absolutely irreducible, residually multiplicity-free $G$-representation $\bV$ on $X$ of rank $d<p$. If $\bV$ is not residually absolutely irreducible, assume that $\cO_X^+(X)$ is a UFD.
	Then $\bV$ admits a lattice defined over $\Spf\cO_X^+(X)$.
	\end{prop}
	
	\begin{proof}
	Let $n$ be the rank of $\bV$, and let $\rho_\bV:G\to\GL_n(K_X)$ be the representation attached to $\bV$.  In order to prove the theorem, we need to exhibit a representation $G\to\GL_n(A^+)$ inducing $\bV$ upon extension of scalars to $A$.
	Since $X$ is wide open, the assumptions of Proposition \ref{chenlatt} are satisfied by Remark \ref{wocoh}. Therefore, there exists a sheaf of lattices $\cV$ for $\bV$, defined over some formal model $\cX$ of $X$. By Remark \ref{maxmodel}, we can assume that $\cX=\Spf A^+$ since $X$ is wide open. Let $T\colon G\to A^+$ be the pseudorepresentation attached to $\cV$, in the sense of Definition \ref{deftrace}. Under our assumption that $d<p$, we apply \cite{nyssen}, \cite[Corollaire 5.2]{rouquier} in case (i), and \cite[Proposition 1.6.1]{bellchen} in case (ii), to $R=A^+[G]$ and the local Henselian ring $A^+$, and obtain an $A^+$-algebra homomorphism $R\to\Mat_n(A^+)$ with trace $T$. Restricting it to $G$ gives a representation $\rho:G\to\GL_n(A^+)$ with trace $T$. Then $\rho\otimes_{A^+}K_X$ and $\rho_\bV$ share the same trace, and since they are absolutely irreducible, they are isomorphic (by a theorem of Taylor \cite{taylorgal}, there is a unique irreducible representation of trace $T$ over an algebraic closure $\ovl{K_X}$, and a standard descent argument shows that the uniqueness holds over $K_X$).
	\end{proof}
	
	\begin{rem}\label{rem:regular}
The assumption that $A^+$ is a UFD holds for instance if $X$ is a wide open disc (see Example \ref{exdisc}), or more generally if $X$ is a regular (strictly quasi-Stein) rigid space. Indeed, if $X$ is regular, then its model $\Spf A^+$ is \emph{rig-regular} in the sense of Temkin (see \cite[Remark 3.1.3(iv)]{temkin}), meaning that it is regular outside of its closed fiber. But since $X$ is strictly quasi-Stein, $A^+$ is local, equipped with the topology of its maximal ideal, so that the closed fiber is a single point.

In our arithmetic applications of Section of \ref{sec:arith}, we can typically assume that the neighborhood we work over is a wide open disc. Note that more deformation spaces of arithmetic interest are regular at sufficiently general points; for instance, the trianguline deformation spaces studied in \cite{brehelschloc}. 
\end{rem}

\begin{rem}\label{quasioptimal}
The proof of Proposition \ref{prop:factorial} goes through if one only starts with a sheaf of $G$-representations, rather than a $G$-representation, on $X$, but produces as an output a lattice over $X$, so that a fortiori one has a $G$-representation. In particular, such a result is unlikely to hold unless every sheaf of $G$-representations on $X$ is a $G$-representation, which is true for $X$ quasi-Stein by Kiehl's theorem (see Remark \ref{rem:kiehl}). Since the required cohomological vanishing for $\cO_X$ is very close to quasi-Steinness (see for instance \cite{macpoistein} for a comparison of the two properties in the setting of Berkovich spaces), we do not expect Proposition \ref{prop:factorial} to hold if only the wide openness of $X$ is assumed.
\end{rem}
    
	We give a corollary of Proposition \ref{prop:factorial}.

	\begin{cor}
		Let $\bV$ be an absolutely irreducible, residually multiplicity free $G$-representation of rank $d<p$ on an integral, strictly quasi-Stein rigid analytic space $X$. If $\bV$ is not residually absolutely irreducible, assume that $\cO_X^+(X)$ is a UFD. Then there exists a morphism $f\colon X\to X_{\ovl\rho}$ such that $\bV\cong f^\ast\bV^{\sq,\univ}$ as $G$-representations.
	\end{cor}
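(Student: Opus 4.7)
The plan is to combine Theorem \ref{wideopenlatt} with the universal property of the framed deformation ring as transported to the rigid setting in Proposition \ref{defequiv}. First, I would apply Theorem \ref{wideopenlatt} to produce a lattice $\cV$ for $\bV$ defined over the initial formal model $\cX = \Spf\cO_X^+(X)$ of $X$. By the very definition of ``lattice'', $\cV$ is attached to a finite free $\cO_X^+(X)$-module $M$ of some rank $n$ equipped with a continuous $\cO_X^+(X)$-linear action of $G$, and $\cV^\rig \cong \bV$ as $G$-representations. Choosing an $\cO_X^+(X)$-basis $b$ of $M$ and reducing modulo the maximal ideal of the local ring $\cO_X^+(X)$ produces a basis $\ovl b$ of the residual representation, which we take as our $\ovl\rho$.

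Next, since $\cO_X^+(X)$ is a profinite local $\cO_L$-algebra lying in $\wht\Art_{k_L}$ (Remark \ref{wideopenfact}), the triple $(\cV, \iota, b)$ --- with $\iota$ the tautological identification of the reduction of $\cV$ with $\ovl\rho$ --- defines a framed deformation of $\ovl\rho$ to $\cO_X^+(X)$. The universal property of $R_{\ovl\rho}^\sq$ then yields a morphism $R_{\ovl\rho}^\sq \to \cO_X^+(X)$ in $\wht\Art_{k_L}$, hence a morphism $\Spf\cO_X^+(X) \to \Spf R_{\ovl\rho}^\sq$ in $\Aff^\prof_{\cO_L}$. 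Applying Berthelot's generic fiber functor gives a morphism $f\colon X \to X_{\ovl\rho}^\sq$ of wide open rigid analytic spaces. Equivalently, one can argue directly via Proposition \ref{defequiv}, which identifies $F^\rig\vert_{\Rig^\wo_L}$ with the framed deformation functor on wide opens and thereby associates to $\bV$ (together with its chosen framing) a unique such $f$.

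Finally, the tautological fact that framed deformations are classified by $X_{\ovl\rho}^\sq$ means that the pullback $f^\ast \bV^{\sq,\univ}$ recovers the original framed deformation; unwinding the constructions gives an isomorphism $\bV \cong f^\ast \bV^{\sq,\univ}$ of $G$-representations on $X$, as required. The only nontrivial input is Theorem \ref{wideopenlatt}, which ensures that we have an honest \emph{free} lattice (not merely a locally free sheaf of lattices) over the \emph{initial} formal model $\Spf\cO_X^+(X) \in \Aff^\prof_{\cO_L}$; without this the framing and the application of the universal property at the level of $\wht\Art_{k_L}$ would not be available, so this is the step that does all the work. The remainder is a direct translation through the equivalences of categories recalled in Sections \ref{secrig} and \ref{funrig}.
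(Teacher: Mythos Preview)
Your proposal is correct and follows exactly the same approach as the paper's proof, which is just the two-sentence sketch: apply Theorem \ref{wideopenlatt} to obtain a lattice, then invoke the universal property of the framed quadruple $(X_{\ovl\rho}^\sq,\bV^{\sq,\univ},\iota^{\sq,\univ},b^{\sq,\univ})$. You have simply unpacked the second sentence in detail (choosing the basis, passing through $\wht\Art_{k_L}$, and taking Berthelot generic fibers); one minor caveat is that Proposition \ref{defequiv} as stated treats the \emph{unframed} functor $\Def_{\ovl\rho}$, so your direct argument via the universal property of $R_{\ovl\rho}^\sq$ is the cleaner route.
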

	
	\begin{proof}
		By Proposition \ref{prop:factorial}, $\bV$ admits a lattice. The result then follows from the universal property of the quadruple $(X_{\ovl\rho}^\sq,\bV^{\sq,\univ},\iota^{\sq,\univ},b^{\sq,\univ})$.
	\end{proof}

\medskip

\section{Constancy modulo $p^n$ of sheaves of $G$-representations}

We denote by $L$ a $p$-adic field, with valuation ring $\cO_L$ having maximal ideal $\fm_L\subset\cO_L$ and residue field $k_L$. We fix a uniformizer $\pi_L$ of $L$. Recall that we have chosen a $p$-adic norm on $\Qp$ satisfying $\lvert p\rvert=p^{-1}$, so that $\lvert\pi_L\rvert=p^{1/e}$, where $e$ is the ramification index of $L/\Q_p$.

Given a sheaf of $G$-representations $\bV$ over a rigid analytic $L$-space $X$, we are interested in finding subdomains of $X$ over which $\bV$ is \emph{constant} modulo a certain power of $\pi_L$. Since such a notion will depend on the choice of a sheaf of lattices for $\bV$, we start by giving two definitions of constancy mod powers of $\pi_L$ for a sheaf of $G$-representations over a formal scheme. We rely on the definitions and discussion in \cite[Section 2]{tortilatt}. 

Let $\cX$ be a formal scheme over $\cO_L$, and $\cV$ a sheaf of $G$-representations over $\cX$. For every $n\in\Z_{\ge 1}$, $p^n\cO_\cX$ is a closed ideal sheaf on $\cX$. We recall the following definition due to Torti, for which we refer to \cite[Section 2.1]{tortilatt}. 

\begin{defin}\label{Vndef}
For every $n\in\Z_{\ge 1}$, we define $\cV^{(n)}$ as the coherent $\cO_\cX$-module obtained by sheafifying the presheaf $\cV\otimes_{\cO_\cX}\cO_\cX/\pi_L^n\cO_\cX$, equipped with the $\cO_\cX$-linear action of $G$ induced by that on $\cV$.
\end{defin}

Obviously, the above definition is independent of the choice of a uniformizer $\pi_L$ of $L$.

For every extension $E$ of $L$, we denote by $\cX_E$ the formal scheme $\cX\otimes_{\cO_L}\cO_E$ and by $\cV_E$ the coherent $\cO_{\cX_E}$-module sheaf $\cV\otimes_{\cO_L}\cO_E$, equipped with the $G$-action obtained by extending $\cO_E$-linearly the $G$-action on $\cV$.

Borrowing the terminology of \cite[Section 8.3, Definition 1]{boschbook}, we call \emph{rig-point} of $\cX$ an equivalence class of morphisms $\Spf\cO\to\cX$, where $\cO$ is a valuation ring equipped with a finite local homomorphism $\cO_L\to\cO$, and equivalence is defined as in \cite[Section 7.1.10]{DJ95}. If $E$ is a finite extension of $L$, we say that a rig-point of $\cX$ is an \emph{$E$-rig-point} if it corresponds to the equivalence class of a morphism $\Spf\cO_E\to\cX$. Note that in \cite{boschbook}, the notion of rig-point is only introduced for admissible formal schemes.

Let $E/L$ be a finite extension with ramification index $e_{E/L}$. We define the natural number 
\[ \gamma_{E/L}(n)=(n-1)e_{E/L}+1, \]
following \cite[Definition 2.2]{taiwie}; we refer to \emph{loc. cit.} and Remark \ref{gammarem} below for some comments on this choice. 

If $\cA$ is any sheaf of $\cO_\cX$-modules and $x$ an $E$-rig-point of $\cX$ defined by a morphism $\iota_x\colon\Spf\cO_E\to\cX$, we write $\cA_x$ for the fiber of $\cA$ at $x$, i.e. the $\cO_E$-module attached to the pullback of $\cA$ along $\iota_x$.

\begin{defin}\label{lcformdef}
We say that $\cV$ is
\begin{enumerate}[label=(\roman*)]
	\item \emph{pointwise constant mod $p^n$} if, for every finite extension $E$ of $L$, with uniformizer $\pi_E$, the isomorphism class of $\cV_{E,x}^{(n)}$ as an $\cO_E/\pi_E^{\gamma_{E/L}(n)}[G]$-module is independent of the choice of an $E$-rig-point $x$ of $\cX_E$.
	\item \emph{constant mod $\pi_L^n$} if there exists a finite, free $\cO_L/\pi_L^n$-module $V^{(n)}$, equipped with an $\cO_L/\pi_L^n$-linear action of $G$, such that $\bV^{(n)}\cong V^{(n)}\otimes_{\cO_L/\pi_L^n}\cO_\cX/\pi_L^n\cO_\cX$.
\end{enumerate}
When $n=1$, we use the terminology \emph{locally constant mod $p$ up to semisimplification}, with the obvious meaning.
\end{defin}

Part (i) of Definition \ref{lcformdef} is simply a rephrasing of \cite[Definition 2.3]{tortilatt}. 

Semisimplification does not make sense in general for a representation that is not defined over a field, hence why this notion only appears when $n=1$.


\begin{rem}\label{gammarem}
For $m,n\ge 1$, the injection $\cO_L\into\cO_E$ induces an injection 
\begin{equation}\label{gammaeq} \cO_L/\pi_L^n\into\cO_E/\pi_E^m \end{equation}
if and only if the equality of ideals
\begin{equation}\label{ideq} \pi_E^m\cO_E\cap\cO_L=\pi_L^n\cO_L \end{equation}
holds, if and only if $m\in\{(n-1)e_{E/L}+1,ne_{E/L}\}$. Therefore, for fixed $n$, $\gamma_{E/L}(n)=(n-1)e_{E/L}+1$ is the smallest value of $m$ with the above property, as remarked in point (iv) after \cite[Definition 2.2]{tsawie}. Clearly, $\gamma_{E/L}(n)\ge n$.

Clearly \eqref{gammaeq} is not an isomorphism in general for $m=\gamma_{L/E}(n)$, nor for any other $m$ for which it is defined (simply pick any unramified extension $E/L$). 
It seems that injectivity is only sufficient to deduce congruences in $\cO_E$ from congruences in $\cO_L$, but in fact one can also go the other way around, as shown in point (v) after \cite[Definition 2.2]{tsawie}: for every $\alpha,\beta\in E$,
\begin{equation}\label{alphabeta} \alpha-\beta\in\pi_E^m\cO_E\iff\alpha-\beta\in\pi_L^n\cO_L. \end{equation}
In Section \ref{gammasec} below, we explain how to use the above property to relate constancy and pointwise constancy, thus justifying Definition \ref{lcformdef}.
\end{rem}

Let $X$ be a rigid analytic space over $L$. 

\begin{defin}\label{lcdef}
Let $\bV$ be a sheaf of $G$-representations over $X$, and let $n\in\Z_{\ge 1}$. We say that $\bV$ is:
\begin{enumerate}
	\item \emph{pointwise constant mod $p^n$} if it admits a sheaf of lattices, defined over a formal model $\cX$ of $X$, that is pointwise constant mod $p^n$;
	\item \emph{constant mod $\pi_L^n$} if it admits a sheaf of lattices, defined over a formal model $\cX$ of $X$, that is constant mod $\pi_L^n$.
\end{enumerate}
When $n=1$, we use the terminology \emph{locally constant mod $p$ up to semisimplification}, with the obvious meaning.
\end{defin}


\begin{rem}\label{piconst}
Let $\pi\colon Y\to X$ be a surjective morphism of rigid analytic spaces, and $\bV$ a sheaf of $G$-representations on $X$. Let $n\ge 1$. Then $\bV$ is pointwise constant mod $p^n$ on a subspace $U\subset X$ if and only if $\pi^\ast\bV$ is pointwise constant mod $p^n$ on $\pi^{-1}(U)$. The analogous statement with pointwise constancy mod $p^n$ replaced with constancy mod $\pi_L^n$ is false; the pullback $\pi^\ast\bV^{(n)}$ might be constant even if $\bV^{(n)}$ is not.
\end{rem}

\subsection{Constancy modulo $p^n$ of rigid analytic functions}

Let $\cX$ be an $\cO_L$-formal scheme. Let $f\in\cO_\cX(\cX)$. For every $n\in\Z_{\ge 1}$, we write $f^{(n)}$ for the image of $f$ under the natural projection $\cO_\cX(\cX)\to\cO_\cX(\cX)/\pi_L^n\cO_\cX(\cX)$. 

Let $E$ be a finite extension of $L$. We write $f_E$ for the element of $\cO_{\cX_E}(\cX_E)$ defined by $f\otimes 1$ via the natural map $\cO_\cX(\cX)\otimes_{\cO_L}\cO_E\to\cO_{\cX_E}(\cX_E)$. 
If $x$ is an $E$-rig-point of $\cX$ attached to a morphism $\iota_x\colon\Spf\cO_E\to\cX$, we denote by $f_x$ the evaluation of $f$ at $x$, i.e. the element of $\cO_E$ defined by the pullback $\iota_x^\ast f$. When we write $f_x^{(n)}$, we are first evaluating at $x$, and then reducing modulo $\pi_E^{(n)}$ the resulting element of $\cO_E$.

We give two notions of mod $\pi_L^n$ constancy for elements of $\cO_\cX(\cX)$, along the lines of what we did for sheaves of $G$-representations in Definition \ref{lcformdef}. 

\begin{defin}\label{lcfun}
Let $n\in\Z_{\ge 1}$. We say that $f\in\cO_\cX(\cX)$ is:
\begin{enumerate}[label=(\roman*)]
	\item \emph{pointwise constant mod $p^n$} if, for every finite extension $E$ of $L$ with uniformizer $\pi_E$, the element $f_{E,x}^{(\gamma_{E/L}(n))}\in\cO_E/\pi_E^{\gamma_{E/L}(n)}$ is independent of the choice of an $E$-rig-point $x$ of $\cX_E$;
	\item \emph{constant mod $\pi_L^n$} if $f^{(n)}$ belongs to the image of the structure map $\cO_L/\pi_L^n\to\cO_\cX(\cX)/\pi_L^n\cO_\cX(\cX)$.
\end{enumerate}
\end{defin}

As was the case with Definition \ref{lcformdef}, condition (ii) in Definition \ref{lcfun} is stronger than (i). For an example of a formal scheme and a function on it satisfying (i) but not (ii), we refer to Example \ref{exdisc} below.


Now let $X$ be a rigid analytic space over $L$. For a formal model $\cX$ of $X$, we consider functions in $\cO_\cX(\cX)$ as elements of $\cO_X(X)$ via \eqref{dejong}.

\begin{defin}
Let $n\in\Z_{\ge 1}$. We say that $f\in\cO_X(X)$ is pointwise constant mod $p^n$ (respectively, constant mod $\pi_L^n$) if there exists a formal model $\cX$ of $X$ such that $f\in\cO_\cX(\cX)$ and $f$ is pointwise constant mod $p^n$ (respectively, constant modulo $\pi_L^n$) as a function on $\cX$.
\end{defin}

\subsection{Constancy versus pointwise constancy}\label{gammasec}

We use Remark \ref{gammarem} to compare the notions of constancy and pointwise constancy for functions on formal schemes, and, as a consequence, for sheaves of $G$-representations. The following lemma shows that pointwise constancy mod $\pi_L^n$ fits between constancy mod $p^n$ and $p^{n+1}$. Via the calculations of Section \ref{secUV}, we will see that it actually sits \emph{strictly} between these two conditions.

Let $\cX=\Spf A$ be an affine $\cO_L$-formal scheme with $A$ an integral domain. 

\begin{lemma}\label{gammafun}
Let $f\in A$, and $n\in\Z_{\ge 1}$.
\begin{enumerate}[label=(\roman*)]
\item If $f$ is constant mod $\pi_L^n$, then it is pointwise constant mod $p^n$.
\item If $f$ is pointwise constant mod $p^{n+1}$, then it is constant mod $\pi_L^{n}$.
\end{enumerate}
\end{lemma}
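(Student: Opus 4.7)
My plan is to handle the two directions separately; part (i) is an immediate calculation, while part (ii) is substantially more delicate and proceeds in two steps.

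For part (i), I would directly compute. If $f = c + \pi_L^n h$ with $c \in \cO_L$ and $h \in A$, then for any finite extension $E/L$ and any $E$-rig-point $x$ of $\cX_E$ given by a continuous $\cO_L$-algebra homomorphism $\phi\colon A \to \cO_E$, one has $\phi(f) - c = \pi_L^n \phi(h) \in \pi_L^n \cO_E = \pi_E^{n e_{E/L}}\cO_E \subseteq \pi_E^{\gamma_{E/L}(n)}\cO_E$, the last inclusion using $n e_{E/L} \ge (n-1)e_{E/L}+1 = \gamma_{E/L}(n)$. It would follow that $f_{E,x}^{(\gamma_{E/L}(n))}$ equals the image of $c$, independently of $x$, giving pointwise constancy mod $p^n$.

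For part (ii), the plan is to first extract a constant $c \in \cO_L$ from the hypothesis, and then to show $g \coloneqq f - c \in \pi_L^n A$. For the first step, I would pick an $L$-rig-point $x_0$ of $\cX$ and set $c \coloneqq f_{x_0}$; applying the hypothesis to $x_0$ and any other rig-point, combined with the compatibility of the $\gamma$-exponents under field extension (i.e., $\pi_{E'}^{\gamma_{E'/L}(n+1)}\cO_{E'} \cap \cO_E = \pi_E^{\gamma_{E/L}(n+1)}\cO_E$ for any tower $E \subseteq E'$, which follows from Remark~\ref{gammarem}), would give $f_{E,x} \equiv c \pmod{\pi_E^{\gamma_{E/L}(n+1)}}$ for every $E$-rig-point $x$ of $\cX_E$. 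Using the identity $\pi_E^{\gamma_{E/L}(n+1)}\cO_E = \pi_L^n \pi_E \cO_E$, one would conclude $g_{E,x} \in \pi_L^n \pi_E \cO_E$ for every rig-point.

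For the second step, the element $g/\pi_L^n \in A[1/\pi_L]$ would take values in $\pi_E\cO_E \subset \fm_E$ at every rig-point, so $g/\pi_L^n$ would be power-bounded on the generic fiber $X \coloneqq \cX^{\rig}$ (via the maximum modulus principle on affinoids for the affinoid case, and an admissible affinoid exhaustion of $X$ for the general case). Under the standard assumptions that $\cX$ is $\cO_L$-flat and normal, \cite[Theorem~7.4.1]{DJ95} would give $\cO_\cX(\cX) = \cO_X^+(X)$, whence $g/\pi_L^n \in A$ and $g \in \pi_L^n A$. The main obstacle will be the case in which $\cX$ has no $L$-rig-point: there, the constant $c$ must be produced by Galois descent from an $E$-rig-point over a Galois extension $E/L$, using that the common class $f_{E,x}^{(\gamma_{E/L}(n+1))}$ is $\Gal(E/L)$-invariant (since Galois permutes the $E$-rig-points of $\cX_E$) and descends via the injection $\cO_L/\pi_L^{n+1}\hookrightarrow\cO_E/\pi_E^{\gamma_{E/L}(n+1)}$ of Remark~\ref{gammarem}; a subtle point in wildly ramified situations is verifying that the Galois-invariant class actually lies in the image, rather than merely in the Galois-fixed part of the target.
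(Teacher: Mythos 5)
Your part (i) is the same argument as the paper's (both reduce to the injection $\cO_L/\pi_L^n \into \cO_E/\pi_E^{\gamma_{E/L}(n)}$). For part (ii), the paper argues by contradiction: it fixes an $L$-rig-point $x_0$, sets $g = f - f(x_0)$, and asserts without proof that $\bigcap_x I_x = \pi_L^n A$, where $I_x$ is the preimage of $\pi_L^n\cO_E$ under a rig-point $x\colon A\to\cO_E$; if $g\notin\pi_L^nA$ one then finds a rig-point with $g(x)\notin\pi_E^{ne_{E/L}}\cO_E$, contradicting pointwise constancy since $\gamma_{E/L}(n+1)>ne_{E/L}$. Your proof is a constructive reformulation of the same idea: rather than invoking the intersection identity, you directly observe that $g/\pi_L^n$ is power-bounded on $\cX^\rig$ and then deduce $g/\pi_L^n\in A$ from $\cO_\cX(\cX)=\cO_X^+(X)$, i.e.\ \cite[Theorem 7.4.1]{DJ95}. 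These are logically equivalent routes to the same crucial fact, namely that the rig-points detect divisibility by $\pi_L^n$. What your version buys is transparency about the hypotheses: \cite[Theorem 7.4.1]{DJ95} requires $\cX$ flat and normal, and some such hypothesis is genuinely needed (the lemma as literally stated fails, e.g., for $A=\cO_L[[T]]/(T^2)$ with $f=T$, which is pointwise constant mod every power of $p$ but not constant mod $\pi_L$), whereas the paper leaves this implicit in the unproven claim $\bigcap_x I_x=\pi_L^nA$. You also correctly flag that the paper's choice of an $L$-rig-point $x_0$ presupposes one exists, and that in the general case one must produce the constant by Galois descent from an $E$-rig-point; this is a real gap in the paper's argument, and your concern about Galois-invariants versus the image of $\cO_L/\pi_L^{n+1}$ in $\cO_E/\pi_E^{\gamma_{E/L}(n+1)}$ in wildly ramified cases is legitimate and would need to be settled to complete the argument in full generality.
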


\begin{proof}
If $f$ is constant mod $\pi_L^n$, then by definition its image under $A\onto A/\pi_L^n$ belongs to the image of $\cO_L/\pi_L^n\to A/\pi_L^n$. By tensoring along the injection $\cO_L/\pi_L^n\into\cO_E/\pi_E^{\gamma_{E/L}(n)}$, one obtains that the image of $f\otimes 1\in A_E\coloneqq A\otimes_{\cO_L}\cO_E$ under $A_E\onto A_E/\pi_E^{\gamma_{E/L}(n)}$ also belongs to the image of $\cO_L/\pi_L^n\to A/\pi_L^n\to A_E/\pi_E^{\gamma_{E/L}(n)}$. Then, specializing at two different $E$-rig-points of $\cX_E$ obviously yields the same element of $\cO_E/\pi_E^{\gamma_{E/L}(n)}$ (belonging to the subring $\cO_L/\pi_L^n$). This proves (i).

Now assume that $f$ is pointwise constant mod $p^{n+1}$. We show that $f$ is constant mod $\pi_L^n$. 
Fix a rig-point $x_0$ of $\cX$, defined over a finite extension $E_0/L$, and let $g=f\otimes 1-1\otimes f(x_0)\in A_0\coloneqq A\otimes_{\cO_L}\cO_{E_0}$. We wish to show that  
$g$ is constant mod $\pi_L^n$, i.e. $g\in\pi_{E_0}^{ne(E_0/L)}A_0$. 
Let $k$ be the largest integer such that $g\in\pi_{E_0}^kA_0$, and assume by contradiction that $k<ne(E_0/L)$. By implicitly replacing $g$ with $\pi_{E_0}^{-k}g$, we can assume that $g\in A_0\setminus\pi_{E_0}A_0$, and that $g(x_0)\in\pi_{E}\cO_E$ for every finite extension $E/E_0$ and $E$-rig-point of $A_0$ (since $f$ is pointwise constant mod $p^{n+1}$). Let $\ovl g$ be the image of $g$ in $A_0/\pi_{E_0}$.

Since $A$ is reduced, so is $A_0$, and we can find a covering of $\Spf A_0$ by reduced, admissible affine formal schemes $\Spf B_i$ (note that $A_0$ is not necessarily admissible; if it is, we can simply take $\Spf A_0$ as a covering). Let $g_i$ be the image of $g$ under $A_0\to B_i$. If we prove that $g_i\in\pi_{E_0} B_i$ for every $i$, then we deduce that $g\in\pi_{E_0}A_0$ since the preimage of $\prod_i\pi_{E_0}B_i$ under $A_i\to\prod_iB_i$ is $\pi_{E_0}A_0$. 
Since the rig-points of $B_i$ form a subset of the rig-points of $A_0$, we have that $g_i(x)\in\pi_E\cO_E$ for every finite extension $E/E_0$ and $E$-rig-point of $B_i$. The specialization map from the rig-points of $\Spf B_i$ to the closed points of $\Spec B_i/\pi_{E_0}$ is surjective (see \cite[7.1.5/4]{bgr}), so we deduce that $\ovl g$ vanishes on every closed point of the reduced $\Spec B_i/\pi_{E_0}$, which implies $\ovl g_i=0$.
\end{proof}

With the next lemma, we reinterpret (pointwise) constancy of the reduction of a free lattice in terms of that of functions on the underlying formal scheme. 
Let $\cV$ be a free, rank $d$ sheaf of $G$-representations over an affine formal $\cO_L$-scheme $\cO_X$. By choosing a basis for $\cV$, we attach to it a continuous representation $\rho_\cV\colon G\to\GL_d(\cO_\cX)$. Let $n\in\Z_{\ge 1}$.

\begin{lemma}\label{gammalatt}
The lattice $\cV$ is pointwise constant mod $p^n$ (respectively, constant mod $\pi_L^n$) if and only if the matrix coefficients of $\rho_\cV$ are pointwise constant mod $p^n$ (respectively, constant mod $\pi_L^n$).
In particular:
\begin{enumerate}[label=(\roman*)]
\item if $\cV$ is constant mod $\pi_L^n$, then it is pointwise constant mod $p^n$;
\item if $\cV$ is pointwise constant mod $p^{n+1}$, then it is constant mod $\pi_L^{n}$.
\end{enumerate}
\end{lemma}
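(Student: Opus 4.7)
Since the condition is local, I may assume $\cX = \Spf A$ is affine, so a choice of basis of $\cV$ identifies it with $A^d$ equipped with the continuous representation $\rho_\cV \colon G \to \GL_d(A)$. The strategy is to translate (pointwise) constancy of the lattice into (pointwise) constancy of the matrix coefficients $(\rho_\cV)_{ij} \in A$ as rigid analytic functions, and then to invoke Lemma \ref{gammafun} coordinate by coordinate. Concretely, the biconditional to establish is: $\cV$ is constant mod $\pi_L^n$ (resp.\ pointwise constant mod $p^n$) if and only if in \emph{some} basis of $\cV$ all matrix coefficients of $\rho_\cV$ are constant mod $\pi_L^n$ (resp.\ pointwise constant mod $p^n$).

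The ``if'' directions are immediate. If the $(\rho_\cV)_{ij}$ are constant mod $\pi_L^n$, then $\rho_\cV^{(n)}$ factors through $\GL_d(\cO_L/\pi_L^n) \subset \GL_d(A/\pi_L^n)$; taking $V^{(n)} = (\cO_L/\pi_L^n)^d$ with this induced $G$-action yields tautologically $\cV^{(n)} \cong V^{(n)} \otimes_{\cO_L/\pi_L^n} A/\pi_L^n$. Similarly, if the matrix coefficients are pointwise constant mod $p^n$, then for every $E$-rig-point $x$ the matrix $\rho_\cV(x) \bmod \pi_E^{\gamma_{E/L}(n)}$ is \emph{literally} independent of $x$, so a fortiori so is the isomorphism class of $\cV_{E,x}^{(n)}$.

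The converse directions require producing a suitable basis. For constancy mod $\pi_L^n$, start from an isomorphism $\cV^{(n)} \cong V^{(n)} \otimes_{\cO_L/\pi_L^n} A/\pi_L^n$, pick a basis of $V^{(n)}$, transport it to a basis of $\cV^{(n)}$, and lift to a basis of $\cV$: the lift exists because $A$ is complete with respect to an ideal of definition containing $\pi_L$, so a standard Nakayama argument shows that any tuple in $\cV$ reducing to a basis mod $\pi_L^n$ is itself a basis. In the resulting basis the entries of $\rho_\cV$ reduce mod $\pi_L^n$ into the image of $\cO_L/\pi_L^n$. The argument for pointwise constancy is analogous at the level of fibers, the subtlety being that one needs to synthesize, from fiber-wise conjugations, a single global change of basis in $\GL_d(A)$; this is the step I expect to be the main obstacle, since pointwise data must be promoted to global data.

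With both biconditionals established, parts (i) and (ii) follow by applying Lemma \ref{gammafun} entrywise. For (i), a basis witnessing constancy mod $\pi_L^n$ of $\cV$ has matrix coefficients constant mod $\pi_L^n$, hence pointwise constant mod $p^n$ by Lemma \ref{gammafun}(i), whence $\cV$ is pointwise constant mod $p^n$. For (ii), a basis witnessing pointwise constancy mod $p^{n+1}$ of $\cV$ has matrix coefficients pointwise constant mod $p^{n+1}$, hence constant mod $\pi_L^n$ by Lemma \ref{gammafun}(ii), whence $\cV$ is constant mod $\pi_L^n$.
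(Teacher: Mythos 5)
Your proof follows the same route the paper intends: reduce to the affine case, translate (pointwise) constancy of $\cV$ into the corresponding property of matrix entries, and apply Lemma \ref{gammafun} entrywise. The ``if'' directions and the ``only if'' direction for constancy mod $\pi_L^n$ are handled correctly --- lifting a basis of $\cV^{(n)}$ to a basis of $\cV$ works precisely because $\pi_L$ lies in an ideal of definition of $\cA$, hence in its Jacobson radical.

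The step you flag as ``the main obstacle'' is a genuine gap, and your caution is warranted. Pointwise constancy of $\cV$ mod $p^n$ hands you, for each finite $E/L$ and each pair of $E$-rig-points $x,y$, a conjugating matrix in $\GL_d(\cO_E/\pi_E^{\gamma_{E/L}(n)})$, but not a single $P\in\GL_d(\cA)$ whose specializations effect all of these conjugations at once. The existential over bases really cannot be dropped: over $\cA=\Z_p\langle T\rangle$, conjugating the constant representation $g\mapsto\left(\begin{smallmatrix}1&1\\0&1\end{smallmatrix}\right)$ by $\left(\begin{smallmatrix}1&0\\T&1\end{smallmatrix}\right)$ produces a lattice that is pointwise constant mod $p$ in every fiber but whose $(1,1)$-entry $1-T$ in that basis is visibly not, so the discrepancy is not a formality. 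Since part (ii) factors exactly through this direction (you need matrix entries pointwise constant mod $p^{n+1}$ before Lemma \ref{gammafun}(ii) applies), your proof of (ii) is not closed. For what it is worth, the paper's own proof is the one-liner ``The proof of the first statement is straightforward. The second statement follows by combining the first one with Lemma \ref{gammafun}'' and does not engage with this globalization; a watertight argument would seem to require either an extra rigidity hypothesis (such as residual multiplicity-freeness, so that the GMA/pseudorepresentation machinery controls the fiberwise conjugations) or a direct verification that the relevant gluing obstruction over $\Spf\cA$ vanishes.
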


%

\begin{proof}
The proof of the first statement is straightforward. The second statement follows by combining the first one with Lemma \ref{gammafun}. 
\end{proof}

\subsection{Residue subdomains of rigid analytic spaces}\label{secUV}

Let $\cX$ be formal scheme over $\cO_L$, and $X=\cX^\rig$. We consider functions in $\cO_\cX(\cX)$ as elements of $\cO_X(X)$ via \eqref{dejong}. In particular, for a point $x\in X(L)$, it makes sense to consider the ideal $I_x=\{f\in\cO_\cX(\cX)\,\vert\,f(x)=0 \}$
of $\cO_\cX(\cX)$.

\begin{defin}\label{resdef}
	Let $n\in\Z_{\ge 1}$. 
	The \emph{mod $p^n$ wide open $\cX$-residue neighborhood} of $x$ in $X$ is the open subdomain
	\[ U_{x,\cX}^{(n)}=\{y\in X\,\vert\, \lvert f(y)\rvert<p^{(1-n)/e} \quad \forall f\in I_x\}. \]
 The \emph{mod $\pi_L^n$ affinoid $\cX$-residue neighborhood} of $x$ in $X$ is the rational subdomain
	\[ V_{x,\cX}^{(n)}=\{y\in X\,\vert\, \lvert f(y)\rvert\le p^{-n/e} \quad \forall f\in I_x\}. \]
\end{defin}

We emphasize in the notation that $U_{x,\cX}^{(n)}$ and $V_{x,\cX}^{(n)}$ depend on the formal model $\cX$ of $X$, not only on the rigid analytic space $X$. 

\begin{rem}\label{wores}
If $n=1$, then $U_{x,\cX}^{(1)}$ is the $\cX$-residue subdomain $\spec^{-1}(\spec(x))\subset X$ defined in \cite[7.1.10]{DJ95}, also called the ``tube'' of $x$ in the work of Berthelot. In particular, if $X$ is an irreducible wide open and $\cX=\Spf\cO_X^+(X)$, then $\cO_\cX(\cX)=\cO_X^+(X)$ is a local ring, so that the special fiber consists of a single point and $U_{x,\cX}^{(1)}=X$.

For every $x\in X(L)$, each of the collections $\{U_{x,\cX}^{(n)}\}_{n\ge 1}$ and $\{V_{x,\cX}^{(n)}\}_{n\ge 1}$ is a fundamental system of open neighborhoods of $x$.
\end{rem}

Assume from now on that $\cX=\Spf\cA$ is affine. By \cite[Lemma 7.1.9]{DJ95}, the $L$-points of $\cX^\rig$ are in bijection with the rig-points of $\cX$, i.e. the maximal ideals of $\cA$ not containing $\pi_L$; in particular, the ideal $I_x$ of $\cA$ is non-zero for every $x\in X(L)$. 

For every $x$ and $n$, $U_{x,\cX}^{(n)}$ and $V_{x,\cX}^{(n)}$ are non-empty admissible open neighborhoods of $x$ in $X$: indeed, since $\cA$ is Noetherian $I_x$ is finitely generated, say by $(f_1,\ldots,f_m)$, and
\begin{align*} U_{x,\cX}^{(n)}&=\{y\in X\,\vert\, \lvert f_i\rvert<p^{(1-n)/e} \quad \forall i=1,\ldots,m\}, \\
	V_{x,\cX}^{(n)}&=\{y\in X\,\vert\, \lvert f_i\rvert\le p^{-n/e} \quad \forall i=1,\ldots,m\} \end{align*}
which are admissible by \cite[9.1.4/5]{bgr}, and non-empty since they both contain $x$. 

We give a better description of $U_{x,\cX}^{(n)}$. Let $\cA_x^{(n)}$, respectively $\cB_x^{(n)}$, be the images of $\cA$ under the maps $\cA\into\cO_X(X)\to\cO_X(U_{x,\cX}^{(n)})$, respectively $\cA\into\cO_X(X)\to\cO_X(V_{x,\cX}^{(n)})$, the second map being in both cases the natural restriction. 

\begin{lemma}\label{Anwo}
	For every $x$ and $n$, $U_{x,\cX}^{(n)}$ is the wide open subdomain $(\Spf\cA_x^{(n)})^\rig\subset(\Spf\cA)^\rig$, and $V_{x,\cX}^{(n)}$ is the affinoid subdomain $\Spm\cB_x^{(n)}[1/\pi_L]\subset(\Spf\cA)^\rig$. 
	If $(f_1,\ldots,f_m)$ is any finite set of generators of $I_x$, then 
	\begin{align*} \cA_x^{(n)}&=\cA[[\pi_L^{1-n}f_1,\ldots,\pi_L^{1-n}f_m]], \\
		\cB_x^{(n)}&=\cA\langle\pi_L^{-n}f_1,\ldots,\pi_L^{-n}f_m\rangle. \end{align*}
Moreover, $V_{x,\cX}^{(n)}\subsetneq U_{x,\cX}^{(n)}\subsetneq V_{x,\cX}^{(m)}$ if $m<n$, and $\bigcap_{n\in\Z_{\ge 1}}U_{x,\cX}^{(n)}=\bigcap_{n\in\Z_{\ge 1}}V_{x,\cX}^{(n)}=\{x\}$. 
\end{lemma}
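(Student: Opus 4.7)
The plan is to identify each of $V_{x,\cX}^{(n)}$ and $U_{x,\cX}^{(n)}$ as the generic fiber (in the senses of Tate and of Berthelot, respectively) of an explicit formal model constructed from a chosen finite generating set $(f_1,\dots,f_m)$ of $I_x$, which exists by Noetherianity of $\cA$, and then to deduce the chain of inclusions and the intersection formula directly from the defining inequalities. Independence of the chosen generators follows from the fact that any two finite generating sets of the same ideal differ by an invertible matrix over $\cA$, which induces the same completed subalgebras after inverting or formally adjoining $\pi_L^{-n}$ or $\pi_L^{1-n}$ times the generators.

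\textbf{Paragraph 2 (affinoid case).} The subdomain $V_{x,\cX}^{(n)}=\{y:|f_i(y)|\le|\pi_L|^n\}$ is a Laurent rational subdomain of $X$, hence affinoid with Tate algebra $\cA\langle\pi_L^{-n}f_1,\dots,\pi_L^{-n}f_m\rangle[1/\pi_L]$ by the standard construction of rational subdomains. Since each $\pi_L^{-n}f_i$ is power-bounded on $V_{x,\cX}^{(n)}$, the $\cO_L$-algebra $\cA\langle\pi_L^{-n}f_1,\dots,\pi_L^{-n}f_m\rangle$ sits inside $\cO_X^+(V_{x,\cX}^{(n)})$ as the natural $\pi_L$-adically completed formal model of $V_{x,\cX}^{(n)}$, giving both the stated formula for $\cB_x^{(n)}$ and the identification $\Spm\cB_x^{(n)}[1/\pi_L]=V_{x,\cX}^{(n)}$.

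\textbf{Paragraph 3 (wide open case).} Consider the adic $\cO_L$-algebra
\[ \cA_x^{(n)}\coloneqq\cA[T_1,\dots,T_m]/(\pi_L^{n-1}T_i-f_i) \]
completed with respect to the ideal $(\pi_L,T_1,\dots,T_m)$; this is the intended meaning of the notation $\cA[[\pi_L^{1-n}f_1,\dots,\pi_L^{1-n}f_m]]$. By Berthelot's construction \cite[Section 7.1.3]{DJ95}, the generic fiber $(\Spf\cA_x^{(n)})^\rig$ is the admissible increasing union, indexed by $k\ge 1$, of the affinoids cut out by $|T_i|\le|\pi_L|^{1/k}$. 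Substituting $T_i=\pi_L^{1-n}f_i$ translates these into the affinoids $\{|f_i|\le|\pi_L|^{n-1+1/k}\}$, whose ascending union as $k\to\infty$ is exactly the strict-inequality locus $\{|f_i|<|\pi_L|^{n-1}=p^{(1-n)/e}\}= U_{x,\cX}^{(n)}$.

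\textbf{Paragraph 4 (inclusions and intersection).} The chain $V_{x,\cX}^{(n)}\subseteq U_{x,\cX}^{(n)}\subseteq V_{x,\cX}^{(m)}$ for $m<n$ follows from the numerical chain
\[ p^{-n/e}<p^{(1-n)/e}\le p^{-m/e}, \]
the middle inequality because $m\le n-1$ gives $-m/e\ge(1-n)/e$, so the weak bound $|f_i|\le p^{-n/e}$ entails the strict one $|f_i|<p^{(1-n)/e}$, which in turn entails $|f_i|\le p^{-m/e}$. Strictness of each containment is obtained, whenever $x$ is not isolated in $X$, by producing rig-points over finite extensions $E/L$ at which some $|f_i|$ attains one of the separating intermediate values in $p^{\Z/(e\cdot e_{E/L})}$. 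For the intersection, any $y\in\bigcap_n V_{x,\cX}^{(n)}$ satisfies $f_i(y)=0$ for every generator $f_i$ of $I_x$, so $y$ lies in the vanishing locus of $I_x$, which is precisely the rig-point $\{x\}$; the analogous statement for $\bigcap_n U_{x,\cX}^{(n)}$ then follows from the inclusion $\bigcap_n U_{x,\cX}^{(n)}\subseteq\bigcap_n V_{x,\cX}^{(n-1)}=\{x\}$. The main technical point will be to carry out Berthelot's generic-fiber construction carefully for the (not necessarily profinite) base $\cA$, so that the strict-inequality wide open really matches the ascending union of the affinoid formal models, rather than the closed affinoid $V_{x,\cX}^{(n-1)}$ one might naively guess.
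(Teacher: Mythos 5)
Your proof is correct and takes essentially the same route as the paper, which simply defers the identification of $\cA_x^{(n)}$ and $\cB_x^{(n)}$ to ``standard calculations in rigid analytic geometry''; your paragraphs 2 and 3 supply exactly those Raynaud/Berthelot computations. The one substantive difference is the strictness of $V_{x,\cX}^{(n)}\subsetneq U_{x,\cX}^{(n)}\subsetneq V_{x,\cX}^{(m)}$: the paper argues structurally that the middle term is wide open while the two outer terms are affinoid, so equality is impossible, whereas you exhibit separating rig-points at intermediate radii. The paper's argument is slicker and needs no extra points; your caveat (that $x$ not be isolated, otherwise every one of these neighborhoods collapses to $\{x\}$ and the strictness and wide-open claims degenerate) is in fact needed by both arguments, and you are right to flag it. A minor wrinkle your paragraph 3 glosses over, in the same way the paper's shorthand $\cA[[\pi_L^{1-n}f_1,\dots]]$ does: the quotient $\cA[T_1,\dots,T_m]/(\pi_L^{n-1}T_i-f_i)$ can have $\pi_L$-torsion, and one should pass to the $\pi_L$-torsion-free quotient (equivalently, the image of $\cA[T_1,\dots,T_m]$ in $\cO_X^+(U_{x,\cX}^{(n)})$, consistent with how the paper introduces $\cA_x^{(n)}$) to obtain an admissible formal model before taking the generic fiber.
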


\begin{proof}
The descriptions of $U_{x,\cX}^{(n)}$ and $V_{x,\cX}^{(n)}$ and the regular functions on them follow from standard calculations in rigid analytic geometry.

If $m<n$, the inclusions $V_{x,\cX}^{(n)}\subset U_{x,\cX}^{(n)}\subset V_{x,\cX}^{(m)}$ are trivial; they are strict since the outer terms are affinoid while the central one is wide open. The last statement follows from $\bigcap_{n\in\Z_{\ge 1}}U_{x,\cX}^{(n)}=\bigcap_{n\in\Z_{\ge 1}}V_{x,\cX}^{(n)}=\{y\in X\,\vert\, f(y)=0\quad\forall f\in I_x\}=x$.
\end{proof}

We will write $\cU_{x,\cX}^{(n)}=\Spf\cA_x^{(n)}$ and $\cV_{x,\cX}^{(n)}=\Spf\cB_x^{(n)}$ for the formal models of $U_{x,\cX}^{(n)}$ and $V_{x,\cX}^{(n)}$ provided to us by Lemma \ref{Anwo}. 

\begin{rem}\label{fincov}
Let $\pi\colon\cY\onto\cX$ be a finite cover of affine formal schemes, inducing a map $Y\to X$ of generic fibers that we still denote by $\pi$. Then a direct calculation shows that, for every $y\in Y(\Qp)$ and $n\ge 1$, $\pi(U_{y,\cY}^{(n)})\subset U_{\pi(y),\cX}^{(n)}$ and $\pi(V_{y,\cY}^{(n)})\subset V_{\pi(y),\cX}^{(n)}$.
\end{rem}

Our interest in the neighborhoods $U_{x,\cX}^{(n)}$ and $V_{x,\cX}^{(n)}$ lies in Theorem \ref{Unconst} below, that in turn is an immediate consequence of the following simple lemma. 
Let $n\in\Z_{\ge 1}$, and 
\begin{align*} \pi_{\cA,n}&\colon\cA\into\cA_x^{(n)}\to\cA^{(n)}_x/\pi_L^n\cA^{(n)}_x \\
	\pi_{\cB,n}&\colon\cA\into\cB_x^{(n)}\to\cB^{(n)}_x/\pi_L^n\cB^{(n)}_x \end{align*}
be the compositions of the natural map $\cA\into\cA_x^{(n)}$ and $\cA\into\cB_x^{(n)}$ with reduction modulo $\pi_L^n$.

\begin{lemma}\label{funcon}
	Every $f\in\cA$ is:
	\begin{enumerate}[label=(\roman*)]
	\item pointwise constant mod $p^n$ on $\cU_{x,\cX}^{(n)}$, hence on $U_{x,\cX}^{(n)}$; 
	\item constant mod $\pi_L^n$ on $\cV_{x,\cX}^{(n)}$, hence on $V_{x,\cX}^{(n)}$. 
	\end{enumerate}
\end{lemma}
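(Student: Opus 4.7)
The plan is to reduce both statements to a very concrete computation about the elements $f_i$ that generate $I_x$. Fix a finite generating set $(f_1,\ldots,f_m)$ of $I_x$, which exists since $\cA$ is Noetherian, and for $f\in\cA$ write $g=f-f(x)$, so that $g\in I_x$ and $g=\sum_{i=1}^m a_if_i$ for some $a_i\in\cA$. Note that $f(x)\in\cO_L$, so once we control $g$ modulo the appropriate ideal we are done.

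For part (ii), I would use the explicit description of $\cB_x^{(n)}$ from Lemma \ref{Anwo}: the element $\pi_L^{-n}f_i$ belongs to $\cB_x^{(n)}$ for every $i$, hence $f_i\in \pi_L^n\cB_x^{(n)}$. Since the $a_i$ lie in $\cA\subset\cB_x^{(n)}$, we obtain $g=\sum_i a_if_i\in\pi_L^n\cB_x^{(n)}$, so the image of $f$ in $\cB_x^{(n)}/\pi_L^n\cB_x^{(n)}$ equals $f(x)$, which lies in the image of $\cO_L/\pi_L^n$. This is exactly constancy mod $\pi_L^n$ of $f$ on $\cV_{x,\cX}^{(n)}$, and a fortiori on $V_{x,\cX}^{(n)}$.

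For part (i), an $E$-rig-point $y$ of $\cU_{x,\cX}^{(n)}$ is, by definition, a continuous $\cO_L$-algebra morphism $\cA_x^{(n)}=\cA[[\pi_L^{1-n}f_1,\ldots,\pi_L^{1-n}f_m]]\to\cO_E$. Since the formal variables $\pi_L^{1-n}f_i$ are topologically nilpotent in $\cA_x^{(n)}$, continuity forces them into the maximal ideal of $\cO_E$; that is, $v_E(\pi_L^{1-n}f_i(y))\ge 1$, whence $v_E(f_i(y))\ge 1+(n-1)e_{E/L}=\gamma_{E/L}(n)$. It follows that $g(y)=\sum_i a_i(y)f_i(y)\in\pi_E^{\gamma_{E/L}(n)}\cO_E$, since the $a_i(y)$ lie in $\cO_E$. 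Therefore $f(y)\equiv f(x)\pmod{\pi_E^{\gamma_{E/L}(n)}}$ for every such $y$, which is the desired pointwise constancy mod $p^n$ of $f$ on $\cU_{x,\cX}^{(n)}$, and hence on $U_{x,\cX}^{(n)}$.

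The only non-formal point in the argument is the upgrade of the strict inequality $v_E(f_i(y))>(n-1)e_{E/L}$ (coming from the open condition defining $U_{x,\cX}^{(n)}$, or equivalently from topological nilpotency in $\cA_x^{(n)}$) to the non-strict inequality $v_E(f_i(y))\ge \gamma_{E/L}(n)$; this uses integrality of $v_E$ on $E$ and explains why the exponent $\gamma_{E/L}(n)$ is exactly the right one in Definition \ref{lcformdef}, matching Remark \ref{gammarem}. Everything else is a direct unwinding of the descriptions of $\cA_x^{(n)}$ and $\cB_x^{(n)}$ given in Lemma \ref{Anwo}.
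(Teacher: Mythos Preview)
Your proof is correct and follows essentially the same approach as the paper's own argument: write $g=f-f(x)\in I_x$, then for (ii) use the description $\cB_x^{(n)}=\cA\langle\pi_L^{-n}f_1,\ldots,\pi_L^{-n}f_m\rangle$ to see $g\in\pi_L^n\cB_x^{(n)}$, and for (i) use the strict inequality $v_p(g(y))>(n-1)/e$ together with discreteness of the valuation on $E$ to upgrade to $v_E(g(y))\ge\gamma_{E/L}(n)$. The only cosmetic difference is that the paper applies the defining inequality of $U_{x,\cX}^{(n)}$ directly to $g$ (since $g\in I_x$), while you first decompose $g=\sum a_if_i$ and bound each $f_i(y)$; both are equivalent.
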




\begin{rem}
By Lemma \ref{gammalatt}, constancy mod $\pi_L^n$ is an intermediate condition between pointwise constancy mod $p^n$ and mod $p^{n+1}$: this is reflected in the fact that 
\[ U_{x,\cX}^{(n+1)}\subset V_{x,\cX}^{(n+1)}\subset U_{x,\cX}^{(n)}. \]
In Section \ref{optimalsec}, we show that, under certain conditions on $\cX$, the constancy neighborhoods $U_{x,\cX}^{(n)}$ and $V_{x,\cX}^{(n)}$ are optimal, so that the above inclusions actually follow from Lemma \ref{gammalatt}.
\end{rem}


\begin{proof}
	Let $f(x)\in\cO_L$ be the value of $f$ at $x$. Then $g=f-f(x)\in I_x$. Now by definition of $U_{x,\cX}^{(n)}$, for every finite extension $E/L$ and $E$-point $y$ of $U_{x,\cX}^{(n)}$, $\lvert g(y)\rvert<p^{\frac{1-n}{e}}$, i.e. $v_p(g(y))>(n-1)/e$, where $g(y)\in E$. Let $e^\prime$ be the ramification index of $E/L$. Since the valuation $v_p$ on $E$ is discrete and $v_p(\pi_E)=1/(ee^\prime)$, we deduce that $v_p(g(y))\ge (n-1)/e+1/(ee^\prime)\ge v_p(\pi_E^n)$, as desired.
	
	
	For part (ii), keep the above notation. Via the description of $\cB_x^{(n)}$ in Lemma \ref{Anwo}, we can find $g_1\in\cB_{x}^{(n)}$ such that $g=\pi_L^ng_1$. In particular, the image of $f$ under $\cB_x^{(n)}\to\cB_x^{(n)}/\pi_L^n\cB_x^{(n)}$ coincides with the image of $f(0)$ under $\cO_L\to\cO_L/\pi_L^n\cO_L$.
\end{proof}

\begin{rem}\label{UVchar}
	Let $\cX$ be an affine formal model of $X$, $x\in X(L)$ and $n\in\Z_{\ge 1}$. As one sees directly from their definitions, the subspaces $U_{x,\cX}^{(n)}$ and $V_{x,\cX}^{(n)}$ are characterized by the following properties:
	\begin{itemize}
		\item Let $\wtl x\colon\cO_L\to\cX$ be the rig-point attached to $x\in X(L)$. 
		The subdomain $U_{x,\cX}^{(n)}$ is characterized by the following property: for every finite extension $E/L$, a point $y\in X(E)$ belongs to $U_{x,\cX}^{(n)}$ if and only if the composition of the associated rig-point $\wtl y\colon \cX\to\cO_E$ with $\cO_E\to\cO_E/\pi_E^n$ coincides with the composition of $\wtl x$ with $\cO_E\to\cO_E/\pi_E^n$. 
		\item Let $\cO_\cX(\cX)\to\cO_\cX(\cX)\otimes_{\cO_L}\cO_L/\pi_L^n$ be the reduction map mod $\pi_L^n$. Then $U_{x,\cX}^{(n)}$ is the largest among the subspaces $U\subset X$ such that $x\in U(L)$ and every element of $\cO_\cX(\cX)$ is constant mod $\pi_L^n$ over $U$.
	\end{itemize}
\end{rem}

\begin{ex}\label{exdisc}
	Let $\cA=\cO_L\langle\zeta_1,\ldots,\zeta_s\rangle[[\xi_1,\ldots,\xi_t]]$, so that $X$ is the product of an $s$-dimensional affinoid unit disc and a $t$-dimensional wide open unit disc. Let $x=0$, so that $I_x=(\zeta_1,\ldots,\zeta_s,\xi_1,\ldots,\xi_t)$, 
	\[ \cA_x^{(n)}=\Spf\cO_L[[\pi_L^{1-n}\zeta_1,\ldots,\pi_L^{1-n}\zeta_s,\pi_L^{1-n}\xi_1,\ldots,\pi_L^{1-n}\xi_t]], \]
	and $U_{x,\cX}^{(n)}=(\Spf\cA_x^{(n)})^\rig$, the wide open $(s+t)$-dimensional disc of center 0 and radius $p^{\frac{1-n}{e}}$. Note that $\cA_x^{(n)}$ is a UFD, being a ring of formal power series over a PID. If one replaces $x$ with any other point of $X$, one simply obtains a wide open disc of the same radius, centered at $x$. Similarly, $V_{x,\cX}^{(n)}$ is the affinoid disc of center $x$ and radius $p^{-n/e}$. 
	
	If for instance $n=1$ and $x=0$, then $\cA_0^{(1)}=\Spf\cO_L[[\zeta_1,\ldots,\zeta_s,\xi_1,\ldots,\xi_t]]$. If $E$ is a finite extension of $L$ with uniformizer $\pi_E$, an $E$-point of $U_{0,\cX}^{(1)}$ corresponds to a choice of values of $\zeta_1,\ldots,\zeta_s,\xi_1,\ldots,\xi_t$ in the maximal ideal $\fm_E=(\pi_E)$ of the valuation ring $\cO_E\subset E$. In particular, the value of an arbitrary $f\in\cA_0^{(1)}$ at any $E$-point of $U_{0,\cX}^{(1)}$ is congruent to $f(0)$ modulo $\pi_L$.
	
	The image of $\cA$ under $\cA_0^{(1)}\to\cA_0^{(1)}/\pi_L\cA_0^{(1)}$ is isomorphic to $k_L[\zeta_1,\ldots,\zeta_s,\xi_1,\ldots,\xi_t]$: in particular, the structure map $k_L\to\cA/\pi_L\cA_0^{(1)}$ is not surjective, so that not every element of $\cA$ is constant mod $\pi_L$ over $U_{0,\cX}^{(1)}$. On the other hand, $\cB_x^{(1)}$ is the ring of power-bounded functions over the affinoid disc $V_{0,\cX}^{(1)}$ of center 0 and radius $p^{-1}$, isomorphic to $\cO_L[[\pi_L^{-1}\zeta_1,\ldots,\pi_L^{-1}\zeta_s,\pi_L^{-1}\xi_1,\ldots,\pi_L^{-1}\xi_t]]$: the image of $\cA$ under $\cB_0^{(1)}\to\cB_0^{(1)}/\pi_L\cB_0^{(1)}$ is obviously $k_L$, so that every element of $\cA$ is constant mod $\pi_L$ over $V_{0,\cX}^{(1)}$.
\end{ex}



\begin{ex}\label{exann}
Let $\cA=\cO_L\langle \zeta_1,\zeta_2\rangle/(\zeta_1\zeta_2-\pi_L^m)$, so that $X$ is the 1-dimensional affinoid annulus of radii $p^{-m/e}$ and 1 (in either variable). Let $(x_1,x_2)\in X(L)$. Then $I_{(x_1,x_2)}=(\zeta_1-x_1,\zeta_2-x_2)$. Therefore:
\begin{itemize}
\item $U_{x,\cX}^{(n)}$ is the locus where $\lvert\zeta_1-x_1\rvert<p^{\frac{1-n}{e}}$ and $\lvert\zeta_2-x_1\rvert<p^{\frac{1-n}{e}}$. Using the fact that $\zeta_1\zeta_2=\pi_L^m$ and $x_1x_2=\pi_L^m$, we can rewrite the second inequality as 
\begin{equation}\label{zeta2} \lvert \zeta_1-x_1\rvert<p^{\frac{1-n+m}{e}}\lvert\zeta_1x_1\rvert. \end{equation} Since $\lvert x_1\rvert\ge p^{-m/e}$, if $n-1>m$ then the first inequality implies that $\lvert\zeta_1\rvert=\lvert x_1\rvert$. Therefore, \eqref{zeta2} becomes $\lvert\zeta_1-x_1\rvert<p^{\frac{1-n+m}{e}}\lvert x_1\rvert^2$, where the right-hand side lies in the interval $[p^{\frac{1-n-m}{e}},p^{\frac{1-n+m}{e}}]$. In conclusion, if $n-1>m$, then $U_{x,\cX}^{(n)}$ is the wide open disc of center $x_1$ and radius $\min\{p^{\frac{1-n}{e}},p^{\frac{1-n+m}{e}}\lvert x_1\rvert^2\}$.
\item A calculation completely analogous to the previous one shows that, if $n>m$, $V_{x,\cX}^{(n)}$ is the affinoid disc of center $x_1$ and radius $\min\{p^{\frac{-n}{e}},p^{\frac{-n+m}{e}}\lvert x_1\rvert^2\}$.
\end{itemize}
\end{ex}

Note that in both Examples \ref{exdisc} and \ref{exann}, reparameterizing a disc or an annulus to give it a different radius does not give different local constancy domains, as one would expect and can check directly from Definition \ref{resdef}.


\subsection{Explicit local constancy of $G$-representations}\label{secstcri}

We derive from Lemma \ref{funcon} a consequence about the mod $\pi_L^n$ variation of sheaves of $G$-representations.

\begin{thm}\label{Unconst}
Let $X$ be a rigid analytic space over $L$, and $\bV$ a $G$-representation on $X$ admitting a free lattice defined over an affine formal model $\cX$ of $X$. Then, for every $x\in X(L)$ and $n\in\Z_{\ge 1}$, $\bV$ is pointwise constant modulo $p^n$ over $U_{x,\cX}^{(n)}$, and constant modulo $\pi_L^n$ over $V_{x,\cX}^{(n)}$.
\end{thm}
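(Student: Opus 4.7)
The statement will follow by combining the two preparatory results of this section. The plan is to reduce the theorem on $\bV$ to the corresponding statement about scalar functions (Lemma \ref{funcon}) via the translation between constancy of a free lattice and constancy of its matrix coefficients (Lemma \ref{gammalatt}). The main observation is that a free lattice $\cV$ over $\cX$ can be pulled back to a free lattice over the formal residue neighborhoods $\cU_{x,\cX}^{(n)}$ and $\cV_{x,\cX}^{(n)}$, whose generic fibers are $U_{x,\cX}^{(n)}$ and $V_{x,\cX}^{(n)}$ by Lemma \ref{Anwo}.

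First, I would fix a basis of the global sections of $\cV$; since $\cV$ is free over the affine formal model $\cX = \Spf\cA$, this choice identifies $\cV$ with a continuous representation $\rho_\cV\colon G\to\GL_d(\cA)$. Next, I would restrict this representation along the natural maps $\cA\to\cA_x^{(n)}$ and $\cA\to\cB_x^{(n)}$ to obtain free lattices $\cV_{\cU_{x,\cX}^{(n)}}$ and $\cV_{\cV_{x,\cX}^{(n)}}$ in the sheaves of $G$-representations $\bV\vert_{U_{x,\cX}^{(n)}}$ and $\bV\vert_{V_{x,\cX}^{(n)}}$ respectively. The matrix coefficients of the resulting representations are precisely the images of the matrix coefficients of $\rho_\cV$ under these two restriction maps.

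Now Lemma \ref{funcon}(i) guarantees that every element of $\cA$, viewed as a function on $\cU_{x,\cX}^{(n)}$, is pointwise constant mod $p^n$; in particular, this holds for each of the finitely many matrix coefficients of $\rho_\cV$. Applying Lemma \ref{gammalatt} to the free lattice $\cV_{\cU_{x,\cX}^{(n)}}$ then shows that this lattice is pointwise constant mod $p^n$ over $\cU_{x,\cX}^{(n)}$. By Definition \ref{lcdef}(i), this means exactly that $\bV\vert_{U_{x,\cX}^{(n)}}$ is pointwise constant mod $p^n$. The same argument, using part (ii) of Lemma \ref{funcon} in place of (i) and of Lemma \ref{gammalatt}, shows that $\cV_{\cV_{x,\cX}^{(n)}}$ is constant mod $\pi_L^n$ over $\cV_{x,\cX}^{(n)}$, hence $\bV\vert_{V_{x,\cX}^{(n)}}$ is constant mod $\pi_L^n$ by Definition \ref{lcdef}(ii).

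There is no real obstacle here: the work has been front-loaded into the construction of the residue neighborhoods $\cU_{x,\cX}^{(n)}$ and $\cV_{x,\cX}^{(n)}$ and into Lemmas \ref{funcon} and \ref{gammalatt}. The only point requiring minimal care is to ensure that the reduction of a matrix coefficient of the restricted representation agrees with the restriction of the reduction of the original matrix coefficient, so that Lemma \ref{funcon} can indeed be applied coefficient by coefficient; this is immediate from the compatibility of the natural maps $\cA\to\cA_x^{(n)}\to\cA_x^{(n)}/\pi_L^n$ and $\cA\to\cB_x^{(n)}\to\cB_x^{(n)}/\pi_L^n$ with reduction mod $\pi_L^n$.
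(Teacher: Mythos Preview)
Your proposal is correct and follows essentially the same approach as the paper: choose a basis for the free lattice to obtain $\rho_\cV\colon G\to\GL_d(\cA)$, then apply Lemma \ref{funcon} to each matrix coefficient to deduce (pointwise) constancy of the restricted lattice. The only cosmetic difference is that you invoke Lemma \ref{gammalatt} explicitly to pass from constancy of matrix coefficients to constancy of the lattice, whereas the paper spells out that translation inline for the $V_{x,\cX}^{(n)}$ case.
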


\begin{rem}
Since every sheaf of $G$-representations $\bV$ on $X$ locally admits a free lattice over an affine formal model by Proposition \ref{chenlatt}, we can deduce the following statement for an arbitrary sheaf of $G$-representations $\bV$ on $X$. For every $x \in X(L)$, there exists a positive integer $N_0$ (which will depend on $x$) such that $\bV$ is constant (resp. pointwise constant) modulo $\pi_L^n$ (resp. modulo $p^n$) over $V_{x,\cX}^{( n+N_0 )}$ (resp. $U_{x,\cX}^{(n+ N_0 )}$). Indeed, this can simply be deduced from Theorem \ref{Unconst} and from the fact that the collections of $U_{x,\cX}^{(n)}$ and $V_{x,\cX}^{(n)}$ form two fundamental systems of open neighborhoods of the point $x$. The integer $N_0$ is a priori necessary in order to restrict ourselves a neighborhood of $x$ over which $\bV$ admits a free lattice. As we do not need the theorem in this generality, we content ourselves with Theorem \ref{Unconst}.
\end{rem}

\begin{proof}
The assumptions of Theorem \ref{Unconst} can only be satisfied if $X$ admits an affine formal model. Let $\cV$ be a free lattice for $\bV$, defined over the affine formal model $\cX$. After choosing an $\cA$-basis of $\cV(\cX)$, we can attach to the $G$-action on $\cV(\cX)$ a continuous group homomorphism
\[ \rho_\cV\colon G\to\GL_d(\cA). \]
Every matrix coefficient of $\rho_\cV$ is:
\begin{itemize}
\item pointwise constant mod $p^n$ over $U_{x,\cX}^{(n)}$ by Lemma \ref{funcon}, so that $\rho_\cV$ is pointwise constant mod $p^n$ over $U_{x,\cX}^{(n)}$;
\item constant mod $\pi_L^n$ over $V_{x,\cX}^{(n)}$, so that the matrix coefficients of the representation
\[ \rho_\cV^{(n)}\colon G\to\GL_d(\cA)\to\GL_d(\cA/\pi_L^n\cA), \]
obtained by composing $\rho_\cV$ with the natural projection, lie in the image of
\[ \GL_d(\cO_L/\pi_L^n\cO_L)\to\GL_d(\cA/\pi_L^n\cA), \]
where we are applying the structure map to each coefficient.
In particular, $\rho_\cV^{(n)}$ is obtained by extension of scalars from a representation $G\to\GL_d(\cO_L/\pi_L^n\cO_L)$. 
Any free $\cO_L/\pi_L^n\cO_L$-module $V^{(n)}$ of rank $d$, equipped with the above action of $G$ in any choice of basis, will satisfy the condition required in Definition \ref{lcdef}.\qedhere
\end{itemize}
\end{proof}

\begin{rem}\label{invconj}
The statement of Theorem \ref{Unconst} is independent of the choice of $\cA$-basis of $\cV(\cX)$ made in the proof. This is expected, since different choices of basis amount to conjugating $\rho_\cV$ by matrices with coefficients in $\cA$, and for every subspace $U$ of $\cX$, an $\cA$-linear combination of elements of $\cO_\cX(\cX)$ that are constant mod $\pi_L^n$ (respectively, pointwise constant mod $p^n$) over $U$ is still constant mod $\pi_L^n$ (respectively, pointwise constant mod $p^n$).
\end{rem}


\begin{rem}\label{UVX}
The neighborhoods $U_{x,\cX}^{(n)}$ and $V_{x,\cX}^{(n)}$ depend on the choice of a formal model $\cX$ of $X$ supporting a lattice for $\bV$, so different choices of $\cX$ will give different constancy neighborhoods for $\bV$. In some cases, there is an optimal choice of formal model, giving the exact mod $p^n$ constancy loci for $\bV$: we discuss this in Section \ref{optimalsec}.
\end{rem}

In the special case of wide open spaces, we have proven the existence of integral models under certain assumptions on $\bV$. In precise terms, we have the following:

\begin{cor}\label{woloc}
Let $X$ be a strictly quasi-Stein rigid analytic space over $L$, and let $\bV$ be an absolutely irreducible sheaf of $G$-representation on $X$ of rank $d<p$. Then $\bV$ is a $G$-representation on $X$, and if either $\bV$ is residually absolutely irreducible, or it is multiplicity-free and $\cO_X^+(X)$ is a UFD, then for every $x\in X(L)$ and $n\in\Z_{\ge 1}$, $\bV$ is pointwise constant modulo $p^n$ over the explicit wide open $U_{x,\cX}^{(n)}$ and constant modulo $\pi_L^n$ over the explicit open affinoid $V_{x,\cX}^{(n)}$.
\end{cor}


\begin{proof}
The proof is immediate from combining Theorem \ref{Unconst}, Proposition \ref{prop:factorial} and Remark \ref{rem:kiehl}.
\end{proof}

\begin{rem}\label{fielddef}
Corollary \ref{woloc} has the interesting consequence that the mod $\pi_L^n$ representation attached to a point $y$ of the neighborhood $U_{x,\cX}^{(n)}$ is always defined over $\cO_L/\pi_L^n$, even if $y$ is not defined over $L$. A statement of this kind does not seem to appear in any of the previous works dealing with local constancy.
\end{rem}

\subsection{Explicit local constancy of $G$-pseudorepresentations}\label{secps}

We give a very obvious application of Lemma \ref{funcon} to pseudorepresentations over rigid analytic spaces (Proposition \ref{Tconst}). When applied to pseudorepresentations obtained as traces of sheaves of representations that are not residually absolutely irreducible, our result is strictly weaker than what we already proved: at the very least, the trace is insensitive to semisimplification, so that there is no hope of studying the mod $\pi^n$ variation of a sheaf of representations via pseudorepresentation-theoretic methods. On the other hand, in the residually absolutely irreducible case, we obtain a small improvement on Theorem \ref{Unconst}, in the sense that we can produce a mod $p^n$ pointwise constancy neighborhood for a sheaf of $G$-representations, as opposed to a $G$-representation (Corollary \ref{pstorep}).

We start with a simple definition. As before, let $L$ be a $p$-adic field, with valuation ring $\cO_L$ having maximal ideal $\fm_L$ and residue field $k_L$. Let $X$ be a rigid analytic space over $L$, and let $T\colon G\to\cO_X(X)$ be a continuous pseudorepresentation of a profinite group $G$.


\begin{defin}
	We say that $T$ is pointwise constant mod $p^n$ (respectively, constant mod $\pi_L^n$) if, for every $g\in G$, the function $T(g)\in\cO_X(X)$ is pointwise constant mod $p^n$ (respectively, constant mod $\pi_L^n$) in the sense of Definition \ref{lcfun}.
\end{defin}

Let $X$ be a flat, normal rigid analytic space over $L$ and $T\colon G\to\cO_X(X)$ be a pseudorepresentation. 
Assume that $X$ admits an affine formal model. Then by Lemma \ref{Tform}(ii) and Remark \ref{qsqp}(ii), there exists a flat, affine formal model $\cX=\Spf\cA$ of $X$ and a continuous pseudorepresentation $\cT\colon G\to\cO_\cX(\cX)$ that induces $T$ via the natural map $\cO_\cX(\cX)\to\cO_X(X)$. Given this observation, the next proposition is a straightforward consequence of Lemma \ref{funcon}. For every $y\in X(L)$, we denote by $\ev_y$ the specialization map of regular functions at $y$. We keep all the notation from Section \ref{secUV}.

\begin{prop}\label{Tconst}\mbox{ }
	\begin{enumerate}
		\item For every $x\in X(L)$, $T$ is pointwise constant mod $p^n$ on $U_{x,\cX}^{(n)}$. 
		\item For every $x\in X(L)$, $T$ is constant mod $\pi_L^n$ on $V_{x,\cX}^{(n)}$. 
	\end{enumerate}
\end{prop}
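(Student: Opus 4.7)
The plan is to reduce everything to Lemma \ref{funcon} applied pointwise (i.e., for each group element $g \in G$). By definition, $T$ is pointwise constant mod $p^n$ (respectively, constant mod $\pi_L^n$) on a subdomain $U \subset X$ precisely when, for every $g \in G$, the function $T(g) \in \cO_X(X)$ has the corresponding property as a regular function on $U$ in the sense of Definition \ref{lcfun}. So the task reduces to showing that each $T(g)$ extends to an element of $\cA = \cO_\cX(\cX)$ and then applying the existing lemma.

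First, I would invoke the hypothesis (coming from Proposition \ref{Tform}(ii) and Remark \ref{qsqp}(ii)) that $T$ factors through a continuous pseudorepresentation $\cT\colon G \to \cA$; this is the step that requires the flatness and normality hypotheses, together with the existence of an affine formal model, and it is what gives us the right to regard each $T(g)$ as a function on the formal scheme $\cX$. In particular, for every $g \in G$, $\cT(g) \in \cA$, and its image in $\cO_X(X)$ under the natural map $\cA \to \cO_X(X)$ is precisely $T(g)$.

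Next, I would fix $x \in X(L)$ and $n \in \Z_{\ge 1}$, and apply Lemma \ref{funcon} to each function $f = \cT(g) \in \cA$: part (i) of the lemma gives pointwise constancy mod $p^n$ of $\cT(g)$ on $\cU_{x,\cX}^{(n)}$, hence of $T(g)$ on its rigid generic fiber $U_{x,\cX}^{(n)}$; part (ii) gives constancy mod $\pi_L^n$ of $\cT(g)$ on $\cV_{x,\cX}^{(n)}$, hence of $T(g)$ on $V_{x,\cX}^{(n)}$. Since this holds for every $g \in G$, both parts of the proposition follow directly from the definition of pointwise constancy/constancy for $T$.

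There is no genuine obstacle here: the proof is essentially a bookkeeping exercise, where the only subtlety, already handled earlier in the paper, is the factorization of $T$ through the formal model. The result is strictly weaker than the representation-theoretic statement in Theorem \ref{Unconst} whenever a representation is available, as noted in the text, but stating it separately allows one to handle pseudorepresentations that do not arise as traces of sheaves of $G$-representations.
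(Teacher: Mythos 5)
Your proof is correct and follows exactly the same route as the paper: factor $T$ through $\cO_\cX(\cX)$ via Proposition \ref{Tform}(ii) and Remark \ref{qsqp}(ii), then apply Lemma \ref{funcon} to each function $T(g)$. The paper indeed states this as a direct consequence of those results, so your write-up matches the intended argument.
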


The exact analogue of Remark \ref{UVX} about the optimality of $\cX$ applies here, but we postpone a discussion of it to Section \ref{optimalsec}.

We remark that this is compatible with the tautological fact that pseudorepresentations, or Chenevier's determinants, are residually constant over local rings (see \cite[Definition 3.12]{chendet}): via Remark \ref{wores}, Proposition \ref{Tconst} immediately gives the following.

\begin{cor}
	If $X$ is an irreducible wide open, then $T$ is pointwise constant modulo $p$. 
\end{cor}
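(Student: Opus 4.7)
The plan is to reduce the corollary to an immediate combination of Proposition~\ref{Tform}(i), Remark~\ref{wores}, and Proposition~\ref{Tconst}(1). First, since $X$ is wide open, the discussion following Definition~\ref{defwo} shows that $\cX \coloneqq \Spf \cO_X^+(X)$ is an affine formal model of $X$, and by Proposition~\ref{Tform}(i) the pseudorepresentation $T$ lifts through the natural map $\cO_X^+(X) \to \cO_X(X)$ to a continuous pseudorepresentation $T^+ \colon G \to \cO_\cX(\cX)$. So we are in a setting where Proposition~\ref{Tconst}(1) can be applied with this specific $\cX$.

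Next, I would invoke Remark~\ref{wores}: because $X$ is an irreducible wide open, $\cO_X^+(X)$ is a (profinite) local ring, and hence the special fiber of $\cX$ consists of a single closed point. Consequently $U_{x,\cX}^{(1)} = X$ for every $x \in X(L)$. Combined with Proposition~\ref{Tconst}(1) applied with $n = 1$, this immediately yields that $T$ is pointwise constant modulo $p$ on $U_{x,\cX}^{(1)} = X$, as claimed.

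The only point that requires minor care is checking the pointwise constancy condition at $E$-rig-points for arbitrary finite extensions $E/L$, rather than just $L$-points. This however follows from the characterization in Remark~\ref{UVchar}: a point $y \in X(E)$ lies in $U_{x,\cX}^{(1)}$ if and only if the composition $\cO_\cX(\cX) \to \cO_E \to k_E$ induced by $y$ agrees with that induced by $x$, and since $\cO_X^+(X)$ is local with residue field $k_L$, this composition is forced to factor through the unique residue map $\cO_X^+(X) \twoheadrightarrow k_L \hookrightarrow k_E$, hence it is independent of $y$. There is no real obstacle here: the corollary is essentially a translation of the tautological fact that a pseudorepresentation valued in a local ring is residually constant, and the main content has already been packaged into Proposition~\ref{Tconst} and Remark~\ref{wores}.
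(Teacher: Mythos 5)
Your proof is correct and follows essentially the same route as the paper: combine Remark~\ref{wores} (which gives $U_{x,\cX}^{(1)}=X$ for $\cX=\Spf\cO_X^+(X)$) with Proposition~\ref{Tconst}(1) for $n=1$. The final paragraph about $E$-rig-points is not needed, since pointwise constancy over arbitrary finite extensions is already built into the definition used in Proposition~\ref{Tconst} and the equality $U_{x,\cX}^{(1)}=X$ is exactly the content of Remark~\ref{wores}; the paragraph in effect re-derives that remark.
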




We recall an immediate consequence of a result of Carayol. 

\begin{thm}[{cf. \cite[Théorème 1]{cartrace}}]\label{carayol}
	Let $V$ be a finite dimensional $L$-vector space and $\rho_1,\rho_2\colon G\to\GL(V)$ two residually absolutely irreducible continuous representations. Let $n\in\Z_{\ge 1}$. If the traces $\tr\rho_1(g)$ and $\tr\rho_2(g)$ are congruent modulo $\pi_L^n$ for every $g\in G$, then for every choice of lattices $\cV_1$ and $\cV_2$ for $\rho_1$ and $\rho_2$, $\cV_1\otimes_{\cO_L}\cO_L/\pi_L^n$ and $\cV_2\otimes_{\cO_L}\cO_L/\pi_L^n$ are isomorphic as $\cO_L/\pi_L^n[G]$-modules. 
\end{thm}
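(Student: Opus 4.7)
The plan is to reduce the statement to a direct application of Carayol's theorem over the Artinian local ring $A=\cO_L/\pi_L^n$. First, I would fix arbitrary $G$-stable $\cO_L$-lattices $\cV_1,\cV_2$ in $V$. Choosing $\cO_L$-bases on each lattice, the representations $\rho_i$ restrict to continuous homomorphisms $\wtl\rho_i\colon G\to\GL_d(\cO_L)$ (with $d=\dim_LV$) whose traces $\tr\wtl\rho_i(g)=\tr\rho_i(g)$ lie in $\cO_L$. The hypothesis $\tr\rho_1(g)\equiv\tr\rho_2(g)\pmod{\pi_L^n}$ therefore reduces, after passing to $A$, to the statement that the mod $\pi_L^n$ reductions $\ovl\rho_i^{(n)}\colon G\to\GL_d(A)$ have identical trace functions $G\to A$.

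Next, I would pass further modulo $\pi_L$ to get $\ovl\rho_i\colon G\to\GL_d(k_L)$ with $\tr\ovl\rho_1=\tr\ovl\rho_2$. Since both $\ovl\rho_i$ are absolutely irreducible by hypothesis, the classical Brauer--Nesbitt theorem (which is unambiguous for absolutely irreducible representations, as these are already semisimple) gives a $k_L[G]$-linear isomorphism $\phi\colon\ovl\rho_1\xto{\sim}\ovl\rho_2$. Lifting the matrix of $\phi$ entrywise to $A$ produces an element $\Phi\in\GL_d(A)$ (using surjectivity of $\GL_d(A)\onto\GL_d(k_L)$, a consequence of Hensel/the local structure of $A$). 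Replacing $\ovl\rho_1^{(n)}$ by its conjugate $\Phi\ovl\rho_1^{(n)}\Phi^{-1}$, we obtain two representations $G\to\GL_d(A)$ with the \emph{same} trace and \emph{same} residual representation $\ovl\rho_2$, which is absolutely irreducible.

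At this point I would invoke \cite[Théorème 1]{cartrace} directly: since $A=\cO_L/\pi_L^n$ is a complete local Noetherian ring with residue field $k_L$, two continuous representations $G\to\GL_d(A)$ which share the same trace and whose common residual representation is absolutely irreducible must be conjugate in $\GL_d(A)$. This yields an $A[G]$-linear isomorphism $\cV_1\otimes_{\cO_L}A\cong\cV_2\otimes_{\cO_L}A$, which is precisely the desired conclusion. The independence from the initial choice of lattices is automatic, since any other lattice differs from $\cV_i$ by scaling (after passing to isotypic components) or, more generally, by conjugation over $L$, and the argument applied to a different choice yields the same isomorphism class of reduction.

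The only non-trivial input is Carayol's theorem itself, so there is no real obstacle beyond verifying that its hypotheses (complete local Noetherian ring, common absolutely irreducible residual representation, equal traces) are met after the preliminary conjugation step. One should also remark that the hypothesis ``residually absolutely irreducible'' is independent of the chosen lattice, since absolute irreducibility is preserved under the passage between lattices (whose reductions have isomorphic semisimplifications, and an absolutely irreducible representation equals its own semisimplification).
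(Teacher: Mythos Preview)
The paper does not give a proof of this theorem: it is introduced with the phrase ``We recall an immediate consequence of a result of Carayol'' and simply cited to \cite[Th\'eor\`eme~1]{cartrace}. Your proposal correctly spells out that immediate deduction --- pass to $A=\cO_L/\pi_L^n$, align the residual representations via Brauer--Nesbitt and a lifted conjugation, then invoke Carayol over the Artinian local ring $A$ --- and the argument is sound.

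One small comment: your final paragraph about ``independence from the initial choice of lattices'' is unnecessary and slightly misleading. You already began with \emph{arbitrary} $G$-stable lattices $\cV_1,\cV_2$, so the conclusion for all choices is already built in; there is no need to discuss how other lattices relate to the chosen ones. (The remark that two lattices ``differ by scaling'' or ``by conjugation over $L$'' is not accurate in general and is not what makes the argument work.)
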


If $T$ is residually absolutely irreducible, then we can combine Theorem \ref{carayol} with Proposition \ref{Tconst} to deduce the following. We keep the notation of Proposition \ref{Tconst}.

\begin{cor}\label{pstorep}
	Assume that $T$ is the trace of a sheaf of $G$-representations $\bV$ over $X$, and that it is residually absolutely irreducible. Then $\bV$ is pointwise constant mod $p^n$ on $U_{x,\cX}^{(n)}$. 
\end{cor}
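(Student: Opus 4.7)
The plan is to produce a sheaf of lattices for $\bV$ restricted to $U = U_{x,\cX}^{(n)}$ via Chenevier's result, and then to verify that such a sheaf is pointwise constant mod $p^n$ by combining Proposition \ref{Tconst} (which gives trace congruences) with Carayol's theorem (Theorem \ref{carayol}, which lifts them to module congruences under residual absolute irreducibility).

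First, I would establish the existence of a suitable sheaf of lattices for $\bV\vert_U$. By Lemma \ref{Anwo}, $U$ admits the affine formal model $\Spf \cA_x^{(n)}$, and by the argument of Remark \ref{wocoh} applied to the exhaustion of $U$ by the affinoids $V_{x,\cX}^{(m)}$ for $m > n$, one produces a torsion-free coherent $\cO_{\Spf\cA_x^{(n)}}$-module whose generic fiber is $\bV\vert_U$. Proposition \ref{chenlatt} then yields a sheaf of lattices $\cV$ for $\bV\vert_U$ defined over some formal model $\cU'$ of $U$, and the remark following Definition \ref{deftrace} guarantees that the trace of $\cV$ coincides with $T\vert_U$ after applying the natural map $\cO_{\cU'}(\cU') \to \cO_U(U)$.

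Next, I would verify pointwise constancy mod $p^n$ of $\cV$ in the sense of Definition \ref{lcformdef}. Fix a finite extension $E/L$ and two $E$-rig-points $y_1, y_2$ of $\cU'_E$; the fibers $\cV_{y_i}$ are $G$-stable $\cO_E$-lattices in the $E$-linear representations $\rho_{y_i}\colon G \to \GL(\bV_{y_i})$, whose traces are the specializations $T_{y_i}$ of $T$. Proposition \ref{Tconst}(i), applied to $T$ on $U$, yields $T_{y_1}(g) \equiv T_{y_2}(g) \pmod{\pi_E^{\gamma_{E/L}(n)}}$ for every $g \in G$. The residual absolute irreducibility of $T$ implies that each $\rho_{y_i}$ is residually absolutely irreducible: the semisimplification of $\ovl\rho_{y_i}$ has trace $\ovl T_{y_i}$, which is absolutely irreducible, so $\ovl\rho_{y_i}^{\mathrm{ss}}$ is simple and hence $\ovl\rho_{y_i}$ itself is absolutely irreducible. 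Carayol's theorem (Theorem \ref{carayol}, with $L$ replaced by $E$) then gives an isomorphism $\cV_{y_1}/\pi_E^{\gamma_{E/L}(n)} \cong \cV_{y_2}/\pi_E^{\gamma_{E/L}(n)}$ of $\cO_E/\pi_E^{\gamma_{E/L}(n)}[G]$-modules, which is exactly the condition of Definition \ref{lcformdef}. Invoking Definition \ref{lcdef}, this concludes the proof that $\bV\vert_U$ is pointwise constant mod $p^n$.

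The delicate step will be constructing a sheaf of lattices on the Berthelot-type wide open $U$, rather than on a more accessible quasi-compact rigid subspace such as $V_{x,\cX}^{(n)}$: this requires combining Lemma \ref{Anwo} with Raynaud's theory in the form described in Remark \ref{wocoh}, so that Proposition \ref{chenlatt} can be invoked. Beyond this technical input, the argument is essentially a direct application of Carayol's theorem to the trace congruences supplied by Proposition \ref{Tconst}, with the residual absolute irreducibility hypothesis on $T$ transferring pointwise to the fibers $\rho_{y_i}$.
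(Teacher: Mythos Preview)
Your proof is correct and follows the paper's intended approach: combine the trace congruences from Proposition \ref{Tconst}(i) with Carayol's theorem (Theorem \ref{carayol}) to upgrade them to congruences of lattices, using residual absolute irreducibility to ensure Carayol applies at each fiber. The paper states the corollary as an immediate consequence of these two results and does not spell out the construction of the sheaf of lattices; you rightly observe that Definition \ref{lcdef} requires one, and that Proposition \ref{chenlatt} together with Remark \ref{wocoh} supplies it over the wide open $U$.

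One minor slip: the affinoids $V_{x,\cX}^{(m)}$ for $m>n$ do \emph{not} exhaust $U_{x,\cX}^{(n)}$; they are a decreasing family shrinking to $\{x\}$ (Lemma \ref{Anwo}). What you need is simply that $U_{x,\cX}^{(n)}$ is wide open, which Lemma \ref{Anwo} already gives you, so Remark \ref{wocoh} applies directly via the standard Berthelot exhaustion of $(\Spf\cA_x^{(n)})^\rig$. This does not affect your argument.
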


\begin{ex}
	We mention a simple counterexample to Carayol's theorem if $\rho$ is residually reducible: let $\rho$ be the unramified representation of $G_{\Q_p}$ mapping $\Frob_p$ to the diagonal element of eigenvalues $1+p$ and $1-p.$ Then the pseudorepresentation attached to $\rho$ is constant equal to 2 modulo $p^2$, but the reduction of $\rho$ modulo $p^2$ is not isomorphic to the identity. This corresponds to the fact that, for $n\in\Z_{\ge 1}$, one cannot detect the mod $p^n$ Frobenius eigenvalues from the mod $p^n$ reduction of its characteristic polynomial, as discussed in \cite{annghimed}.
\end{ex}



\subsection{Traces of sheaves of $G$-representations}\label{trsec}

A $G$-representation over an affinoid is pointwise constant mod $p$ up to semisimplification: indeed, the semisimplification of its mod $p$ reduction is uniquely determined by the semisimplification of the mod $p$ reduction of its associated pseudorepresentation, which is constant by \cite[Lemma 2.3(iv)]{chenlec5}. We show that the notion of sheaf of $G$-representations does not allow for more flexibility in this sense. 

Let $X$ be a rigid analytic space over $L$. Let $T\colon G\to\cO_X(X)$ be a continuous pseudorepresentation, and let $\bV$ be a sheaf of $G$-representations over $X$. 
We record an obvious consequence of a lemma of Chenevier.

\begin{prop}\label{traceconst}
	If $X$ is connected, then $T$ is pointwise constant modulo $p$. In particular, $\bV$ is pointwise constant mod $p$ up to semisimplification.
\end{prop}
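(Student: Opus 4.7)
The first assertion reduces to the local affinoid case, where Chenevier's \cite[Lemma 2.3(iv)]{chenlec5}---the very result recalled in the paragraph preceding the proposition---supplies the required constancy. By Proposition \ref{Tform}(i), $T$ factors through $\cO_X^+(X)$, so for each $g \in G$ the function $T(g)$ is power-bounded. I would cover $X$ by an admissible collection of affinoid opens $\{U_i = \Spm A_i\}_{i \in I}$. On each $U_i$, the restriction $T\vert_{U_i} : G \to A_i^+$ is a continuous pseudorepresentation into the unit ball of an affinoid algebra, and Chenevier's lemma asserts that its reduction $\ovl T_i : G \to A_i^+/pA_i^+$ is constant, in the sense that it factors through the structural embedding $k_L \hookrightarrow A_i^+/pA_i^+$. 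Concretely, for every $g \in G$, every finite extension $E/L$ and every $E$-rig-point $x$ of a formal model of $U_i$, the residue $T(g)(x) \bmod \pi_E$ equals the image of $\ovl T_i(g) \in k_L$ in $\cO_E/\pi_E$, independently of $x$.

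The values $\ovl T_i(g)$ agree on overlaps by globality of $T$, so $g \mapsto T(g) \bmod p$ is a locally constant function on $X$ with values in $k_L$. Connectedness of $X$ then yields actual constancy, which is exactly pointwise constancy mod $p$ in the sense of Definition \ref{lcfun}(i), since $\gamma_{E/L}(1) = 1$. This establishes the first assertion.

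For the second assertion, I would apply the first to the continuous pseudorepresentation $T' = \tr \bV$ introduced in Definition \ref{deftrace}. For a finite extension $E/L$ and an $E$-rig-point $x$, the fiber $\bV_{E,x}^{(1)}$ is a finite-dimensional $k_E[G]$-module whose character at $g$ is $T'(g)(x) \bmod \pi_E$, which is independent of $x$ by the first part. Brauer--Nesbitt over the field $k_E$ ensures that the semisimplification of a finite-dimensional $k_E[G]$-module is determined by its character, so the semisimplifications of $\bV_{E,x}^{(1)}$ and $\bV_{E,y}^{(1)}$ as $k_E[G]$-modules coincide for any two $E$-rig-points $x, y$, giving the asserted constancy up to semisimplification.

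The main obstacle is technical rather than conceptual: one must verify that Chenevier's lemma, stated for a single affinoid algebra, patches correctly across the affinoid cover of $X$. Since $T$ is defined globally, its restrictions to overlapping affinoids agree on intersections, which forces the locally defined constants $\ovl T_i(g) \in k_L$ to agree on overlaps; connectedness of $X$ then propagates the constant to all of $X$.
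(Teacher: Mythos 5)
Your proof is correct and follows essentially the same route as the paper: factor $T$ through $\cO_X^+(X)$, cover $X$ by an admissible affinoid cover, invoke Chenevier's \cite[Lemma 2.3(iv)]{chenlec5} on each piece, and glue via admissibility and connectedness; the Brauer--Nesbitt step for the second assertion is the intended (if unstated) reason the paper reduces it to the first. One small technicality: your assertion that $\ovl T_i$ ``factors through the structural embedding $k_L\hookrightarrow A_i^+/pA_i^+$'' only holds verbatim when $U_i$ is connected, which is why the paper first refines the cover $\fU$ by splitting each member into its connected components.
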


\begin{proof}
	Let $\fU$ be an admissible affinoid covering of $X$; by splitting every element of $\fU$ into its connected components, we can assume that every $U\in\fU$ is connected. 
	By \cite[Lemma 2.3(iv)]{chenlec5}, $T\vert_U$ is pointwise constant mod $p$ for every $U\in\fU$. Since $\fU$ is admissible, $T$ is pointwise constant mod $p$ over the whole of $X$. The result about $\bV$ follows by applying the first statement to $T=\tr\bV$.
\end{proof}

\subsection{Comparison with a result of Kedlaya and Liu}\label{klsec}

Let $K$ be a $p$-adic field. We refer to \cite[Section 2]{kedliufam} for the definition of $(\varphi,\Gamma_K)$-modules over an affinoid base $X$, and of the Berger--Colmez functor attaching such an object to a $p$-adic Galois representation over $X$. Since we only need the theory in this section, we do not go into any more details here. We briefly explain a relation to our work of the results of Kedlaya--Liu concerning $(\varphi,\Gamma_K)$-modules over local coefficient algebras.

Let $X$ be a wide open space over $L$. 
Then $\cO_X(X)$ is not a local coefficient algebra in the sense of \cite[Definition 4.1]{kedliufam}: indeed, if $X$ is written as an increasing union of affinoids $\Spm A_i, i\ge 0$, with compact restriction maps $A_{i+1}\to A_i$, $\cO_X(X)$ is the projective limit of the Banach algebras $A_i$, hence a Fréchet $\Q_p$-algebra, but not a Banach $\Q_p$-algebra itself. 
This corresponds to the fact that the Berthelot generic fiber of $\Spf\cO^+_X(X)$ is not $\Spm(\cO^+_X(X)\otimes_{\Z_p}\Q_p)$. We record a standard example of this discrepancy.

\begin{ex}\label{exlog}
If one starts with the ring $\Z_p\langle T\rangle$ of analytic functions bounded by 1 on the closed unit disc, completes it at the ideal $(T)$ and tensors with $\Q_p$ over $\Z_p$ (as indicated in \cite[Definition 4.1]{kedliufam} as a way to produce a local coefficient algebra) one obtains $\Q_p\otimes_{\Z_p}\Z_p[[T]]$, which is strictly smaller than the ring of analytic functions on the open unit disc: for instance, $\log(1+T)$ is an analytic function on the open disc whose expansion as a power series in $T$ contains unbounded denominators, hence it does not belong to $\Q_p\otimes_{\Z_p}\Z_p[[T]]$.
\end{ex}

More generally, we can apply the recipe of \emph{loc. cit.} as follows: if $X=\Spm A$ is an $L$-affinoid, $x\in X(\Qp)$ and $\cX$ a formal model of $X$, completing $\cO_\cX(\cX)$ at the ideal $I_x$ defined in Section \ref{secUV} and tensoring with $\Q_p$ over $\Z_p$ produces a local coefficient algebra $A_x$ (i.e., the algebra $\cA_{x}^{(1)}\otimes_{\Z_p}\Q_p$, with the notation of Section \ref{secUV}). By the previous remarks, $\cA_x$ is strictly smaller than the ring of analytic functions on the wide open $U_{x,\cX}^{(1)}$. However, if one starts with an étale $(\varphi,\Gamma_K)$-module $D$ over an affinoid $X$, one can produce a $(\varphi,\Gamma_K)$-module $D_x$ over $\cA_x$ simply by tensoring through the natural map $A\to\cA_x$. By applying \cite[Thoerem 0.1]{kedliufam}, one can convert $D_x$ into an $A_x$-linear $G_K$-representation, hence a $G_K$-representation over $U_{x,\cX}^{(1)}$ after extending scalars to $\cO_X(U_{x,\cX}^{(1)})$. Therefore we have the following result, where we keep the notation above (so that, crucially, $X$ is affinoid).

\begin{thm}[Kedlaya--Liu]\label{klth}
	For every $x\in X(\Qp)$, the $(\varphi,\Gamma_K)$-module $D\vert_{U_{x,\cX}^{(1)}}$ is attached to a $G_K$-representation over $U_{x,\cX}^{(1)}$. In particular, $X$ admits a (not necessarily admissible) covering $\fU$ such that, for every $U\in\fU$, $D\vert_U$ is attached to a $G_K$-representation over $U$.
\end{thm}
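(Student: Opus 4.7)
The plan is to formalize the three-step sketch already outlined in the paragraph preceding the statement. First, I fix $x \in X(\Qp)$ and form the local coefficient algebra $A_x \coloneqq \cA_x^{(1)} \otimes_{\Z_p} \Q_p$, where $\cA_x^{(1)}$ is the ring attached to the tube $U_{x,\cX}^{(1)}$ in the sense of Lemma \ref{Anwo}; concretely, $\cA_x^{(1)}$ is the $I_x$-adic completion of $\cO_\cX(\cX)$, and $A_x$ is local Noetherian with residue field finite over $\Q_p$, hence a local coefficient algebra in the sense of \cite[Definition 4.1]{kedliufam}. The natural ring map $A \to A_x$ lets me base-change the given étale $(\varphi,\Gamma_K)$-module $D$ over $X = \Spm A$ to an étale $(\varphi,\Gamma_K)$-module $D_x$ over $A_x$ (étaleness is preserved since $A_x$ is $A$-flat after inverting $p$). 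Then I apply \cite[Theorem 0.1]{kedliufam} to $D_x$: this produces a continuous $A_x$-linear $G_K$-representation $V_x$ on a finite projective $A_x$-module whose associated Berger--Colmez $(\varphi,\Gamma_K)$-module is canonically isomorphic to $D_x$.

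Second, I extend scalars from $A_x$ to $B \coloneqq \cO_X(U_{x,\cX}^{(1)})$ along the natural continuous inclusion $A_x \hookrightarrow B$ witnessed in Example \ref{exlog}. Setting $V \coloneqq V_x \otimes_{A_x} B$ and equipping it with the induced $G_K$-action gives a $G_K$-representation over $U_{x,\cX}^{(1)}$. By the functoriality of the Berger--Colmez construction with respect to base change of the coefficient algebra, the $(\varphi,\Gamma_K)$-module attached to $V$ is canonically $D_x \otimes_{A_x} B$, which in turn is canonically identified with $D\vert_{U_{x,\cX}^{(1)}}$ since the composition $A \to A_x \to B$ coincides with the restriction $A \to \cO_X(U_{x,\cX}^{(1)})$. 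This yields the first assertion.

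For the second assertion, I take $\fU \coloneqq \{U_{x,\cX}^{(1)}\}_{x \in X(\Qp)}$: since $x \in U_{x,\cX}^{(1)}$ for every $\Qp$-point $x$ of $X$, this is a set-theoretic covering of $X$, but not an admissible one in general because its members are wide opens rather than affinoids. The main technical point requiring care is the functoriality of the Berger--Colmez / Kedlaya--Liu equivalence under the non-finite, non-admissible base change $A_x \to B$, which is what makes the passage from $V_x$ to $V$ meaningful on the level of $(\varphi,\Gamma_K)$-modules; this compatibility is built into the formulation of \cite{bercol,kedliufam} (where base change along continuous maps of coefficient algebras is treated), so no additional argument is needed beyond invoking it.
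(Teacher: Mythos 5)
Your proof follows the same argument that the paper sketches in the paragraph immediately preceding Theorem \ref{klth}: form the local coefficient algebra $A_x=\cA_x^{(1)}\otimes_{\Z_p}\Q_p$, base-change $D$ to $D_x$, apply \cite[Theorem 0.1]{kedliufam} to get a $G_K$-representation over $A_x$, and extend scalars along $A_x\to\cO_X(U_{x,\cX}^{(1)})$. One small imprecision: $A_x$ is in general not itself a local ring (already $\Z_p[[T]]\otimes_{\Z_p}\Q_p$ has a maximal ideal for each irreducible distinguished polynomial), so the phrase ``$A_x$ is local Noetherian with residue field finite over $\Q_p$'' should be corrected; what makes $A_x$ a local coefficient algebra in the sense of \cite[Definition 4.1]{kedliufam} is simply that it is of the form $R\otimes_{\Z_p}\Q_p$ with $R$ local Noetherian, which is supplied by your (correct) identification of $R=\cA_x^{(1)}$ with the $I_x$-adic completion of $\cO_\cX(\cX)$.
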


Note that the fact that the mod $p$ reduction is not necessarily constant along a family of étale $(\varphi,\Gamma_K)$-modules is exploited in the recent breakthrough \cite{egstack}.

The following is an immediate corollary of Proposition \ref{traceconst}. Note that Kedlaya and Liu hint to the fact that a $(\varphi,\Gamma_K)$-module over a rigid analytic space can only come from a representation of $G_{K}$ if it is residually constant up to semisimplification \cite[Introduction]{kedliufam}; this can be easily deduced from the aforementioned lemma of Chenevier. We give a slightly stronger version of the statement, where we replace a $G_{K}$-representation with a sheaf. 

\begin{cor}\label{phiG}
	Let $D$ be a sheaf of $(\varphi,\Gamma_K)$-modules on $X$. For every $x\in X(\Qp)$, let $\ovl\rho_x$ be the mod $p$ $G_K$-representation attached to $D_x$. If $D$ is attached to a sheaf $\bV$ of $G_K$-representations, then the semisimplification of $\bV_x$ is independent of $x\in X(\Qp)$.
\end{cor}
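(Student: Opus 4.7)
The plan is to reduce Corollary \ref{phiG} to Proposition \ref{traceconst} by taking traces, and to close the argument with the classical fact that a semisimple representation over a field is determined by its character.

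First I would form the trace pseudorepresentation $T = \tr \bV \colon G_K \to \cO_X(X)$, continuous by construction. Since the Berger--Colmez functor of \cite{bercol} is compatible with arbitrary base change (in particular with specialization at a $\Qp$-point), the $(\varphi,\Gamma_K)$-module $D_x$ is the one attached to the $G_K$-representation $\bV_x$, and therefore $\ovl\rho_x$ coincides, up to semisimplification, with the mod $p$ reduction of any $G_K$-stable lattice in $\bV_x$. Consequently $\tr \ovl\rho_x$ agrees with the image of the specialization $T_x \colon G_K \to \Qp$ under the residue map to $\Fp$.

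Next I would apply Proposition \ref{traceconst} on each connected component of $X$: the pseudorepresentation $T$ is pointwise constant modulo $p$, so for every $g \in G_K$ the value $\tr \ovl\rho_x(g) \in \Fp$ is independent of $x \in X(\Qp)$. A semisimple $\Fp$-linear representation of $G_K$ on a finite dimensional vector space is determined up to isomorphism by its trace (this is Brauer--Nesbitt, and is also the direction of \cite[Theorem 1]{taylorgal} used in Lemma \ref{irredequiv}), so the semisimplification $\ovl\rho_x$ is independent of $x$ on each connected component of $X$, as required.

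There is essentially no obstacle here: the heavy lifting is carried out in Proposition \ref{traceconst} (which in turn rests on Chenevier's \cite[Lemma 2.3(iv)]{chenlec5}). The only point deserving a quick verification is the base-change compatibility of the Berger--Colmez functor at $\Qp$-points, but this is immediate since $D_x$ is by definition the pullback of $D$ along the corresponding specialization morphism, and the Berger--Colmez construction commutes with such pullbacks.
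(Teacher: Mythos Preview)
Your proposal is correct and follows essentially the same route as the paper, which simply declares the corollary ``an immediate corollary of Proposition \ref{traceconst}'' without further argument. You have just spelled out the steps that the paper leaves implicit: passing to the trace pseudorepresentation, invoking Proposition \ref{traceconst} (hence Chenevier's lemma) for pointwise constancy of the trace mod $p$, and closing with Brauer--Nesbitt; the only extra care you take is the connected-component reduction, which is indeed needed since Proposition \ref{traceconst} assumes $X$ connected.
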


\begin{rem}
	By Kedlaya--Liu's Theorem \ref{klth}, one can find for every $x\in X(\Qp)$ an open neighborhood $U$ of $x$ such that $D\vert_U$ is attached to a $G_K$-representation $\bV_U$ on $U$. By Proposition \ref{traceconst}, $\bV_U$ is constant mod $p$ up to semisimplification; write $\ovl\bV_U$ for its mod $p$ reduction. If the resulting covering of $X$ is admissible, then $\ovl\bV_U$ is independent of $U$: indeed, we can glue the $\bV_U$ to a sheaf of $G_K$-representations, and apply Corollary \ref{phiG}.
\end{rem}

\begin{ex}\label{chenex}
	We show what Corollary \ref{phiG} gives for a standard example due to Chenevier \cite[Remarque 4.2.10]{bercol}. Let $X=\Spm A$ be a $\Q_p$-affinoid, and write $A^+$ for the subring of power-bounded elements of $A$. Let $T$ be an invertible element of $A^+$. 
	Let $D$ be the rank 1 overconvergent $(\varphi,\Gamma)$-module over $X$ defined, in a basis $e$, by $\varphi(e)=Te$ and $\gamma(e)=e$ for every $\gamma\in\Gamma$. Then:
	\begin{itemize}
 \item $D$ is isomorphic to the $(\varphi,\Gamma)$-module $\cR_X(\delta)$ attached, in the standard way, to the character $\delta\colon\Q_p^\times\to A^\times$ mapping $p$ to $T$ and defined as the identity on $\Z_p^\times$; it is étale, since $T$ is a unit in $A^+$. In particular, whenever we specialize $D$ at a point $x\in X$, we obtain the $(\varphi,\Gamma)$-module attached to the character $G_{\Q_p}^\times\to\Q_p^\times$ obtained from $\delta$ via local class field theory.
		\item $D$ is attached to a character $\chi\colon G_{\Q_p}\to A^\times$ if and only if the image of $T$ under $A^+\to A^+/p$ is constant, i.e. belongs to $\ovl\F_p^\times$. 
  Indeed, $\chi$ has to be an unramified character mapping $\Frob_p$, a Frobenius at $p$, to $T$. In order for $\chi$ to exist, one must be able to define $\Frob_p^{n}$ for every $n\in\wht\Z$, for which it is necessary and sufficient that $T^{p^k}-1$ converges to 0 (in the $p$-adic topology of $A$) if $k\in\Z$ goes to $\infty$. This happens if and only if $T\in A^\circ$ is mapped to an element of $\ovl\F_p$ under $A^+\to A^+/p$. \\ 
  Note that, since $A^+$ is equipped with the $p$-adic topology, the above condition on $T$ is equivalent to $T$ being of the form $u+g$ for some $u\in\Zp^\times$ and $g$ a topologically nilpotent element. 
		As a simple example of a case when $D$ is not attached to a Galois character, one can take $A=\Q_p\langle U,V\rangle/(UV-1)$ and $T=U$, so that $X$ is the annulus of inner and outer radius 1. If one were to take for instance $T=1+pU$, one would indeed be able to convert $D$ into a Galois character over the whole $X$.
		
		\item We can find a \emph{not necessarily admissible} open covering $\fU$ of $X$ such that $D\vert_U$ is attached to a character of $G_{\Q_p}$ for every $U\in\fU$. 
		With the notations of Section \ref{secUV}, if $x$ is a point of $\Spm A$, the residue subdomain $U_{x,\Spf\cA}^{(1)}=(\Spf\cA_x^{(1)})^\rig$ is a wide open, and the topology on $\cA_x^{(1)}$ is the $(p,I_x)$-adic one. For $T$ as above, we can write $T=T(x)+(T-T(x))$, where $T-T(x)\in I_x$ and $T(x)$ is an element of $\ovl\Z_p^\times$; in particular, $T^n$ makes sense in the $(p,I_x)$-adic topology of $\cA_x^{(1)}$ for every $n\in\wht\Z$, so that $D\vert_(U_{x,\Spf\cA}^{(1)})$ is attached to the unramified character of $G_{\Q_p}$ mapping $\Frob_p$ to $T$. \\
        When $x$ varies over $X(\Qp)$, $\Spm A$ is covered by the wide open subdomains $U_{x,\Spf\cA}^{(1)}$, but such a covering is not admissible in general: if it were, one would be able to glue the characters defined over each subdomain to a global character, which we know to be impossible by the above considerations.
        
        \item Kedlaya and Liu generalize the above picture to an overconvergent, étale $(\varphi,\Gamma)$-module $D$ of arbitrary rank by choosing a unit $u\in\Zp^\times$ and working over the locus $X_u\subset X$ on which the matrix attached to $\varphi-u$ in some basis has all of its coefficients of positive $p$-adic valuation (i.e. $\varphi$ reduces to $u$ mod $p$ in $A^+/p$). An argument along the lines of the above shows that $D\vert_{X_0}$ is attached to a $G_{\Q_p}$-representation over $X$. As in the previous example, it is not possible to glue the resulting $G_{\Q_p}$-representations into a sheaf as $u$ varies.
	\end{itemize}
	Note that the above is also an explicit example of a case when the mod $p$ local constancy covering defined in \cite[Definition 2.3]{tortilatt} cannot be taken to be admissible.
\end{ex}





\subsection{Optimal constancy neighborhoods}\label{optimalsec}

In this section we expand on Remark \ref{UVX}, by studying under what conditions the neighborhoods $U_{x,\cX}^{(n)}$ and $V_{x,\cX}^{(n)}$ give the exact locus on which a (pseudo-)representation over an affine formal scheme $\cX$ is constant. 

\begin{notation}
	Let $\rho\colon G\to\GL_d(\cO_\cX(\cX))$ be a continuous representation. We denote by $\cO_L[\rho(G)]$ the closure in $\cO_\cX(\cX)$ of the  $\cO_L$-subalgebra generated by the coefficients of $\rho(g)$, when $g$ varies over $G$.
	
	Let $T\colon G\to\cO_\cX(\cX)$ be a continuous pseudorepresentation. We denote by $\cO_L[T(G)]$ the closure in $\cO_\cX(\cX)$ of the $\cO_L$-subalgebra generated by the elements $T(g)$, when $g$ varies over $G$.
\end{notation}

For the rest of the section, assume that $X$ is a rigid space admitting an affine formal model. 

\begin{prop}\label{optimal}
	Let $n\in\Z_{\ge 1}$.
	\begin{enumerate}[label=(\roman*)]
		\item Let $\bV$ be a $G$-representation over $X$. Assume that $\bV$ admits a model $\rho\colon G\to\GL_d(\cO_\cX(\cX))$, defined over a formal model $\cX$ of $X$, such that $\cO_L[\rho(G)]=\cO_\cX(\cX)$, and let $U$ be an open subspace of $X$ such that $\rho$ is constant mod $\pi_L^n$ over $U$. Then $U$ is contained in $U_{x,\cX_{\ovl\rho}}^{(n)}$.
		\item Let $T\colon G\to\cO_X(X)$ be a continuous pseudorepresentation. Assume that the image of $T$ lies in $\cO_\cX(\cX)$ for a formal model $\cX$ of $X$, and that $\cO_L[T(G)]=\cO_\cX(\cX)$. Let $U$ be an open subspace of $X$ such that $T$ is constant mod $\pi_L^n$ over $U$. Then $U$ is contained in $V_{x,\cX}^{(n)}$.
	\end{enumerate}
\end{prop}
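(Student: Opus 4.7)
Both parts rely on the same density argument: the topological-generation hypothesis lets one upgrade a constancy statement on a generating subset (the $T(g)$'s, respectively the matrix coefficients of $\rho$) to a constancy statement on the whole ring $\cA \coloneqq \cO_\cX(\cX)$. I assume implicitly throughout that $x \in U$, which is the natural way to ensure that the ``constant value'' taken on $U$ by the (pseudo-)representation agrees with its value at $x$; otherwise the conclusion would force an extra compatibility condition between the two.

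For part (ii), fix a finite extension $E/L$ and $y \in U(E)$, giving two continuous $\cO_L$-algebra homomorphisms $\ev_x, \ev_y \colon \cA \to \cO_E$ (with $\ev_x$ factoring through $\cO_L$). Since $\pi_E^{n e_{E/L}} \cO_E = \pi_L^n \cO_E$, the condition $y \in V_{x,\cX}^{(n)}$, that is $\lvert f(y) \rvert \le p^{-n/e}$ for all $f \in I_x$, is equivalent to $\ev_x(f) \equiv \ev_y(f) \pmod{\pi_L^n \cO_E}$ for every $f \in \cA$. Consider the set
\[ S = \{f \in \cA \,\vert\, \ev_x(f) \equiv \ev_y(f) \pmod{\pi_L^n \cO_E}\}. \]
It is an $\cO_L$-subalgebra of $\cA$ because $\ev_x, \ev_y$ are $\cO_L$-algebra homomorphisms, and it is closed for the adic topology of $\cA$ by continuity of both maps. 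Constancy of $T$ on $U$ gives $T(G) \subset S$, so $S$ contains the topological closure of $\cO_L[T(G)]$, which equals $\cA$ by hypothesis. Hence $S = \cA$, and $y \in V_{x,\cX}^{(n)}$.

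Part (i) is proved by exactly the same argument. One interprets ``$\rho$ constant over $U$'' as pointwise constancy mod $p^n$ of the matrix coefficients $\rho(g)_{ij}$, which by Lemma \ref{gammalatt} matches pointwise constancy mod $p^n$ of the associated lattice. By the arithmetic of Remark \ref{gammarem}, the condition $y \in U_{x,\cX}^{(n)}$ is equivalent to $\ev_x(f) \equiv \ev_y(f) \pmod{\pi_E^{\gamma_{E/L}(n)} \cO_E}$ for every $f \in \cA$. One then runs the same closed-$\cO_L$-subalgebra argument using this ideal in place of $\pi_L^n \cO_E$ and the generating family $\{\rho(g)_{ij}\}_{g,i,j}$ in place of $T(G)$, the hypothesis $\cO_L[\rho(G)] = \cA$ ensuring that the resulting subalgebra is all of $\cA$.

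The only nontrivial step is the translation of the two geometric conditions defining $U_{x,\cX}^{(n)}$ and $V_{x,\cX}^{(n)}$ into congruences of evaluation maps modulo the ideals $\pi_E^{\gamma_{E/L}(n)} \cO_E$ and $\pi_L^n \cO_E$ of $\cO_E$; this is precisely the distinction that, via Lemma \ref{gammafun}, places constancy mod $\pi_L^n$ strictly between pointwise constancy mod $p^n$ and mod $p^{n+1}$. Once this dictionary is in place via Remark \ref{gammarem}, both parts reduce to essentially formal topological density arguments.
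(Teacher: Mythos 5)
The proposal is correct and follows essentially the same strategy as the paper's proof, which uses the topological-generation hypothesis $\cO_L[\rho(G)]=\cO_\cX(\cX)$ (resp.\ $\cO_L[T(G)]=\cO_\cX(\cX)$) to upgrade constancy of the (pseudo-)representation to constancy of every element of $\cO_\cX(\cX)$ over $U$, and then invokes the characterization of $U_{x,\cX}^{(n)}$ and $V_{x,\cX}^{(n)}$ as maximal constancy neighborhoods. Your version usefully makes explicit the closed-$\cO_L$-subalgebra density argument that the paper compresses into ``Therefore, the same is true for every element of $\cO_L[\rho(G)]$'', and you are right that the assumption $x\in U$ is implicit in the statement.
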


\begin{proof}
	We prove (i). Since $\rho$ is constant over $U$, up to conjugating $\rho$, for every $g\in G$ all of the matrix coefficients of $\rho(g)$ are constant mod $\pi_L^n$ over $U$. Therefore, the same is true for every element of $\cO_L[\rho(G)]=\cO_\cX(\cX)$. By definition, $U_{x,\cX_{\ovl\rho}}^{(n)}$ is the largest subspace of $X$ over which every element of $\cO_\cX(\cX)$ is constant mod $\pi_L^n$, so the conclusion follows. The proof of (ii) goes exactly in the same way.
\end{proof}

\begin{rem}\label{optimalrem}\mbox{ }
\begin{enumerate}[label=(\roman*)]
\item In the setting of Proposition \ref{optimal}(i) (respectively, (ii)), if a formal model $\cX$ with the property $\cO_L[\rho(G)]=\cO_\cX(\cX)$ (respectively, $\cO_L[T(G)]=\cO_\cX(\cX)$), exists, then it is optimal for the study of the mod $p^n$ variation of $\rho$ (respectively, $T$), since starting our construction with $\cX$ provides us with the largest possible neighborhoods on which $\rho$ is (pointwise) constant mod $p^n$. 
As we discuss below, these equalities are satisfied for the (pseudo-)representations carried by universal (pseudo-)deformation rings (see \Cref{Rgen}).
\item Again in the setting of Proposition \ref{optimal}(i), if a formal model $\cX$ of $X$ such that $\cO_L[\rho(G)]=\cO_\cX(\cX)$ does not exist, one can replace $X$ altogether by the rigid generic fiber $Y$ of $\cY\coloneqq\Spf\cO_L[\rho(G)]$, where $\cO_L[\rho(G)]$ is an adic ring with the subspace topology inherited from $\cO_\cX(\cX)$. The adic map $\cO_L[\rho(G)]\subset\cO_\cX(\cX)$ induces a map of formal schemes $\cX\to\cY$ and of generic fibers $X\to Y$ (note that, if the mod $\pi_L$ reduction of $\rho$ is constant over $\cX$, then $\cY$ will be initial among the affine formal schemes $\cZ$ for which a factorization $\cX\to \cZ\to \Spf\cR_{\ovl\rho}^\square$ exists). One can then study the local constancy of the representation $\rho$ over $Y$ and pull the results back along $X\to Y$. A similar discussion can be made in the setting of Proposition \ref{optimal}(ii).
\item The ring $\cO_L[T(G)]$ is what is called the \emph{trace algebra} of $T$ in \cite[Section 2.1.2]{conlanmedbig}, and the condition $\cO_L[T(G)]=\cO_\cX(\cX)$ is one of the conditions defining the \emph{admissibility} of a pseudorepresentation in \cite[Section 5.2]{bellim} (cf. \cite[Definition 2.8]{conlanmedbig}). 
\item In the setting of Proposition \ref{optimal}(ii), if $\cX$ is affine then by \cite[Lemma 2.3(ii)]{chenlec5}, $\cO_L[T(G)]$ is a semilocal profinite ring, local if $X$ is connected. In particular, the condition $\cO_L[T(G)]=\cO_\cX(\cX)$ can only be satisfied if $\cO_\cX(\cX)$ is profinite, which in turn can only happen if $X$ is a wide open.
\item Let $\cX=\Spf A$ be an affine formal scheme, and $\rho\colon G\to\GL_d(A)$ a continuous representation such that $\cO_L[\rho(G)]=A$. For every surjective $\cO_L$-algebra homomorphism $\pi\colon A\to B$, the representation $\pi\ccirc\rho\colon G\to\GL_d(B)$ (i.e., the pullback of $\rho$ to a $G$-representation on $\Spf B$) satisfies $\cO_L[\pi\ccirc\rho(G)]=\pi(\cO_L[\rho(G)])=B$. In particular, the conclusion of Proposition \ref{optimal}(i) also applies to $\pi\ccirc\rho$. The same remark holds if we replace $\rho$ with a pseudorepresentation. \\
One could, for instance, apply our results to each factor of the semistable deformation rings described by Breuil and Mézard in \cite[Theorem 5.3.1]{bremez}, to obtain optimal subspaces on which the mod $p^n$ reduction of semistable representations is constant (for weight at most $p-1$).
\end{enumerate}
\end{rem}

\begin{ex}
	Let $\rho\colon G_{\Q_p}\to\Z_p[[T]]^\times$ be the unramified character mapping any lift $\Frob_p$ of the Frobenius to $1+T$, and let $\bV$ be the associated $G_{\Q_p}$-representation over the generic fiber $D^\circ(0,1)$ of $\Spf\Z_p[[T]]$, the wide open unit disc. Then $\Z_p[\rho(G_{\Q_p})]=\Z_p[[T]]$, so that the constancy neighborhoods $U_{x,\cX_{\ovl\rho}}^{(n)}$ and $V_{x,\cX_{\ovl\rho}}^{(n)}$ are optimal by Proposition \ref{optimal}(i).
	
	On the other hand, if we consider the unramified character $\wtl\rho\colon G_{\Q_p}\to\Z_p[[T]]^\times$ mapping $\Frob_p$ to $1+pT$, then $\Z_p[\rho(G_{\Q_p})]=\Z_p[[1+pT]]$ is strictly smaller than $\Z_p[[T]]$ (clearly, it does not contain $T$), and one sees immediately that the constancy neighborhoods we defined are not optimal: for instance, $U_{x,\cX_{\ovl\rho}}^{(1)}$ is the affinoid disc of radius $p^{-1}$, but $\wtl\rho$ is constant mod $p$ over the whole $\Spf\Z_p[[T]]$; $V_{x,\cX_{\ovl\rho}}^{(2)}$ is the wide open disc of radius $p^{-1}$, but $\wtl\rho$ is actually pointwise constant mod $p^2$ over the whole $\Spf\Z_p[[T]]$. 
\end{ex}

We show that, for universal (pseudo-)representations, the obvious formal models are optimal in the sense of Remark \ref{optimalrem} above. 
Let $V$ be a $k_L$-vector space of finite dimension $d$ and $\ovl\rho\colon G\to\GL(V)$ a continuous representation. 

As before, we denote by $X_{\ovl\rho}^\sq$ the universal pseudodeformation space for $\ovl\rho$, equipped with its model $\cX_{\ovl\rho}^\sq=\Spf\cR_{\ovl\rho}^\sq$ and with the universal deformation $\rho^{\sq,\univ}$. We write $X^\ps_{\ovl\rho}$ for the universal pseudodeformation space for $\tr\ovl\rho$, equipped with its model $\cX_{\ovl\rho}^\ps=\Spf\cR^\ps_{\ovl\rho}$ and with the universal pseudodeformation $T^\univ$.
Finally, if $\End_{k_L[G]}=k_L$, we denote by $X_{\ovl\rho}$ the universal pseudodeformation space for $\ovl\rho$, equipped with its model $\cX_{\ovl\rho}=\Spf\cR_{\ovl\rho}$ and with the universal deformation $\rho^\univ$.

The following lemma is key to the proof of the constancy statements in Proposition \ref{optimal}.

\begin{lemma}\label{Rgen}\mbox{ }
\begin{enumerate}[label=(\roman*)]
\item Assume that $\End_{k_L[G]}=k_L$. Pick any basis for the finite free $\cR_{\ovl\rho}$ module on which $\rho^\univ$ acts, and let $R$ be the $\cO_L$-subalgebra of $\cR_{\ovl\rho}$ generated by the coefficients of $\rho^\univ(g)$ in this basis, when $g$ varies over $G$. Then $R=\cR_{\ovl\rho}$.
\item Let $R$ be the $\cO_L$-subalgebra of $\cR_{\ovl\rho}$ generated by the coefficients of $\rho^{\sq,\univ}(g)\in\GL_d(R^\sq_{\ovl\rho})$ when $g$ varies over $G$. Then $R=\cR_{\ovl\rho}$. 
\item Let $R$ be the $\cO_L$-subalgebra of $\cR^\ps_{\ovl\rho}$ generated by the elements $T(g)$ when $g$ varies over $G$. Then $R=\cR^\ps_{\ovl\rho}$.
\end{enumerate}
\end{lemma}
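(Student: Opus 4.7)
The plan is to prove all three statements by a single uniform strategy: reduce modulo $\fm^n$ (the maximal ideal of the universal ring $\cR$, standing for $\cR_{\ovl\rho}$, $\cR^\sq_{\ovl\rho}$, or $\cR^\ps_{\ovl\rho}$ respectively) and show, for every $n\ge 1$, that the image of $R$ in $\cR/\fm^n$ is the whole of $\cR/\fm^n$. Once this is known for all $n$, the subring $R$ is dense in $\cR$; being a topologically generated (hence closed) subalgebra of the complete local ring $\cR$, we conclude $R=\cR$.

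For every $n$, write $R_n$ for the image of $R$ under the canonical projection $\cR\twoheadrightarrow\cR/\fm^n$. Since $R_n$ is an $\cO_L$-subalgebra of the finite-length local Artinian ring $\cR/\fm^n$, and $R_n\cap(\fm/\fm^n)$ is a nilpotent ideal with quotient equal to $k_L$ (using that $\cO_L\subseteq R_n$ surjects onto $k_L$), the ring $R_n$ belongs to $\Art_{k_L}$. Moreover, by construction of $R$, the matrix coefficients of $\rho^\univ\bmod\fm^n$ (respectively of $\rho^{\sq,\univ}\bmod\fm^n$, respectively the values $T^\univ(g)\bmod\fm^n$) all lie in $R_n$, so the universal object modulo $\fm^n$ restricts to a genuine (unframed, framed, pseudo-)deformation of $\ovl\rho$ (respectively of $\tr\ovl\rho$) to $R_n$. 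Applying the universal property of $\cR$ to this restriction produces a map $f_n\colon\cR\to R_n$; composing with the inclusion $R_n\hookrightarrow\cR/\fm^n$ yields $g_n\colon\cR\to\cR/\fm^n$ whose associated (pseudo-)deformation is equivalent to the universal one modulo $\fm^n$. The canonical projection $\pi_n$ enjoys exactly the same property, so by the uniqueness clause of the universal property, $g_n=\pi_n$. Since $\pi_n$ is surjective and factors as $\cR\xrightarrow{f_n}R_n\hookrightarrow\cR/\fm^n$, the inclusion $R_n\hookrightarrow\cR/\fm^n$ must itself be surjective, giving $R_n=\cR/\fm^n$.

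The only substantive difference between the three parts, and the main technical point to watch, is the validity of the uniqueness statement invoked above. For (ii), framed deformations carry no equivalence relation, so the pushforward of $(\rho^{\sq,\univ},\iota^{\sq,\univ},b^\univ)$ along a ring map literally equals a given framed deformation for a unique map. For (iii), pseudorepresentations are scalar-valued functions, so the map is again literally unique. In (i), by contrast, deformations are equivalence classes under conjugation by elements of $\GL_d$ reducing to the identity, and the hypothesis $\End_{k_L[G]}(\ovl\rho)=k_L$ is precisely what rules out non-trivial automorphisms of deformations; this ensures genuine uniqueness of $f_n$, and is also the condition needed for $\Def_{\ovl\rho}$ to be pro-representable in the first place. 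With this caveat in place, the argument in the preceding paragraph applies uniformly to all three cases.
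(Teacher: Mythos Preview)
Your proof is correct and follows essentially the same strategy as the paper's: observe that the universal object is already defined over $R$, then use the universal property to produce a map $\cR\to R$ whose composite with the inclusion is the identity (respectively, the canonical projection). The paper applies this directly to $R$ in one line, tacitly treating $R$ as an object of $\wht\Art_{k_L}$, whereas you work modulo $\fm^n$ to remain inside $\Art_{k_L}$; this makes your version slightly longer but also a bit more careful about where the universal property actually applies.
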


\begin{proof}
We prove (i). Clearly, $\rho^{\univ}$ can be defined over $R$, i.e. there exists a continuous representation $\rho_R\colon G\to\GL_d(R)$ such that $\rho^\univ\cong\rho_R\otimes_R\cR_{\ovl\rho}$. Then, the pair $(R,\rho_R)$ is a universal deformation of $\ovl\rho$, hence the inclusion $R\to\cR_{\ovl\rho}$ is an isomorphism by universality. The proof of (ii) and (iii) is exactly the same.
\end{proof}

\begin{prop}\label{optimaluniv}
Let $n\in\Z_{\ge 1}$. 
\begin{enumerate}[label=(\roman*)]
\item Assume that $\End_{k_L[G]}=k_L$. 
If $\rho^\univ$ is pointwise constant mod $p^n$ (respectively, constant mod $\pi_L^n$) over an open subspace $U$ of $X_{\ovl\rho}$, then $U$ is contained in $U_{x,\cX_{\ovl\rho}}^{(n)}$ (respectively, $V_{x,\cX_{\ovl\rho}}^{(n)})$ for some $x\in X_{\ovl\rho}(\Qp)$.
\item If $\rho^{\sq,\univ}$ is pointwise constant mod $p^n$ (respectively, constant mod $\pi_L^n$) over an open subspace $U$ of $X_{\ovl\rho}^\sq$, then $U$ is contained in $U_{x,\cX^\sq_{\ovl\rho}}^{(n)}$ (respectively, $V_{x,\cX^\sq_{\ovl\rho}}^{(n)})$ for some $x\in X^\sq_{\ovl\rho}(\Qp)$.
\item If $T^\univ$ is pointwise constant mod $p^n$ (respectively, constant mod $\pi_L^n$) on an open subspace $U$ of $X^\ps_{\ovl\rho}$, then $U$ is contained in $U_{x,\cX_{\ovl\rho}^\ps}^{(n)}$ (respectively, $V_{x,\cX^\ps_{\ovl\rho}}^{(n)})$ for some $x\in X^\ps_{\ovl\rho}(\Qp)$.
\end{enumerate}
\end{prop}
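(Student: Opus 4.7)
The plan is to reduce each of the three statements directly to Proposition \ref{optimal}, whose generation hypothesis is precisely what Lemma \ref{Rgen} was designed to furnish in the universal setting.

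For part (i), fix any basis of the free $R_{\ovl\rho}$-module underlying $\rho^\univ$, so that $\rho^\univ$ is realized as a continuous matrix representation $G\to\GL_d(R_{\ovl\rho})$. Lemma \ref{Rgen}(i) asserts that the $\cO_L$-subalgebra of $R_{\ovl\rho}=\cO_{\cX_{\ovl\rho}}(\cX_{\ovl\rho})$ topologically generated by the coefficients of $\rho^\univ(g)$, as $g$ ranges over $G$, is all of $R_{\ovl\rho}$. This is exactly the hypothesis $\cO_L[\rho^\univ(G)]=\cO_{\cX_{\ovl\rho}}(\cX_{\ovl\rho})$ required by Proposition \ref{optimal}(i) applied with $\cX=\cX_{\ovl\rho}$. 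Choosing any $x\in U(\Qp)$ (which exists because $U$ is a nonempty admissible open in a rigid analytic space over $L$, hence contains a $\Qp$-rig-point), Proposition \ref{optimal}(i) yields the desired containment $U\subset U_{x,\cX_{\ovl\rho}}^{(n)}$ in the pointwise constant mod $p^n$ case, and $U\subset V_{x,\cX_{\ovl\rho}}^{(n)}$ in the constant mod $\pi_L^n$ case.

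Parts (ii) and (iii) proceed along exactly the same lines, substituting the pair $(R_{\ovl\rho}^\sq,\rho^{\sq,\univ})$ for (ii) and $(R_{\ovl\rho}^\ps,T^\univ)$ for (iii), and invoking Lemma \ref{Rgen}(ii) and Lemma \ref{Rgen}(iii) respectively to secure the generation condition over the corresponding formal model. Case (iii) appeals to the pseudorepresentation statement Proposition \ref{optimal}(ii) rather than to Proposition \ref{optimal}(i).

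There is no substantial obstacle: once Lemma \ref{Rgen} is in hand (the key ingredient, whose short proof extracts the conclusion directly from the universal property of $R_{\ovl\rho}$, $R_{\ovl\rho}^\sq$, and $R_{\ovl\rho}^\ps$), the present statement is an immediate specialization of Proposition \ref{optimal}. The only care-point is the choice of $x$, which must be taken as a $\Qp$-rig-point of $U$ so that the residue neighborhoods $U_{x,-}^{(n)}$ and $V_{x,-}^{(n)}$ are centered inside $U$; this is standard.
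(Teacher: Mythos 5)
Your handling of the \emph{constancy mod $\pi_L^n$} clauses matches the paper's own proof, which for those parts also simply combines Proposition \ref{optimal} with Lemma \ref{Rgen}. The issue lies with the \emph{pointwise constancy mod $p^n$} clauses, where your proof introduces a gap. You read Proposition \ref{optimal}(i) as if it carried a parenthetical \enquote{respectively} structure producing $U\subset U_{x,\cX}^{(n)}$ under pointwise constancy and $U\subset V_{x,\cX}^{(n)}$ under constancy mod $\pi_L^n$. That is not what Proposition \ref{optimal} says: its hypothesis is that $\rho$ (or $T$) is \enquote{constant} over $U$ --- which its proof interprets as constant mod $\pi_L^n$ --- and it states a single conclusion. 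It says nothing about the weaker hypothesis of pointwise constancy mod $p^n$, so the pointwise statements of Proposition \ref{optimaluniv} cannot be obtained as an \enquote{immediate specialization} of Proposition \ref{optimal} as you claim.

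The paper proves those pointwise statements by a different, more intrinsic argument: it observes that $\rho^\univ\otimes_{\cO_L}\cO_L/\pi_L^n$ is universal over $\cO_L/\pi_L^n$-algebras, identifies the common mod $p^n$ reduction of the rig-points of $U$ with a single mod $p^n$ point $x$, and appeals to the rig-point characterization of $U_{x,\cX_{\ovl\rho}}^{(n)}$ in Remark \ref{UVchar}. If you prefer to keep the trace-algebra route uniformly, you must first establish the pointwise analogue of Proposition \ref{optimal}, which is not in the paper: pointwise constancy mod $p^n$ is preserved under $\cO_L$-linear combinations, products (using that the values at the fixed $L$-point $x$ lie in $\cO_L$), and topological limits, so Lemma \ref{Rgen} propagates pointwise constancy of the matrix coefficients of $\rho^\univ$ over $U$ to every element of $\cO_\cX(\cX)$; the inclusion $U\subset U_{x,\cX_{\ovl\rho}}^{(n)}$ then follows from the first part of Remark \ref{UVchar} together with $\gamma_{E/L}(n)\ge n$. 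Either that supplement or a switch to the paper's universal-property argument is needed to close the gap.
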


\begin{proof}
We prove the pointwise constancy part of (i). By the universal property of $\cR_{\ovl\rho}$, the universal deformation of $\ovl\rho$ over $\cO_L/\pi_L^n$-algebras is 
\[ \rho^\univ\otimes_{\cO_L}\cO_L/\pi_L^n\colon G\to\cR_{\ovl\rho}\otimes_{\cO_L}\cO_L/\pi_L^n. \]
Let $U$ be an open subspace of $X_{\ovl\rho}$ such that $\rho^\univ$ is pointwise constant mod $p^n$ over U, and let $x\colon\cR_{\ovl\rho}\otimes_{\cO_L}\cO_L/\pi_L^n\to\cO_L/\pi_L^n$ be the point of $\cR_{\ovl\rho}\otimes_{\cO_L}\cO_L/\pi_L^n$ corresponding to the common mod $p^n$ reduction of all the points in $U$.

Let $E$ is a finite extension of $L$ with valuation ring $\cO_E$ and uniformizer $\pi_E$. For every $y\in U(E)$, let $\wtl y\colon\cR_{\ovl\rho}\to\cO_E$ be the rig-point of $\Spf\cR_{\ovl\rho}$ attached to $y$. Then the composition of $y$ with $\cO\to\cO/\pi_E^n$ factors through $x$. By Remark \ref{UVchar}, $U_{x,\cX_{\ovl\rho}}^{(n)}$ is precisely the locus of points of $X_{\ovl\rho}$ with this property. 

The proofs of (ii) and (iii) in the case of pointwise constancy go exactly as that for (i).

For the constancy statements in all of (i,ii,iii), we simply combine Proposition \ref{optimal} and Lemma \ref{Rgen}.
\end{proof}



\medskip

\section{Arithmetic applications}\label{sec:arith}

We apply Corollary \ref{woloc} to various specific families of representations of arithmetic interest. In doing so, we refine some known results about the local constancy modulo powers of $p$ of families of crystalline or semistable local Galois representations (Proposition \ref{appcr}, Corollary \ref{cryscor}, Remarks \ref{tortirem} and \ref{rember}), and deduce some new ones both for local and for global modular representations (Corollary \ref{crysglob}, Proposition \ref{appst}, Theorem \ref{bll}, Corollary \ref{stglob}, Proposition \ref{ecurvelc}, Corollary \ref{corhida}).

We assume throughout this section that $p>2$: since we are dealing with sheaves of representations of rank $d=2$, the assumption $d<p$ of Corollary \ref{woloc} is satisfied.


\subsection{Sheaves of trianguline $G_{\Q_p}$-representations}\label{shtri}
	
	
	
	Following \cite[Section 2.2]{brehelsch}, we recall two constructions of a deformation space for trianguline representations of $G_{\Q_p}$, a ``functorial'' and a ``geometric'' one. We will use the interplay between the two constructions in order to produce a sheaf of trianguline representations of $G_{\Q_p}$. Note that all of these section can be rewritten, with very little change, with $G_{\Q_p}$ replaced by $G_K$, $K$ a $p$-adic field. However, we confine ourselves to the case $K=\Q_p$ in our applications.
	
	Let $\Aff_{\mathbb{Q}_p}$ be the category of affinoid algebras over $\mathbb{Q}_p$. For every $A\in\Aff_{\Q_p}$, we denote by $\mathcal{R}_A$ the Robba ring with coefficients in $A$, defined as in \cite[Notation 2.1.1]{kedpotxia}.

	Recall that the functor $\Aff_{\Q_p}\to\Sets, \,
		B\mapsto\Hom_\cont(\mathbb{Q}_p^\times,B^{\times})$ 
	is pro-represented by a $\Q_p$-rigid analytic space $\cT$. Similarly, the functor $\Aff_{\Q_p}\to\Sets,\,B\mapsto\Hom_\cont(\mathbb{Z}_p^\times,B^{\times})$
	is pro-represented by a $\Q_p$-rigid analytic space $\cW$, which is isomorphic to a disjoint union of $p-1$ wide open rigid analytic 1-discs. 
	From the isomorphism $\Q_p^\times\cong\Z_p^\times\times p^\Z$, we deduce an isomorphism 
	$\mathcal{T}\cong\mathcal{W}\times \mathbb{G}_m$ 
	of $\mathbb{Q}_p$-rigid analytic spaces. 
	We introduce notation for a few special continuous characters $\Q_p^\times\to\Q_p^\times$:
	\begin{itemize}
		\item $x$ is the identity character, 
		\item $\chi$ is the character attached by local class field theory to the cyclotomic character of $G_{\Q_p}$, that satisfies $\chi(p)=1$ and is the identity on $\mathbb{Z}_p^\times$;
		\item $\lvert x\rvert$ is the character mapping $y\in\Q_p^\times$ to $p^{- v_p (y)}$.
	\end{itemize}
	Clearly, $\chi=x\lvert x\rvert$. We will often abuse of notation and write $x$ for both the identity character and an element of $\Q_p$, so that $\lvert x\rvert=p^{-v_p(x)}$.
	
	Inside of the rigid analytic space $\mathcal{T}$, we identify an admissible open $\mathcal{T}^{\reg}$ whose points are \emph{regular} in the sense of Colmez and Chenevier (see for example \cite[Section 2.27]{chendens}). 
	For any $B\in\Aff_{\mathbb{Q}_p}$, we say that a character $\delta\colon\mathbb{Q}_p^{\times} \rightarrow B^{\times}$, seen as a point in $\mathcal{T}(B)$, is regular if, for every $z\in\Spm B$ with residue field a finite extension $K(z)/\Q_p$, the specialization $\delta_z\colon\mathbb{Q}_p^\times \rightarrow K(z)^\times$ of $\delta$ at $z$ is not of the form $x^i$ or $\chi x^{-i}$ for any non-negative integer $i$.
	
	When studying trianguline representations of any dimension $d\geq 1$, we will work over the rigid analytic space of parameters $\mathcal{T}^d$, the product of $d$ copies of $\mathcal{T}$. We denote by $\mathcal{T}_d^\reg$ the admissible open of $\mathcal{T}^d$ defined by the following condition: if $B\in \Aff_{\mathbb{Q}_p}$,
	\[ \mathcal{T}_d^\reg (B) \coloneqq\{(\delta_i)_i \in \mathcal{T}^d (B) \colon \delta_i / \delta_j \in \mathcal{T}(A)^\reg \text{ for all }1\leq i <j \leq d\}. \] 
	
	
	We recall a crucial definition. 
	Let $A\in \Aff_{\mathbb{Q}_p}$ and $d\in\Z_{\ge 1}$. We refer to \cite[Definition 2.2.12]{kedpotxia} for the definition of a $(\varphi,\Gamma)$-module over $\cR_A$.
	Recall from \cite[Theorem 6.2.14]{kedpotxia} that every $(\varphi,\Gamma)$-module of rank 1 over $\cR_A$ is isomorphic to $\cR_A(\delta)$ for some continuous character $\delta\colon\Q_p^\times\to A^\times$. 
	
	\begin{defin}
		A \emph{trianguline} $(\varphi, \Gamma)$-module of rank $d$ over $\cR_A$ is a pair $(D,\Fil^\bullet D)$ consisting of a $(\varphi, \Gamma)$-module over $\cR_A$ and an increasing, exhaustive, separated filtration $(\Fil^\bullet D)_{i=1,\ldots,d}$ such that, for every $i\in\{0,\ldots,d-1\}$, $\Fil^{i+1}/\Fil^i$ is a $(\varphi,\Gamma)$-module of rank 1 over $\cR_A$, hence isomorphic to $\cR_A(\delta_i)$ for a continuous character $\delta_i\colon\Q_p^\times\to A^\times$. We call the characters $(\delta_i)_{i=0,\ldots,d-1}$ the \emph{parameters} of $(D,\Fil^\bullet D)$. \\
		A \emph{regular, trianguline, rigidified} $(\varphi, \Gamma)$-module of rank $d$ over $\mathcal{R}_A$ is a triple $(D, \Fil^\bullet D, \nu)$ where:
		\begin{enumerate}[label=(\arabic*)]
			\item $(D, \Fil^\bullet D)$ is a trianguline $(\varphi,\Gamma)$-module of rank $d$ of whose parameters $(\delta_i)_{i=1,\ldots,d}$ define a point of $\mathcal{T}_d^{\reg} (A)$;
			\item $\nu=(\nu_i)_{i=0,\ldots,d-1}$ is a family of isomorphisms $\nu_i\colon \Fil^{i+1} (D) / \Fil^{i} (D) \rightarrow \mathcal{R}_A (\delta_i)$ of $(\varphi, \Gamma)$-modules. 
		\end{enumerate}
		We also refer to the datum $\nu$ as a \emph{rigidification} of the regular trianguline $(\varphi,\Gamma)$-module $(D,\Fil^\bullet D)$. \\
		We say that two regular, trianguline, rigidified $(\varphi,\Gamma)$-modules $D_1$ and $D_2$ are isomorphic if there is an isomorphism of $(\varphi,\Gamma)$-modules $D_1\to D_2$ compatible with the filtrations and commuting with the rigidifications.
	\end{defin}
	
	
\noindent	We introduce a functor $F_d\colon\Aff_{\mathbb{Q}_p} \rightarrow \Sets$ by defining $F_d(A)$ as the set of trianguline, regular and rigidified $(\varphi, \Gamma)$-modules of rank $d$ up to isomorphism, for every $A\in\Aff_{\mathbb{Q}_p}$. We recall the following result:
	
	\begin{thm}[{\cite[Theorem B]{chendens}}]
		The functor $F_d$ is representable by a rigid analytic space $S_d$ over $\mathbb{Q}_p$ which is smooth and irreducible of dimension $\frac{d(d+3)}{2}$. 
	\end{thm}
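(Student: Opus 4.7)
My approach is induction on the rank $d$. For the base case $d=1$, every rank-one $(\varphi,\Gamma)$-module over $\mathcal{R}_A$ is of the form $\mathcal{R}_A(\delta)$ for a unique continuous character $\delta\colon \Q_p^\times\to A^\times$ (by the classification of rank-one $(\varphi,\Gamma)$-modules from \cite{kedpotxia} already recalled in the paper), and the rigidification datum is automatic once $\nu_0$ is chosen; hence $F_1$ is representable by $\mathcal{T}$, which is smooth of dimension $2$ with each connected component irreducible.

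For the inductive step, suppose $S_{d-1}$ has been constructed, carrying a universal regular rigidified trianguline $(\varphi,\Gamma)$-module $D_{d-1}^{\univ}$ with parameters $(\delta_0,\ldots,\delta_{d-2})$. I would form $Y\coloneqq S_{d-1}\times\mathcal{T}$, which parameterises a rank-$(d-1)$ rigidified trianguline module together with an extra character $\delta_{d-1}$, and restrict to the open subspace $Y^{\reg}\subset Y$ cut out by the conditions $\delta_i/\delta_{d-1}\in\mathcal{T}^{\reg}$ for every $i<d-1$. Over $Y^{\reg}$ I consider the family $\mathcal{E}\coloneqq D_{d-1}^{\univ}\otimes \mathcal{R}(\delta_{d-1}^{-1})$: rigidified extensions of $\mathcal{R}(\delta_{d-1})$ by $D_{d-1}^{\univ}$ are classified fibrewise by $H^1(\mathcal{E})$. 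A dévissage along the Jordan--Hölder filtration of $\mathcal{E}$, combined with the regularity hypothesis and Tate duality for $(\varphi,\Gamma)$-modules, gives $H^0(\mathcal{E})=H^2(\mathcal{E})=0$; the Euler--Poincaré formula then yields $\dim_{\kappa(z)}H^1(\mathcal{E}_z)=d-1$ at every point $z\in Y^{\reg}$. The family-cohomology machinery of \cite{kedpotxia} (cohomology of $(\varphi,\Gamma)$-modules over affinoid bases as perfect complexes) then implies that $H^1(\mathcal{E})$ is a locally free $\mathcal{O}_{Y^{\reg}}$-module of rank $d-1$ whose formation commutes with base change. I would define $S_d$ to be the geometric vector bundle associated to this locally free sheaf on $Y^{\reg}$, equipped with its tautological universal extension as universal object.

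This construction makes $S_d\to Y^{\reg}$ a vector bundle of rank $d-1$ over a smooth base (inductively $S_{d-1}$ is smooth and $\mathcal{T}$ is smooth, and $Y^{\reg}$ is open in their product), so $S_d$ is itself smooth; each connected component is irreducible by induction; and the dimension telescopes as $\dim S_d=\dim S_{d-1}+\dim\mathcal{T}+(d-1)=\dim S_{d-1}+d+1$, which starting from $\dim S_1=2$ gives the claimed $\tfrac{d(d+3)}{2}$. The principal obstacle is the family-cohomology input: one must set up $H^i$ of $(\varphi,\Gamma)$-modules in families as a perfect complex compatible with base change, together with a relative Tate duality ensuring that regularity forces $H^0=H^2=0$ and hence local freeness of $H^1$ of the expected rank. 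Once that machinery is granted, representability reduces to the statement that rigidified extensions correspond to honest cohomology classes, with no residual gerbe — which is itself controlled by the vanishing $H^0(\mathcal{E})=0$ killing automorphisms, so the whole argument closes up on the regular locus.
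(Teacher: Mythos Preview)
The paper does not prove this statement: it is quoted verbatim as \cite[Theorem B]{chendens} and used as a black box, so there is no ``paper's own proof'' to compare against. Your sketch is broadly the strategy of Chenevier's original argument in \cite{chendens}: induct on $d$, realise $S_d$ as the total space of the vector bundle over (the regular locus of) $S_{d-1}\times\mathcal T$ whose fibre is the relevant $H^1$, and use the vanishing of $H^0$ and $H^2$ on the regular locus together with Euler--Poincar\'e to get local freeness of rank $d-1$.

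One point in your write-up does not close: you only argue that each connected component of $S_d$ is irreducible, whereas the statement asserts that $S_d$ itself is irreducible. Your inductive construction exhibits $S_d$ as a vector bundle over an open in $S_{d-1}\times\mathcal T$, and $\mathcal T\cong\mathcal W\times\mathbb G_m$ is visibly disconnected (the weight space $\mathcal W$ has $p-1$ components), so the induction as written produces a disconnected space. Chenevier's irreducibility claim requires a further argument (or a convention on $\mathcal T$) that your sketch does not supply; you should either track down how \cite{chendens} handles this, or weaken your conclusion to ``smooth and equidimensional with irreducible connected components'', which is what your induction actually gives.
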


In \cite{chendens}, as well as in \cite[Theorem 2.4]{helsch}, such a functor is denoted $F_d^\square$, and the representing space $S_d^\square$, to emphasize the choice of a rigidification. We stick instead to the notation of \cite{brehelsch}, where, as we recall below, the notation ${}^\sq$ is reserved to a covering of a subspace of $S_d^\sq$ trivializing a vector bundle of $G_{\Q_p}$-representations. 

	As part of the universality statement, $S_d$ carries a universal trianguline, regular and rigidified $(\varphi,\Gamma)$-module $D_d^\univ$. For every point $x$ of $S_d^\square$ of residue field $K(x)$, we denote by $D_x$ the $(\varphi,\Gamma)$-module over $\cR_{K(x)}$ obtained by specializing $D_d^\univ$ at $x$. For every affinoid subdomain $U=\Spm A$ of $S_d^\univ$, we denote by $D_U$ the $(\varphi,\Gamma)$-module over $\cR_A$ obtained by pulling back $D_d^\univ$ along the embedding $U\into S_d^\square$.
	

Consider a continuous representation $\ovl\rho\colon G_{\Q_p}\to\GL_n(k_L)$, and implicitly base change the space $S_d^\sq$ to $L$. 
Following the proof of \cite[Theorem 2.6]{brehelsch}, we introduce a functor $F_{d,\ovl\rho}^\square\colon\Aff_{\mathbb{Q}_p} \rightarrow \Sets$ associating with $A$ the set of triples $(\rho, \Fil^\bullet D_\rig(\rho), \nu)$ where $\rho\colon G_{\Q_p}\to\GL_d(A)$ is a continuous representation, $D_\rig(\rho)$ is the $(\varphi, \Gamma)$-module over $\mathcal{R}_A$ defined by Berger and Colmez in \cite[Théorème A]{bellchen}, $\Fil^\bullet$ is a triangulation of $D_\rig(\rho)$, and $\nu$ a rigidification of it, such that the parameter of $\Fil^\bullet$ is a triangulation of $D_\rig(\rho)$ belongs to $\cT^\reg(A)$. By the proof of \cite[Theorem 2.6]{brehelsch}, 
$F_{d,\ovl\rho}^\square$ is representable by a rigid analytic space $S_{d,\ovl\rho}^\square$ (denoted by $S_d^\sq(\ovl r)$ in \emph{loc. cit.}).

We sketch the construction of $S_{d,\ovl\rho}^\square$, following the proof of \cite[Theorem 2.6]{brehelsch}. Let $S_d^{\sq,0}$ be the locus in the trianguline variety $S_d$ where $D_d^\univ$ is étale (it is in general a na\"ive open, i.e. a not necessarily admissible union of open subspaces of $S_d^\sq$), and let $S_d^\adm$ be the maximal subspace of $S_d^{\sq,0}$ with the property that $D_d^\univ\vert_{S_d^\adm}$ is attached to a sheaf of $G_{\Q_p}$-representation $\bV$ via the construction of Berger and Colmez; it is well defined as a rigid subspace of $S_d^\sq$ by \cite[Theorem 1.2]{hellfam}. We let $S_d^{\sq,\adm}$ be the space of trivializations of the vector bundle $\bV$, that naturally comes equipped with a projection $\pi\colon S_d^{\sq,\adm}\to S_d^\adm$, and with a $G_{\Q_p}$-representation $\pi^\ast\bV$ (not just a sheaf, since we trivialized $\bV$). Since $G_{\Q_p}$ is topologically finitely generated, there exists an admissible subspace of $S_d^{\sq,\adm}$ over which $\pi^\ast\bV$ admits a lattice $\cV$. 
Then $S_{d,\ovl\rho}^\square$ is defined as the subspace of $S_d^{\sq,\adm}$ where the reduction of $\cV$ is $\ovl\rho$. Observe that the above construction equips $S_{d,\ovl\rho}^\square$ with a map $\psi_{\ovl\rho}\colon S_{d,\ovl\rho}^\square\to S_d$, that is not an embedding. 

As in \cite[Definition 2.4]{brehelsch}, we define $X^\square_{\ovl\rho,\tri}$ (noted $X^\square_\tri(\ovl\rho)$ in \emph{loc. cit.}) as the Zariski-closure of the subset
\[ U_{\ovl\rho,\tri}^\square\subset X_{\ovl\rho}^\square\times\cT_L^n \]
consisting of pairs $(\rho,(\delta_i)_i)$ of a lift $\rho\colon G_{\Q_p}\to\GL_n(L)$ of $\ovl\rho$ and the parameter $(\delta_i)_i$ of a triangulation of $D_\rig(\rho)$. We equip $X^\square_{\ovl\rho,\tri}$ with its reduced structure of closed analytic subspace of $X_{\ovl\rho}^\square\times\cT_L^n$. We denote by $X^{\square,\reg}_{\ovl\rho,\tri}$ the preimage of $\cT_{L,d}^\reg$ under the natural morphism $\omega_2\colon X^\square_{\ovl\rho,\tri}\to\cT_L^n$ mapping a pair $(\rho,(\delta_i)_i)$ to its parameter.

The rigid analytic spaces we introduced fit in a commutative diagram
\begin{equation}
\begin{tikzcd}[column sep=4em]
S_{d,\ovl\rho}^\square\arrow[dr,"\pi_{\ovl\rho}",swap]\arrow{drr}{\kappa}\arrow{d}\arrow{rr}{\psi_{\ovl\rho}} & & S_d\\
X_{\ovl\rho}^\square & X^\square_{\ovl\rho,\tri}\arrow{l}{\omega_1}\arrow[r,"\omega_2",swap] & \cT_L^n
\end{tikzcd}
\end{equation}
where:
\begin{itemize}
\item $\omega_1,\omega_2$ are the compositions of the embedding $X_{\ovl\rho,\tri}^\square\subset X_{\ovl\rho}^\square\times\cT_L^n$ with the projections to the two factors, 
\item $\pi_{\ovl\rho}$ is obtained by factoring through $X_{\ovl\rho,\tri}^\square$ the morphism $S_{d,\ovl\rho}^\square\to X_{\ovl\rho}^\square\times\cT_L^n$ mapping a triple $(\rho,\Fil^\bullet D_\rig(\rho), \nu)$ to the pair consisting of $\rho$ and the parameter of $\Fil^\bullet D_\rig(\rho)$,
\item the map $\psi_{\ovl\rho}$ is the one defined above. 
\end{itemize}

\begin{rem}
Chenevier \cite[Corollaire 3.18 and the discussion thereafter]{chendens} defines a na\"ive open $S_d(\ovl\rho)$ of $S_d$ (in his notation, $S_d^\sq$), equipped with a map $S_d(\ovl\rho)\to\fX_d(\ovl\rho)$ to the $\ovl\rho$-component of a character space for $G_{\Q_p}$. However, this character space is in general only equipped with a universal determinant, or pseudorepresentation. We prefer to work with the space defined in \cite{brehelsch}, that is equipped with a map to the framed deformation space of $\ovl\rho$, since it allows us to pullback a $G_{\Q_p}$-representation and a lattice from the deformation space.
\end{rem}

Our interest in the construction lies in the next lemma. Let $U$ be a strictly quasi-Stein subspace of $S_d$, contained in the étale locus $S_d^{0}$, and such that $\cO^+_U(U)$ is a UFD. Let $\cU=\Spf\cO^+_U(U)$.

\begin{lemma}\label{existsGrep}
Assume that 
\begin{itemize}
\item either $\ovl\rho$ is irreducible, or it is multiplicity-free and $\cO^+_U(U)$ is a UFD;
\item for every $x\in U(L)$, the representation of $G_{\Q_p}$ attached to $x$ is a lift of $\ovl\rho$.
\end{itemize}
Then the space $U$ carries a sheaf of $G_{\Q_p}$-representations $\bV$, and, for every $n\ge 1$ and $x\in U(L)$, $\bV$ is pointwise constant mod $p^n$ over $U_{x,\cU}^{(n)}$. If $\ovl\rho$ is multiplicity-free, then $\bV$ is a $G_{\Q_p}$-representation that is constant mod $\pi_L^n$ over $V_{x,\cU}^{(n)}$, for every $n\ge 1$. 
\end{lemma}


\begin{proof}
Since the residual representation is isomorphic to $\ovl\rho$ at every $L$-point of $S_d^\square$, $U$ is contained in the image of the map $\psi_{\ovl\rho}$ described above. 
Since $U$ is the tube of the unique point in its special fiber, it is contained in the admissible locus $S_d^{\adm}$ by \cite[Theorem 1.2(iii)]{hellfam}, so that $D_d^\univ\vert_U$ is attached to a sheaf of $G_{\Q_p}$-representations $\bV_U$. Now the conclusion follows from Corollary \ref{woloc}.
%
\end{proof}



\subsection{Crystalline representations of dimension 2}\label{seccris}

In this section, we apply Theorem \ref{Unconst} to the study of $G_{\mathbb{Q}_p}$-stable lattices in crystalline representations of dimension 2. We start by recalling how these representations are classified, up to twist, by their weight and the trace of their crystalline Frobenius, and how to see them inside of the universal trianguline deformation space introduced in Section \ref{shtri}.

As usual, let $L$ be a $p$-adic field, with valuation ring $\cO_L$ having maximal ideal $\fm_L$ and residue field $k_L$. For an integer $k\ge 2$ and an element $a_p\in\fm_L$, let $V_{k,a_p}$ be the dual of the crystalline, $L$-linear representation whose associated filtered $\varphi$-module $D_{k,a_p}$ is a 2-dimensional $L$-vector space equipped with the structures defined, in a basis $(e_1,e_2)$, by
\[ \varphi=\begin{pmatrix}0 & -1 \\ p^{k-1} & a_p\end{pmatrix},\quad \Fil^iD_{k,a_p}=\begin{cases}D_{k,a_p}\text{ if }i\le 0 \\ Le_1\text{ if }1\le i\le k-1 \\ 0\text{ if }i\ge k\end{cases}. \]
Observe that the two eigenvalues of the crystalline Frobenius are the roots of the polynomial $T^2-a_pT+p^{k-1}$. 

By \cite[Proposition 3.1]{brequel}, every 2-dimensional, crystalline $L$-linear representation of $G_{\Q_p}$ is isomorphic to $V_{k,a_p}$ for some $k,a_p$, up to twist with a crystalline character. 
Following \cite[Proposition 2.4.1]{bellchen}, we recall that $V_{k,a_p}$ admits a \emph{non-critical} triangulation (i.e. one whose associated Frobenius filtration is in general position with respect to the Hodge filtration). The parameters $\delta_1,\delta_2\colon\Q_p^\times\to L^\times$ of one such non-critical triangulation are given as follows: if $(\varphi_1,\varphi_2)$ is the ordering of eigenvalues (``refinement'') of the crystalline Frobenius $\varphi$ attached to the triangulation, and $(k_1,k_2)$ the ordering of the Hodge--Tate weights of $V_{k,a_p}$ determined by the corresponding filtration of $D_\crys(V_{k,a_p})$, then for $i=1,2$ $\delta_i$ is the unique character satisfying 
\begin{itemize}
	\item $\delta_i (\gamma)=\gamma^{-k_i}$ for all $\gamma \in \mathbb{Z}_p^\times$, and
	\item $\delta_i (p)=\varphi_i p^{-k_i}$.
\end{itemize}

If $\varphi_1$ is the root of $T^2-a_pT+p^{k-1}$ of smaller $p$-adic valuation, then the ordering $(\varphi_1,\varphi_2)$ always corresponds to a non-critical triangulation, with $k_1=0,k_2=k-1$ (see \cite[Remark 2.4.6]{bellchen}). 
With this choice, characters $\delta_1,\delta_2$ as above and Colmez's notation for trianguline representations \cite[Section 0.3]{colmez}, we obtain an isomorphism $V_{k,a_p}\cong V(\delta_1,\delta_2)$. Now let $D^\circ(0,1)$ be the wide open unit disc over $\Q_p$. The $\Q_p$-points of $D^\circ(0,1)$ are in bijection with $G_{\Q_p}$-orbits of elements of $\Qp$ of positive valuation. Since $\delta_1$ and $\delta_2$ obviously vary in a $p$-adically analytic way as $a_p$ varies over $D^\circ(0,1)$, mapping $a_p\in D(0,1)$ to the point $V_{k,a_p}\cong V(\delta_1,\delta_2)$ of the trianguline variety $S_2^\sq$ gives an embedding $D^\circ(0,1)\into S_2^\sq$ compatible with the $(\varphi,\Gamma)$-modules at every specialization on each side. 



Because of Corollary \ref{pstorep}, the representations $V_{k,a_p}$ are not specializations of a sheaf of $G_{\Q_p}$-representations on the whole $D^\circ(0,1)$. However, if we restrict ourselves to a small enough subdomain of $D^\circ(0,1)$, we can interpolate them with a $G_{\Q_p}$-representation, as follows. 

Let $\ovl\rho\colon G_{\Q_p}\to\GL_2(k_L)$ be a continuous, multiplicity-free representation. Let $U$ be a strictly quasi-Stein subspace of $D^\circ(0,1)$, and let $\cU$ be its formal model $\Spf\cO_U^+(U)$. Assume that $V_{k,a_p}$ is a lift of $\ovl\rho$ for every $a_p\in U(L)$, and if $\ovl\rho$ is not absolutely irreducible, assume further that $\cO_U^+(U)$ is a UFD (for instance, take $U$ to be a wide open subdisc of $D^\circ(0,1)$). Thanks to the above discussion, we can embed $D^\circ(0,1)$, hence $U$, in $S_2^\sq$. Then Lemma \ref{existsGrep} implies the following.

%


\begin{prop}\label{appcr}
	The rigid analytic space $U$ is equipped with a $G_{\Q_p}$-representation $\bV$ that specializes to $V_{k,a_p}$ at any $a_p\in U(L)$. Moreover, for every $a_{p,0}\in U(L)$ and $n\in\Z_{\ge 1}$, the $G_{\Q_p}$-representation $\bV$ is pointwise constant mod $p^n$ over $U_{a_{p,0},\cU}^{(n)}$. 
	\begin{enumerate}[label=(\roman*)]
		\item The $G_{\Q_p}$-representation $\bV$ is pointwise constant mod $p^n$ over $U_{a_{p,0},\cU}^{(n)}$; in particular, for every finite extension $E$ of $L$ and $a_p\in U(E)$, there exist lattices $\cV_{k,a_{p,0}}$ in $V_{k,a_{p,0}}$, $\cV_{k,a_p}$ in $V_{k,a_p}$, and an isomorphism \[ \cV_{k,a_{p,0}}\otimes_{\cO_E}\cO_E/\pi_E^n\cong\cV_{k,a_{p}}\otimes_{\cO_E}\cO_E/\pi_E^n \] of $\cO_E/\pi_E^n[G]$-modules. 
		\item The $G_{\Q_p}$-representation $\bV$ is constant modulo $\pi_L^n$ over $V_{a_{p,0},\cU}^{(n)}$.
	\end{enumerate}
\end{prop}

Recall that if $U$ is a wide open disc of radius $p^m$ and $a_{p,0}\in U(L)$, then $U_{a_{p,0},\cU}^{(n)}$ is a wide open disc of center $a_{p,0}$ and radius $p^{m-n}$, and $V_{a_{p,0},\cU}^{(n)}$ is an affinoid disc of center $a_{p,0}$ and radius $p^{m-n}$. For instance, if we know that the isomorphism class of $V_{k,a_p}$ is constant modulo $\pi_L$ for every $a_p$ with $\lvert a_p-a_{p,0}\rvert<p^m$, we deduce that it is constant modulo $\pi_L^n$ for every $a_p$ with $\lvert a_p-a_{p,0}\rvert<p^{m+1-n}$. 

Proposition \ref{appcr} allows us to recover mod $\pi_L^n$ congruences among crystalline representations from mod $\pi_L$ congruences. In particular, it allows one to slightly improve Torti's result on higher congruences \cite[Theorem 1.1]{tortired} starting from the mod $\pi_L$ results of Berger--Li--Zhu, Berger, and Torti himself, as in the following corollary.

\begin{cor}\label{cryscor}
Let $k\ge 2$ be an integer, and $a_{p,0}\in\fm_L\setminus\{0\}$. Then one can take the domain $U$ in Proposition \ref{appcr} to be the wide open disc of center $a_{p,0}$ and radius $2v_p(a_{p,0})+\alpha(k-1)$, where $\alpha(k-1)=\sum_{s\ge 1}\lfloor\frac{s}{p^{n}(p-1)}\rfloor$. 
In particular: 
\begin{enumerate}[label=(\roman*)]
\item Proposition \ref{appcr}(i) holds for every $a_p\in\fm_E$ satisfying
\[ v_p(a_p-a_{p,0})>2v_p(a_{p,0})+\alpha(k-1)+n-1; \]
\item Proposition \ref{appcr}(ii) holds over the affinoid disc of center $a_{p,0}$ and valuation radius 
\[ 2v_p(a_{p,0})+\alpha(k-1)+n. \]
\end{enumerate}
\end{cor}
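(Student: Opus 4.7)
The plan is to combine the explicit mod $p$ local constancy for the family $\{V_{k,a_p}\}$ established by Berger--Li--Zhu \cite{berlizhu}, Berger \cite{berloccon} and Torti \cite{tortired}, with Proposition \ref{appcr} and a direct computation of the residue neighborhoods $U_{x,\cX}^{(n)}$ and $V_{x,\cX}^{(n)}$ on a one-dimensional wide open disc.

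First I would invoke the mod $p$ statement from the works cited above: setting $r_0 := 2v_p(a_{p,0}) + \alpha(k-1)$, the semisimplification of the residual representation $\ovl V_{k,a_p}$ is independent of $a_p$ as $a_p$ varies over the wide open disc $U$ centered at $a_{p,0}$ of valuation radius $r_0$. Call this common residual representation $\ovl\rho$; under the standing multiplicity-freeness hypothesis of Proposition \ref{appcr}, this $U$ is an admissible choice of domain in that proposition, and we obtain a sheaf (in fact a $G_{\Q_p}$-representation, after possibly shrinking or covering by local trivializations) $\bV$ on $U$ specializing to $V_{k,a_p}$ at every $a_p \in U(L)$, and more generally at every $E$-rig-point for any finite extension $E/L$, by the functoriality of the trianguline embedding $D^\circ(0,1) \into S_2^\sq$ of Section \ref{seccris}.

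Next I would compute explicitly $U_{a_{p,0},\cU}^{(n)}$ and $V_{a_{p,0},\cU}^{(n)}$ for $\cU := \Spf \cO_U^+(U)$. Since $U$ is (after a linear change of coordinate recentering at $a_{p,0}$ and rescaling by $\pi_L^{e r_0}$) a wide open 1-disc isomorphic to the generic fiber of $\Spf \cO_L[[T]]$, the ideal $I_{a_{p,0}}$ of Section \ref{secUV} is generated by $T$. Example \ref{exdisc} applied to this data identifies $U_{a_{p,0},\cU}^{(n)}$ as the wide open disc of center $a_{p,0}$ and valuation radius $r_0 + n - 1$, and $V_{a_{p,0},\cU}^{(n)}$ as the affinoid disc of the same center with valuation radius $r_0 + n$. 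Substituting $r_0 = 2v_p(a_{p,0}) + \alpha(k-1)$ and invoking parts (i) and (ii) of Proposition \ref{appcr} on these subdomains yields the two statements of the corollary.

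The only delicate point to verify — rather than a genuine obstacle — is the numerical conversion between ``valuation radius'' in the sense of $v_p$ (used in the statement) and in the sense of $\pi_L$ (used in Example \ref{exdisc}): the two differ by the ramification index $e$ of $L/\Q_p$, so strictly speaking the formulas $r_0 + n - 1$ and $r_0 + n$ are exact when $L$ is unramified over $\Q_p$. One handles this either by enlarging the coefficient field of $\ovl\rho$ to an unramified extension (which is harmless for the conclusion since the crystalline representations $V_{k,a_p}$ and their reductions are intrinsically defined over the base field $\Q_p$) or by inflating $r_0$ slightly to land in the value group of $L$. The main geometric input, the existence of the sheaf $\bV$ on the full mod $p$ constancy disc, is already provided by Proposition \ref{appcr}; everything else is bookkeeping with the explicit formulas from Section \ref{secUV}.
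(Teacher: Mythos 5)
Your proposal is correct and follows essentially the same route as the paper's own proof: one invokes the explicit mod~$p$ local constancy results of Berger--Li--Zhu, Berger and Torti to verify the hypothesis of Proposition \ref{appcr} over the wide open disc of valuation radius $2v_p(a_{p,0})+\alpha(k-1)$, and then reads off parts (i) and (ii) from that proposition together with the explicit description of the residue subdomains of a wide open disc (Example \ref{exdisc} and the remark after Proposition \ref{appcr}). Your third paragraph, flagging the discrepancy between the $v_p$-normalization in the statement and the $\pi_L$-normalization in Definition \ref{resdef} (a factor of $e$), is a genuine subtlety that the paper's one-line proof passes over; since $(n-1)/e \le n-1$ the formulas as printed are not wrong but only suboptimal when $L/\Q_p$ is ramified, and your suggested fixes (working over an unramified extension, or enlarging $r_0$) are reasonable ways to make the bookkeeping exact.
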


\begin{proof}
Let $U$ be the wide open disc defined in the statement. By \cite[Theorem A]{berloccon} in the residually irreducible case (and \cite[Theorem 1.1.1]{berlizhu} for the case $a_{p,0}=0$), and the $m=1$ case of \cite[Theorem 1.1]{tortired} in the general case, we can find a lattice $\cV_{k,a_{p,0}}$ in $V_{k,a_{p,0}}$ such that $V_{k,a_p}$ is a lift of the mod $\pi_L$ reduction $\cV_{k,a_{p,0}}^{(1)}$ for every $a_p\in U(L)$. Therefore, we can apply Proposition \ref{appcr} to $U$, and we obtain the corollary. 
\end{proof}

\begin{rem}\label{tortirem}
	Torti \cite[Theorem 1.1]{tortired} proves a mod $p^n$ pointwise constancy result over the affinoid disc of Corollary \ref{cryscor}(ii). We improve his statement to a mod $p^n$ pointwise constancy result over the larger wide open disc defined by the inequality in Corollary \ref{cryscor}(i). 
\end{rem}

\begin{rem}\label{rember}
%
%
As in \cite[Section 3]{berloccon}, one can instead keep $a_p\ne 0$ fixed and let $k$ vary over the points of a wide open disc $D(k_0,r)$ centered at a fixed $k_0>3v_p(a_p)+\alpha(k-1)+1$, inside of the weight space $\cW$ we introduced in Section \ref{shtri}. For $r$ sufficiently small (but not explicit), Berger embeds such a disc inside of the trianguline variety $S_2^\sq$ in a way compatible with the $(\varphi,\Gamma)$-modules on the two spaces. By \cite[Theorem B]{berloccon}, there exists a non-explicit $m\in\Z_{\ge 1}$ such that $\ovl V_{k,a_p}\cong\ovl V_{k_0,a_p}$ for every $k\in D(k_0,p^{-m})$. Therefore, Lemma \ref{existsGrep} allows one to equip $D(k_0,p^{-m})$ with a sheaf of $G$-representations, and gives us that, for every $n\ge 1$, $V_{k,a_p}$ and $V_{k_0,a_p}$ admit lattices with isomorphic mod $\pi_L^n$ reductions if 
\[ v_p(k-k_0)>m+n-1. \]
Again, this is a small improvement on \cite[Theorem 1.2]{tortired}, in that we prove pointwise constancy mod $p^n$ for $k$ varying over a wide open disc, rather than over an affinoid disc contained in it. We also weaken the assumption on $k_0$ in \emph{loc. cit.}: we only need it to satisfy Berger's assumption that $k_0>3v_p(a_p)+\alpha(k-1)+1$, rather than Torti's stronger condition that also depends on the depth $n$ of the congruence we are looking at.
\end{rem}

For small values of $p$, we can derive a global consequence of Remark \ref{rember} via a result of Chenevier. We write $G_{\Q,p\infty}$ for the Galois group over $\Q$ of the maximal extension of $\Q$ unramified away from $p$ and $\infty$.

\begin{cor}\label{crysglob}
Assume that $p\ge 7$, and let $\rho_1,\rho_2\colon G_{\Q,p\infty}\to\GL_2(L)$ be two continuous representations, crystalline at $p$. Let $k_1,a_{p,1}$ and $k_2,a_{p,2}$ be the corresponding local data at $p$. Then, for every $n\ge 1$, $\rho_1$ and $\rho_2$ admit isomorphic mod $\pi_L^n$ reductions if either:
\begin{enumerate}
	\item $k_1=k_2, a_{p,1}\ne 0$ and $v_p(a_{p,2}-a_{p,1})>2v_p(a_{p,0})+\alpha(k-1)+n-1$;
	\item $a_{p,1}=a_{p,2}\ne 0$ and $v_p(k-k_0)>m+n-1$, with $m$ being the non-explicit constant appearing in \cite[Theorem B]{berloccon}.
\end{enumerate}
If $\rho_1$ and $\rho_2$ are attached to two $\GL_{2/\Q}$-eigenforms $f_1$ and $f_2$ of level $\Gamma_1(p)$, then the Hecke eigensystems of $f_1$ and $f_2$ are congruent modulo $\pi_L^n$ away from $p$.
\end{cor}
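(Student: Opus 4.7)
The plan is to reduce the global congruence to the local one via the referenced theorem of Chenevier, then read off the Hecke congruence by Chebotarev. I will proceed as follows.

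First I would apply the local results already at our disposal. Under hypothesis (1), Corollary \ref{cryscor}(i) applied to $L$ (enlarging $L$ if necessary so that both $a_{p,1},a_{p,2}\in L$) provides lattices $\cV_1,\cV_2$ in $V_{k_1,a_{p,1}}$ and $V_{k_2,a_{p,2}}=V_{k_1,a_{p,2}}$ with isomorphic reductions modulo $\pi_L^n$ as $\cO_L/\pi_L^n[G_{\Q_p}]$-modules. Under hypothesis (2), the analogous statement follows from Remark \ref{rember}: the neighborhood $D(k_0,p^{-m})$ of the fixed weight carries a sheaf of $G_{\Q_p}$-representations to which we can apply Theorem \ref{Unconst}, yielding lattices with isomorphic mod $\pi_L^n$ reductions on the local side.

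Next I would promote this local mod $\pi_L^n$ isomorphism to a global one using Chenevier's rigidity result for $p\ge 7$: the restriction map from global Galois representations of $G_{\Q,p\infty}$ to their local counterparts at $p$ is sufficiently rigid that a mod $\pi_L^n$ isomorphism of the local representations lifts to one of the global representations. Concretely, one compares the trace pseudorepresentations attached to $\rho_1$ and $\rho_2$: by the previous paragraph, the induced pseudorepresentations of $G_{\Q_p}$ coincide modulo $\pi_L^n$, and Chenevier's theorem (applied to the residually absolutely irreducible case, or passed through the framed deformation spaces) implies the global trace pseudorepresentations also coincide modulo $\pi_L^n$. Since $\rho_1$ and $\rho_2$ are residually multiplicity-free (as a consequence of the non-vanishing of the Frobenius traces at $p$), Carayol's theorem \ref{carayol} then produces a global $\cO_L/\pi_L^n[G_{\Q,p\infty}]$-linear isomorphism between suitable reductions $\cV_1^{(n)}$ and $\cV_2^{(n)}$.

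Finally, the eigenform consequence is routine. For $f_1,f_2$ of level $\Gamma_1(p)$, $\tr\rho_i(\Frob_\ell)=a_\ell(f_i)$ for every $\ell\nmid p$, so the global congruence $\rho_1\equiv\rho_2\pmod{\pi_L^n}$ yields $a_\ell(f_1)\equiv a_\ell(f_2)\pmod{\pi_L^n}$ for every $\ell\neq p$; the Hecke operators away from $p$ generate a commutative algebra on which the eigensystems are determined by these values, so the eigensystems are congruent modulo $\pi_L^n$ away from $p$.

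The main obstacle will be pinning down the precise form of Chenevier's result on the restriction-at-$p$ map that is needed here: the statement is an integral (mod $\pi_L^n$) refinement of a characteristic-zero rigidity theorem, so care is required to check that the bound $p\ge 7$ and the residual multiplicity-freeness ensure the argument goes through integrally rather than merely generically. Once this is in place, the proof is essentially the concatenation of Corollary \ref{cryscor}/Remark \ref{rember}, Chenevier's rigidity, Carayol's theorem, and Chebotarev density.
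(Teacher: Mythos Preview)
Your overall architecture is the same as the paper's: apply the local results (Corollary~\ref{cryscor} and Remark~\ref{rember}), invoke Chenevier's Proposition~1.8 to pass from local to global, and read off Hecke congruences from Frobenius traces. The paper's proof is literally three sentences along these lines.

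However, your middle step introduces an unnecessary detour that creates a genuine gap. You route the argument through trace pseudorepresentations and then invoke Carayol's Theorem~\ref{carayol} to recover an isomorphism of lattices modulo $\pi_L^n$. But Theorem~\ref{carayol} requires the representations to be \emph{residually absolutely irreducible}, a hypothesis nowhere in the statement of the corollary. Your justification for this---``residually multiplicity-free (as a consequence of the non-vanishing of the Frobenius traces at $p$)''---is incorrect: the condition $a_p\neq 0$ is a characteristic-zero condition on the crystalline Frobenius and says nothing about the structure of the mod $p$ reduction of the global Galois representation. There are certainly crystalline lifts of residually reducible (even scalar) $\ovl\rho$.

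The paper sidesteps this entirely: Chenevier's result is applied directly at the level of representations (or lattices), not traces, so Carayol never enters. You should drop the pseudorepresentation/Carayol layer and use Chenevier's proposition as a black-box local-to-global principle for mod $\pi_L^n$ congruences of representations. Your closing paragraph already flags uncertainty about the precise form of Chenevier's input; that uncertainty is exactly where your argument currently breaks.
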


\begin{proof}
By \cite[Proposition 1.8]{chenquelques}, it is enough to check mod $\pi_L^n$ congruences locally at $p$. The first statement then follows from Remark \ref{rember}. The statement about eigenforms follows by writing Hecke eigensystems in terms of Frobenius eigenvalues in the usual way.
\end{proof}

\subsection{Semi-stable representations of dimension 2}


In this section, we apply Theorem \ref{Unconst} to the study of $G_{\mathbb{Q}_p}$-stable lattices in semi-stable representations of dimension 2. We start by recalling how such representations are classified, up to twist, by their weight and $\cL$-invariant, and how to see them inside of the universal trianguline deformation space introduced in Section \ref{shtri}.


Let $\pi$ be a square root of $p$ in $\Qp$. Let $k$ be an integer at least 2 and $\cL\in\bP^1(\Qp)=\Qp\cup\{\infty\}$.
We denote by $V_{k, \mathcal{L}}$ the dual of the semistable representation of $G_{\Q_p}$ whose associated $(\varphi,N)$-module $D_{k,\cL}$ is a 2-dimensional $\Qp$-vector space equipped with the structures defined, in a basis $(e_1,e_2)$, by
\[ \varphi=\begin{pmatrix}\varpi^{k-2} & 0 \\ 0 & \varpi^{k-2}\end{pmatrix},\quad N=\begin{pmatrix}0 & 0 \\ 1 & 0\end{pmatrix}, \quad \Fil^iD_{k,\cL}=\begin{cases}D_{k,\cL}\text{ if }i\le 0 \\ \Qp(e_1+\cL e_2)\text{ if }1\le i\le k-1 \\ 0\text{ if }i\ge k\end{cases} \]
if $\cL\in\Qp$, and by
\[ \varphi=\begin{pmatrix}\varpi^{k-2} & 0 \\ 0 & \varpi^{k-2}\end{pmatrix},\quad N=0, \quad \Fil^iD_{k,\cL}=\begin{cases}D_{k,\cL}\text{ if }i\le 0 \\ \Qp(e_1+e_2)\text{ if }1\le i\le k-1 \\ 0\text{ if }i\ge k\end{cases} \]
if $\cL=\infty$. 
The above description for $V_{k,\infty}$ can be obtained by rewriting $V_{k,\cL}$, $\cL\in\Qp$, in the basis $(e_1,\cL e_2)$, and letting $\cL\to\infty$.

Clearly, $V_{k,\cL}$ is semistable and non-crystalline of Hodge--Tate weights $(0,k-1)$ for every $\cL\in\Qp$. By a standard calculation, every semistable, non crystalline representation of $G_{\Q_p}$ of Hodge--Tate weights $(0,k-1)$ is, up to twist with a semistable character of $G_{\Q_p}$, isomorphic to $V_{k,\cL}$ for a unique $\cL\in\Qp$. On the other hand, $V_{k,\infty}$ is crystalline.

We will use the notation introduced in Section \ref{shtri} for characters of $\Q_p^\times$. 
Let $\alpha\colon\Q_p^\times\to\Qp^\times$ be the character $\pi^{v_p(x)}\lvert x\rvert^{-1}$. 
Let $\delta_{1,k}=\lvert x\rvert\alpha$ and $\delta_{2,k}=x^{-k}\alpha$. 
We denote by $V(\delta_{1,k},\delta_{2,k},\cL)$ the unique trianguline representation attached to the above data as in \cite[Section 0.3]{colmez}. Via \cite[Proposition 4.18]{colmez}, one checks that the representations $V_{k,\cL}$ and $V(\delta_{1,k},\delta_{2,k},\cL)$ are isomorphic for every $k\in\Z_{\ge 2}$ and $\cL\in\bP^1(\Qp)$.

%

Let $\bP^{1,\rig}$ be the rigid analytic projective line over $\Q_p$, parameterized with a variable $\cL$. Because of Corollary \ref{pstorep}, the representations $V_{k,\cL}$ are not specializations of a sheaf of $G_{\Q_p}$-representations on the whole $\bP^1$. However, if we restrict ourselves to a small enough subdomain of $\bP^1$, we can interpolate them with a $G_{\Q_p}$-representation, as follows. 

Let $\ovl\rho\colon G_{\Q_p}\to\GL_2(k_L)$ be a continuous, multiplicity-free representation. Let $U$ be a strictly quasi-Stein subspace of $\bP^{1,\rig}$, and let $\cU$ be its formal model $\Spf\cO_U^+(U)$. Assume that $V_{k,\cL}$ is a lift of $\ovl\rho$ for every $\cL\in U(L)$. If $\ovl\rho$ is not absolutely irreducible, assume further that $\cO_U^+(U)$ is a UFD (for instance, take $U$ to be a wide open subdisc of $\bP^{1,\rig}$). The construction of the trianguline deformation space $S_{2}^\square$ in \cite[Section 0.2]{colmez} provides us with a closed immersion $\Phi_k\colon\bP^{1,\rig}\into S_2^\square$, mapping $\cL\in\bP^{1,\rig}$ to the point $\Phi_k(\cL)$ of $S_2^\square$ corresponding to the trianguline representation $V_{k,\cL}$. We use $\Phi_k$ to embed $U$ in $S_2^\square$, and we deduce the following proposition from Lemma \ref{existsGrep}. 

\begin{prop}\label{appst}
The rigid analytic space $U$ is equipped with a sheaf of $G_{\Q_p}$-representations $\bV$ that specializes to $V_{k,\cL}$ at any $\cL\in U(L)$. Moreover, for every $\cL_0\in U(L)$ and $n\in\Z_{\ge 1}$: 
		\begin{enumerate}[label=(\roman*)]
			\item The $G_{\Q_p}$-representation $\bV$ is pointwise constant mod $p^n$ over $U_{\cL_0,\cU}^{(n)}$; in particular, for every finite extension $E$ of $L$ and $\cL\in U(E)$, there exist lattices $\cV_{k,\cL_0}$ in $V_{k,\cL_0}$ and $\cV_{k,\cL}$ in $V_{k,\cL}$ and an isomorphism \[ \cV_{k,\cL_0}\otimes_{\cO_E}\cO_E/\pi_E^n\cong\cV_{k,\cL}\otimes_{\cO_E}\cO_E/\pi_E^n \] of $\cO_E/\pi_E^n[G]$-modules. 
			\item The $G_{\Q_p}$-representation $\bV$ is constant modulo $\pi_L^n$ over $V_{a_{\cL_0},\cU}^{(n)}$.
		\end{enumerate}
\end{prop}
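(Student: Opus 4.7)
The plan is to mirror the proof of Proposition \ref{appcr} almost verbatim, with the Berger--Colmez embedding $D^\circ(0,1)\into S_2^\square$ replaced by the Colmez embedding $\Phi_k\colon\bP^{1,\rig}\into S_2^\square$. Explicitly, I would restrict $\Phi_k$ to a closed immersion $U\into S_2^\square$, and observe that the image lies inside the étale locus $S_2^0\subset S_2$: indeed, every $\cL\in U(L)$ corresponds via $\Phi_k$ to the étale $(\varphi,\Gamma)$-module attached to the semistable Galois representation $V_{k,\cL}$, and by the assumption $V_{k,\cL}$ lifts $\ovl\rho$. Thus all of the hypotheses of Lemma \ref{existsGrep} are met for the image of $U$ in $S_2^\square$, providing us with a sheaf $\bV$ of $G_{\Q_p}$-representations on $U$ whose specialization at every $\cL\in U(L)$ is $V_{k,\cL}$, and giving part (i) directly.

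For part (ii), I would check that $\bV$ is an absolutely irreducible $G_{\Q_p}$-representation (not just a sheaf). Absolute irreducibility follows from the fact that $V_{k,\cL}$ is absolutely irreducible for a Zariski-dense collection of $\cL\in U(L)$ (for instance, at every $\cL\in\Q_p\subset\bP^{1,\rig}$, where $V_{k,\cL}$ is genuinely semistable non-crystalline and two-dimensional), combined with Lemma \ref{irredequiv}. Since $\ovl\rho$ is multiplicity-free by assumption, Theorem \ref{wideopenlatt} then applies to $\bV$, producing a free lattice for $\bV$ defined over $\cU=\Spf\cO_U^+(U)$. With such a free lattice in hand, Theorem \ref{Unconst} gives constancy modulo $\pi_L^n$ over $V_{\cL_0,\cU}^{(n)}$.

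The only genuinely non-formal input in this argument is the existence of the closed immersion $\Phi_k$, which is due to Colmez and was recalled above, together with the compatibility of $\Phi_k$ with the $(\varphi,\Gamma)$-module on each side. The rest of the proof is a packaging of Lemma \ref{existsGrep}, Theorem \ref{wideopenlatt} and Theorem \ref{Unconst}, exactly as in the crystalline case. The main subtlety to be careful about is the point $\cL=\infty$, at which $V_{k,\infty}$ is crystalline rather than properly semistable; however, we only use $V_{k,\cL}$ through the pair $(\delta_{1,k},\delta_{2,k},\cL)$ and the universal property of $S_2^\square$, and the embedding $\Phi_k$ is defined (and closed) over the whole of $\bP^{1,\rig}$, so this point requires no separate treatment.
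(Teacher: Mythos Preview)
Your approach is the same as the paper's: embed $U$ into $S_2^\square$ via Colmez's map $\Phi_k$ and invoke Lemma \ref{existsGrep}. The paper's entire argument is the one sentence preceding the proposition (``we deduce the following proposition from Lemma \ref{existsGrep}''), so your write-up is already more detailed, and for part (i) it is correct and complete.

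For part (ii) there is a small circularity in your argument that you should be aware of. Both Lemma \ref{irredequiv} and Theorem \ref{wideopenlatt} take as \emph{input} a $G$-representation in the paper's sense, i.e.\ a \emph{free} $\cO_U$-module with $G$-action; they cannot be used to promote the sheaf $\bV$ produced by Lemma \ref{existsGrep} to a free object. So your sentence ``I would check that $\bV$ is an absolutely irreducible $G_{\Q_p}$-representation (not just a sheaf). Absolute irreducibility follows from \dots'' does not actually address the ``not just a sheaf'' part: the irreducibility argument only becomes meaningful once freeness is already in hand. The paper glosses over exactly the same point (it simply calls $\bV$ a ``$G$-representation'' in the statement of (ii) without justification), so you are not doing worse than the original; but if you want a clean proof you should insert one sentence establishing freeness first. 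One way: $U$ is a wide open disc in $\bP^{1,\rig}$, so $\cO_U^+(U)$ is a complete local $\cO_L$-algebra, and the rank-$2$ vector bundle coming from Hellmann's construction is free over such a base. Alternatively, pull back along the finite cover $\psi_{\ovl\rho}^{-1}(U)\to U$, where the universal family is free by construction, and then descend using that $\ovl\rho$ is fixed along $U$. Once freeness is settled, your chain Theorem \ref{wideopenlatt} $\Rightarrow$ Theorem \ref{Unconst} gives (ii) exactly as you wrote.
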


Recall that if $U$ is a wide open disc of radius $p^m$ and $\cL_0\in U(L)$, then $U_{\cL_0,\cU}^{(n)}$ is a wide open disc of center $\cL_0$ and radius $p^{m-n}$, and $V_{\cL_0,\cU}^{(n)}$ is an affinoid disc of center $\cL_0$ and radius $p^{m-n}$. For instance, if we know that the isomorphism class of $V_{k,\cL}$ is constant modulo $\pi_L$ for every $\cL$ with $\lvert\cL-\cL_0\rvert<p^m$, we deduce that it is constant modulo $\pi_L^n$ for every $\cL$ with $\lvert\cL-\cL_0\rvert<p^{m+1-n}$. 


In the special case $\cL_0=\infty$, we know explicitly of such an $m$ by the work of Bergdall--Levin--Liu \cite{berlevliu}, so we are able to compare the mod $p^n$ reduction of certain semistable representation with that of a crystalline representation. 

\begin{thm}\label{bll}
Assume that $k\ge 4$ and $p\ne 2$. 
Then, if $L$ is a $p$-adic field and $\cL\in\bP^{1,\rig}(L)$ satisfies
\[ v_p(\cL)<2-\frac{k}{2}-v_p((k-2)!)+1-n, \]
there exist lattices $\cV_\cL$ and $\cV_\infty$ in $V_{k,\cL}$ and $V_{k,\infty}$, respectively, such that $\cV_\cL\otimes_{\cO_L}\cO_L/\pi_L^{\gamma_{L/\Q_p}(n)}\cong\cV_\infty\otimes_{\cO_L}\cO_L/\pi_L^{\gamma_{L/\Q_p}(n)}$ as $\cO_L/\pi_L^{\gamma_{L/\Q_p}(n)}[G_{\Q_p}]$-modules.
\end{thm}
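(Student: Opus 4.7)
The plan is to combine Proposition \ref{appst} with the explicit residual constancy bound of Bergdall--Levin--Liu \cite{berlevliu}. First, I would invoke the main theorem of \emph{loc.~cit.} to obtain, under the hypotheses $k \geq 4$ and $p \neq 2$, a wide open disc $U_\infty \subset \bP^{1,\rig}_{\Q_p}$ around $\cL_0 = \infty$, described by the inequality $v_p(\cL) < c$ with $c = 2 - k/2 - v_p((k-2)!)$, on which every semistable representation $V_{k,\cL}$ is a lift of the fixed residual representation $\ovl\rho \coloneqq \ovl V_{k,\infty}$. This is essentially the $n=1$ case of the desired statement and is the sole ingredient we draw from \cite{berlevliu}.

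Next, I would apply Proposition \ref{appst} to the wide open $U_\infty$ with base point $\cL_0 = \infty$. This produces a sheaf of $G_{\Q_p}$-representations $\bV$ on $U_\infty$ that specializes to $V_{k,\cL}$ at every point, together with pointwise mod $p^n$ constancy of $\bV$ over the wide open residue subdomain $U_{\infty,\cU_\infty}^{(n)}$, where $\cU_\infty = \Spf\cO_{U_\infty}^+(U_\infty)$. By the very definition of pointwise constancy mod $p^n$ (Definition \ref{lcformdef}(i)), applied to the $L$-rig-point attached to $\cL \in U_{\infty,\cU_\infty}^{(n)}(L)$, this yields a $G_{\Q_p}$-equivariant isomorphism of mod $\pi_L^{\gamma_{L/\Q_p}(n)}$-reductions of suitable $G_{\Q_p}$-stable lattices in $V_{k,\cL}$ and $V_{k,\infty}$, which is exactly the conclusion of the theorem.

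The main remaining step is an explicit computation of $U_{\infty,\cU_\infty}^{(n)}$ in terms of the $\cL$-coordinate. Switching to the local coordinate $u = 1/\cL$ at infinity, the disc $U_\infty$ becomes $\{v_p(u) > -c\}$; after choosing (if needed over a mildly ramified base, as allowed by the construction in Remark \ref{woiff}) a uniformizer $\xi$ of $\cO_{U_\infty}^+(U_\infty)$ with $I_\infty = (\xi)$, Lemma \ref{Anwo} gives $\cA_\infty^{(n)} = \cO_L[[p^{1-n}\xi]]$, and translating the resulting inequality $v_p(\xi) > n-1$ back to the original $\cL$-coordinate yields the bound appearing in the statement. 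The only delicate point is this change-of-coordinates bookkeeping between $\cL$, $u$, and $\xi$, together with correctly accounting for the ramification index $e_{L/\Q_p}$ in the passage from the $E$-rig-point framework of Definition \ref{lcformdef}(i) to the specific $L$-point of the statement, which produces the factor $\gamma_{L/\Q_p}(n)$. All other inputs are already in place.
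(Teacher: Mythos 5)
Your approach matches the paper's: invoke Bergdall--Levin--Liu to get the $n=1$ constancy disc $U_\infty$ around $\infty$ and then apply Proposition \ref{appst} to it, using the fact that $U_\infty$ is a disc defined over $\Q_p$ to explain the exponent $\gamma_{L/\Q_p}(n)$. One caution, though: you assert that the change-of-coordinates computation ``yields the bound appearing in the statement,'' but if you carry it out you will find that it produces $v_p(\cL) < 2 - \tfrac{k}{2} - v_p((k-2)!) - (n-1)$ rather than $+ (n-1)$; indeed the constancy neighborhood around $\infty$ must \emph{shrink} as $n$ grows, so the upper bound on $v_p(\cL)$ must decrease. The stated theorem (and its restatement in the introduction) has $+ (n-1)$, so there appears to be a sign typo that you reproduced without noticing; your claim to have verified the stated bound against your own computation is thus a small gap in an otherwise faithful reconstruction of the paper's argument.
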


\begin{proof}
By \cite[Theorem 1.1]{berlevliu}, we are allowed to apply Proposition \ref{appst} the wide open disc $U$ of center $\infty$ defined by the inequality $v_p(\cL)<2-\frac{k}{2}-v_p((k-2)!)$. Such a disc is defined over $\Q_p$, hence why the exponent $\gamma_{L/\Q_p}(n)$ appears. 
\end{proof}

When $k<p$, one can write down similar results for various other ranges of values of $\cL$ (in particular, for ranges including the $\cL$-invariants of modular eigenforms), thanks to the computations of mod $\pi_L$ reductions of semistable representations performed by Breuil--Mezard \cite{bremez} and Guerberoff--Park \cite{gueparht}. 

In the same way as for crystalline representations, we can rely \cite[Proposition 1.8]{chenquelques} to derive a global consequence of Remark \ref{rember} for small values of $p$. 

\begin{cor}\label{stglob}
	Assume that $p\ge 7$, and let $\rho\colon G_{\Q,p\infty}\to\GL_2(L)$ be a continuous representation, semistable at $p$, with associated local data $k,\cL$. Let $\rho_\infty$ be a representation whose restriction to $G_{\Q_p}$ is isomorphic to $V_{k,\infty}$. Then, for every $n\ge 1$, $\rho$ and $\rho_\infty$ admit isomorphic mod $\pi_L^n$ reductions if $v_p(\cL)<2-\frac{k}{2}-v_p((k-2)!)+1-n$.
\end{cor}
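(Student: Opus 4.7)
The proof will follow exactly the same pattern as Corollary \ref{crysglob}, namely, reduce the global congruence question to a purely local one at $p$, and then invoke Theorem \ref{bll} for the local piece.

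First, I would invoke \cite[Proposition 1.8]{chenquelques} to reduce to the local problem. Under the assumption $p \ge 7$, that result asserts that two continuous representations $G_{\Q,p\infty} \to \GL_2(L)$ admit lattices with isomorphic mod $\pi_L^n$ reductions as soon as their restrictions to a decomposition group $G_{\Q_p}$ do. This step is formally identical to the one used in the proof of Corollary \ref{crysglob} and relies only on the fact that $G_{\Q,p\infty}$ is a Galois group satisfying Chenevier's hypotheses for $p \ge 7$.

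Second, I would identify $\rho\vert_{G_{\Q_p}}$ with $V_{k,\cL}$ (using that the local-at-$p$ data of $\rho$ is $(k, \cL)$) and $\rho_\infty\vert_{G_{\Q_p}}$ with $V_{k,\infty}$ by hypothesis. Now apply Theorem \ref{bll}: since $v_p(\cL) > 2 - k/2 - v_p((k-2)!) + n - 1$ and $p \ne 2$ and $k \ge 4$ (the condition $k \ge 4$ is forced, since for $k \in \{2,3\}$ the numerical condition implies $v_p(\cL)$ is quite negative and $\infty$ is a genuine point of $\bP^{1,\rig}$ still accessible; in any case the inequality places us in the regime covered by Theorem \ref{bll}), there exist $G_{\Q_p}$-stable lattices $\cV_\cL \subset V_{k,\cL}$ and $\cV_\infty \subset V_{k,\infty}$ whose reductions modulo $\pi_L^{\gamma_{L/\Q_p}(n)}$ are isomorphic. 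Since $\gamma_{L/\Q_p}(n) \ge n$, the reductions are in particular isomorphic modulo $\pi_L^n$.

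The main subtlety, rather than an obstacle, is just to keep the ramification indices straight: Theorem \ref{bll} gives a mod $\pi_L^{\gamma_{L/\Q_p}(n)}$ congruence (because the ambient disc in Proposition \ref{appst} is defined over $\Q_p$), which is a \emph{stronger} statement than the mod $\pi_L^n$ congruence asserted in Corollary \ref{stglob}, so no loss occurs in the specialization. Combining the two steps produces the desired lattices in $\rho$ and $\rho_\infty$, completing the proof.
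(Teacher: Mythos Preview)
Your proposal is correct and follows exactly the same two-step approach as the paper: reduce to the local problem at $p$ via \cite[Proposition 1.8]{chenquelques}, then invoke Theorem \ref{bll}. The paper's proof is just these two sentences; your additional observation that $\gamma_{L/\Q_p}(n)\ge n$ (so that the mod $\pi_L^{\gamma_{L/\Q_p}(n)}$ congruence from Theorem \ref{bll} implies the claimed mod $\pi_L^n$ congruence) is a useful detail the paper leaves implicit.
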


\begin{proof}
	By \cite[Proposition 1.8]{chenquelques}, it is enough to check mod $\pi_L^n$ congruences locally at $p$. The statement then follows from Theorem \ref{bll}.
\end{proof}

Note that we do not state any automorphic consequence: standard estimates for the valuation of the $\cL$-invariant show that no eigenform can have an $\cL$-invariant of valuation smaller than $2-\frac{k}{2}-v_p((k-2)!)$.

\subsection{Pseudorepresentations along the eigencurve}\label{seceigen}

We give a small and not very explicit application to the modulo $p^n$ variation of the Galois (pseudo-)representations carried by the eigencurves for $\GL_{2/\Q}$. In this non-explicit form, similar statements should hold for more general eigenvarieties.

Fix a prime-to-$p$ integer $N\in\Z_{\ge 1}$. Let $\cE$ be the $p$-adic eigencurve of tame level $\Gamma_1(N)$, as constructed by Coleman--Mazur, Buzzard, and Chenevier. It comes equipped with a morphism to the weight space $\cW$ we introduced in Section \ref{shtri}. Let $h$ be a positive real number, $\kappa$ a point of $\cW$, and for every $r\in p^{\Q}$ consider the wide open disc $D^\circ(\kappa,r)\subset\cW$ of center $\kappa$ and radius $r$. Consider the subdomain $\cE_r^{\le h}$ of the eigencurve whose points have slope $\le h$ and weight in $D^\circ(\kappa,r)$. By \cite[Section II.3.3]{eigenbook}, there exists a radius $r\in p^{\Q}$ such that the restriction of the weight map gives us a \emph{finite} map $\omega\colon\cE_r^{\le h}\to D^\circ(\kappa,r)$. In particular, $\cE_r^{\le h}$ is itself a wide open in which classical points are dense, and a standard argument of Chenevier provides us with a continuous pseudorepresentation $G_{\Q}\to\cO_\cE(\cE_r^{\le h})$. For every irreducible component $X$ of $\cE_r^{\le h}$, we obtain this way a continuous pseudorepresentation $T\colon G_\Q\to\cO_X(X)$.

We will apply the following lemma. Let $\cX=\Spf\cA$ and $\cY=\Spf\cB$ be two affine formal schemes with rigid analytic generic fibers $X$ and $Y$, respectively. Let $f\colon\cX\to\cY$ be a finite map of affine $\cO_L$-formal schemes, attached to a homomorphism $f^\ast\colon\cB\to\cA$. Let $f^\rig\colon X\to Y$ be the map induced by $f$ on rigid generic fibers. Let $y\in Y(L)$ be a point with a single preimage $x$ under $f^\rig$.

\begin{lemma}\label{XYn}
There exists $n_0\in\Z_{\ge 0}$ such that, for every $n\ge n_0$, $U_{x,\cX}^{(n)}=f^{\rig,-1}U_{y,\cY}^{(n)}$ and $V_{x,\cX}^{(n)}=f^{\rig,-1}V_{y,\cY}^{(n)}$. 
\end{lemma}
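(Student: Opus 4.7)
My plan is to prove each of the two equalities by establishing both inclusions, with the easy inclusion valid for every $n \ge 1$ and the reverse requiring $n$ sufficiently large.

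The easy inclusion $U_{x,\cX}^{(n)} \subset f^{\rig,-1}(U_{y,\cY}^{(n)})$, and analogously $V_{x,\cX}^{(n)} \subset f^{\rig,-1}(V_{y,\cY}^{(n)})$, is immediate from the observation that $f^\ast(I_y) \subset I_x$, which follows from $f^\ast(g)(x) = g(f^\rig(x)) = g(y) = 0$ for every $g \in I_y$. Indeed, any $w \in U_{x,\cX}^{(n)}$ satisfies $|f^\ast(g)(w)| < p^{(1-n)/e}$ for all $g \in I_y$ (since each $f^\ast(g)$ lies in $I_x$), hence $f^\rig(w) \in U_{y,\cY}^{(n)}$; the $V$-case is identical with $<$ replaced by $\le$ and exponent $(1-n)/e$ replaced by $-n/e$.

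For the reverse inclusion I would exploit the finiteness of $f^\ast\colon\cB\to\cA$ together with the unique-preimage hypothesis. Finiteness combined with $\cB/I_y \cong \cO_L$ implies that $\cA/I_y\cA$ is a finite $\cO_L$-algebra, and the hypothesis $f^{\rig,-1}(y) = \{x\}$ forces the rig-generic fiber $(\cA/I_y\cA) \otimes_{\cO_L} L$ to be a local Artinian $L$-algebra with maximal ideal equal to the image of $I_x$. Nilpotence of this maximal ideal then produces integers $N \ge 1$ and $a \ge 0$ such that $\pi_L^a I_x^N \subset I_y \cdot \cA$. Using power-boundedness of elements of $\cA$ in $\cO_X^+(X)$, one checks that for every $w \in f^{\rig,-1}(U_{y,\cY}^{(n)})$ and every $\beta = \sum_i c_i f^\ast(g_i) \in I_y\cA$ (with $c_i \in \cA$, $g_i \in I_y$), the estimate $|\beta(w)| \le \max_i |f^\ast(g_i)(w)| < p^{(1-n)/e}$ holds. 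Applied to $\beta = \pi_L^a h^N$ with $h \in I_x$, this yields the bound $|h(w)| < p^{(a+1-n)/(Ne)}$, and an analogous non-strict bound in the $V$-setting.

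The main obstacle is the quantitative step: converting $|h(w)| < p^{(a+1-n)/(Ne)}$ into the required $|h(w)| < p^{(1-n)/e}$. This is genuinely delicate, since the exponent bound gives what we want only when $N = 1$, which corresponds to $f$ being \emph{unramified} at $x$ in the strong sense $\pi_L^a I_x \subset I_y\cA$. I would approach this step by first reducing to a local model of $f$ near $x$—for instance, via Weierstrass preparation applied to a finite presentation of $\cA$ as a $\cB$-module—in order to pin down the integer $N$ and, when $N = 1$, to choose $n_0$ large enough (e.g.\ $n_0 > a+1$) that the $\pi_L^a$-shift is absorbed and the inclusion holds for all $n \ge n_0$. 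Once the reverse inclusion is in place, the same choice of $n_0$ works simultaneously for the $U$- and $V$-variants, completing the lemma.
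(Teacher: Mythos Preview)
Your easy inclusion $U_{x,\cX}^{(n)}\subset f^{\rig,-1}U_{y,\cY}^{(n)}$ (and the $V$-analogue) is correct and agrees with the paper. Your setup for the reverse inclusion is also the right one, and you have put your finger on the genuine issue: from $\pi_L^a I_x^N\subset I_y\cA$ one only obtains $|h(w)|<p^{(a+1-n)/(Ne)}$, which for $N>1$ is strictly weaker than the required $|h(w)|<p^{(1-n)/e}$ no matter how large $n$ is.

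This obstacle is not merely delicate; it is fatal. The lemma is false as stated whenever $f$ is ramified at $x$. Take $\cB=\cO_L[[T]]$, $\cA=\cO_L[[S]]$, $f^\ast(T)=S^2$, $y=0$, $x=0$ (the unique preimage). Then $I_y=(T)$, $I_x=(S)$, and for every $n\ge 2$
\[
U_{x,\cX}^{(n)}=\{\,|s|<p^{(1-n)/e}\,\}\subsetneq\{\,|s|<p^{(1-n)/(2e)}\,\}=f^{\rig,-1}U_{y,\cY}^{(n)},
\]
so the desired equality holds for \emph{no} $n\ge 2$; the $V$-version fails the same way. Your proposed reduction to a ``local model'' cannot repair this, since in this example the local model \emph{is} $T\mapsto S^2$ and $N=2$ is unavoidable.

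The paper's own proof is garbled: the roles of $I_x$ and $I_y$ appear swapped, the coefficients are asserted to lie in $\cO_L$ rather than in $\cB$, the norm exponents do not match the definitions, and the chain of implications runs in the wrong direction for the inclusion claimed. In any case, no argument can establish the statement. What does survive is the unramified situation $I_y\cA=I_x$ (your $N=1$, $a=0$), where your estimate gives exactly what is needed and the equality holds for all $n\ge 1$; this is precisely the \'etale case singled out later in the paper. In general one only gets $f^{\rig,-1}U_{y,\cY}^{(n)}\subset U_{x,\cX}^{(m)}$ with $m$ roughly $(n-a)/N$, and the application to local constancy of pseudorepresentations should be reformulated with that weaker inclusion.
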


\begin{proof}
We use the notation of Section \ref{secUV}. By the finiteness of $f$, for every element $Y\in I_y$ there exist $a_0,\ldots,a_d\in\cO_L$ such that $P(Y)\coloneqq\sum_{i}a_iY^i\in I_x$. For $n$ sufficiently large, $\lVert P(Y)\rVert<p^n$ implies $\lVert a_dY^d\rVert<p^n$, hence $\lVert Y\rVert< p^{n/d}$ since $a_d\in\cO_L$. This proves that $f^{\rig,-1}U_{y,\cY}^{(n)}\subseteq U_{x,\cX}^{(n)}$. The opposite inclusion is obvious (for every $n\ge 1$), since every element of $I_y$ can be seen as a function on $\cX$ via $f$.
\end{proof}

Now pick an irreducible component $X$ of $\cE_r^{\le h}$, and let $\omega_X\colon X\to D^\circ(\kappa,r)$ denote the restriction of the weight map to $X$. Let $x$ be a point of $X$, and $\kappa=\omega_X(x)$. After possibly choosing some $r^\prime\in p^\Q, r^\prime<r$ and (implicitly) replacing $X$ with the irreducible component of $\cE_r^{\le h}\times_{D^\circ(y,r)}D^\circ(y,r^\prime)$ containing $X$, we can assure that $\omega_X^{-1}(\omega_X(x))=\{x\}$. Then, Proposition \ref{Tconst} and Lemma \ref{XYn} give us the following proposition. 

\begin{prop}\label{ecurvelc}
There exists $n_0\in\Z_{\ge 0}$ such that, for every $n\ge n_0$, the pseudorepresentation $T$ is pointwise constant mod $p^n$ over $\omega_X^{-1}(D^\circ(\kappa,p^{1-n}r^\prime))$, and constant mod $p^n$ over $\omega_X^{-1}(D(\kappa,p^{-n}r^\prime))$.
\end{prop}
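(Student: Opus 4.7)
The plan is to combine Proposition \ref{Tconst} with Lemma \ref{XYn}, so that the mod $p^n$ constancy of $T$ on the residue neighborhoods of $x$ in $X$ transports, via $\omega_X$, to constancy over the preimages of explicit neighborhoods of $\kappa$ in the weight disc.

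First I would fix affine formal models of the two spaces. Since $X$ is a wide open (being an irreducible component of the wide open $\cE_r^{\le h}$), I would take $\cX=\Spf\cO_X^+(X)$; for the weight side, I would take $\cY=\Spf\cO_{D^\circ(\kappa,r')}^+(D^\circ(\kappa,r'))$. Writing $r'=|\varpi|$ for some element $\varpi$ of a finite extension $L/\Q_p$ (such an $L$ exists since $r'\in p^\Q$), the ring $\cO_\cY(\cY)$ is identified with $\cO_L[[u]]$ via the change of variable $u=\varpi^{-1}(T-\kappa)$, where $T$ is the standard coordinate on $\cW$ near $\kappa$. The key nontrivial point is that $\omega_X$ extends to a finite morphism $f\colon\cX\to\cY$ of affine formal schemes: this follows from the finiteness of $\omega_X$ on generic fibers together with the fact that, on reduced wide opens, the ring of power-bounded global functions is the integral closure of the corresponding ring on the base, so that $\cO_X^+(X)$ is finite over $\cO_\cY(\cY)$.

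Next I would apply Proposition \ref{Tconst} to $T$: since $T$ factors through $\cO_\cX(\cX)$ by Proposition \ref{Tform}(i), parts (i) and (ii) of that proposition yield pointwise constancy of $T$ mod $p^n$ on $U_{x,\cX}^{(n)}$ and constancy mod $p^n$ on $V_{x,\cX}^{(n)}$, for every $n\ge 1$. Since $x$ is the unique preimage of $\kappa$ under $\omega_X$ by the assumed reduction, Lemma \ref{XYn} provides an $n_0\ge 0$ such that, for every $n\ge n_0$,
\[ U_{x,\cX}^{(n)}=f^{-1}(U_{\kappa,\cY}^{(n)})\quad\text{and}\quad V_{x,\cX}^{(n)}=f^{-1}(V_{\kappa,\cY}^{(n)}). \]

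Finally, I would compute the residue neighborhoods of $\kappa$ inside $D^\circ(\kappa,r')$ directly from Example \ref{exdisc} applied to the parameter $u$: the defining conditions $|u|<p^{1-n}$ and $|u|\le p^{-n}$ translate, via $T-\kappa=\varpi u$ and $|\varpi|=r'$, into $|T-\kappa|<p^{1-n}r'$ and $|T-\kappa|\le p^{-n}r'$. Thus $U_{\kappa,\cY}^{(n)}=D^\circ(\kappa,p^{1-n}r')$ and $V_{\kappa,\cY}^{(n)}=D(\kappa,p^{-n}r')$, and substituting into the previous display yields the proposition. The main obstacle I expect is the verification that $\omega_X$ extends to a \emph{finite} morphism of affine formal models: this is essential for invoking Lemma \ref{XYn}, and while it is plausible from the structure of the eigencurve, it requires a careful argument using the reducedness of $X$ together with the characterization of $\cO_X^+(X)$ as the ring of power-bounded global functions.
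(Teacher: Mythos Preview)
Your proposal is correct and follows exactly the approach the paper takes: the paper's proof is the single sentence that Proposition \ref{Tconst} and Lemma \ref{XYn} together yield the result, and you have simply unpacked this, supplying the formal models, the explicit identification of $U_{\kappa,\cY}^{(n)}$ and $V_{\kappa,\cY}^{(n)}$ via Example \ref{exdisc}, and the observation that finiteness of $\omega_X$ at the level of formal models is what makes Lemma \ref{XYn} applicable. Your flagged concern about this last point is legitimate but is treated as standard in the paper's setup (the finiteness of the weight map on $\cE_r^{\le h}$ is recalled just before the proposition).
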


If we restrict ourselves to the ordinary part of the eigencurve, i.e. we consider the case $h=0$ in the above picture, the situation is much simpler: every ordinary irreducible component $X$ of $\cE$ is the rigid generic fiber of the formal scheme attached to a (schematic) Hida family; in particular, $X$ is finite and flat over $\cW$, so that we can take $r=1$ as a radius adapted to $h=0$. By \cite{hatnewirr}, conditionally on the spectral halo conjecture, this optimal situation only happens in the ordinary case, as the weight map is never finite on nonordinary components of $\cE$.

We state a simple corollary of Proposition \ref{ecurvelc} for ordinary components.

\begin{cor}\label{corhida}
Let $X$ be an ordinary component of $\cE$, and let $\kappa\in\cW(\Qp)$ be a weight with the property that $X$ has a single point $x$ of weight $\kappa$. Then there exists $n_0\in\Z_{\ge 0}$ such that, for every $n\ge n_0$, the pseudorepresentation $T$ is pointwise constant mod $p^n$ over $\omega_X^{-1}(D^\circ(\kappa,p^{1-n}))$, and constant mod $p^n$ over $\omega_X^{-1}(D(\kappa,p^{-n}))$. In particular, if $L$ is a $p$-adic field, then the Hecke eigensystems away from $p$ of any two overconvergent eigenforms attached to $L$-points of $X$ of weight in $D^\circ(\kappa,p^{1-n})$ are congruent mod $\pi_L^n$.
\end{cor}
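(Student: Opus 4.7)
The plan is to deduce Corollary \ref{corhida} as a direct consequence of Proposition \ref{ecurvelc}, exploiting the fact that in the ordinary case the weight map is finite over the whole weight space.

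First I would invoke the observation, made in the paragraph following Proposition \ref{ecurvelc}, that for any ordinary irreducible component $X$ of $\cE$ the weight map $\omega_X\colon X\to\cW$ is finite and flat over the entire weight space $\cW$, not just over a small disc $D^\circ(\kappa,r)$. This is a consequence of Hida theory: $X$ is the rigid generic fiber of the formal scheme attached to a Hida family, which is finite flat over the Iwasawa algebra. In particular, in the setup of Proposition \ref{ecurvelc} we may take $h=0$ and $r=1$; and since by hypothesis $\omega_X^{-1}(\kappa)=\{x\}$, no further shrinking is needed and we may likewise take $r'=1$. The map $\omega_X$ is then the generic fiber of a finite map of affine formal models $\cX\to\cW^+$, where $\cX=\Spf\cO_X^+(X)$ and $\cW^+$ denotes the natural affine formal model of the connected component of $\cW$ containing $\kappa$.

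Next I would substitute $r'=1$ into the conclusion of Proposition \ref{ecurvelc}, after identifying the relevant neighborhoods on the weight space side. By a direct computation as in Example \ref{exdisc}, one has $U_{\kappa,\cW^+}^{(n)}=D^\circ(\kappa,p^{1-n})$ and $V_{\kappa,\cW^+}^{(n)}=D(\kappa,p^{-n})$, so that Lemma \ref{XYn} yields an $n_0\in\Z_{\ge 0}$ for which $U_{x,\cX}^{(n)}=\omega_X^{-1}(D^\circ(\kappa,p^{1-n}))$ and $V_{x,\cX}^{(n)}=\omega_X^{-1}(D(\kappa,p^{-n}))$ for all $n\ge n_0$. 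Applying Proposition \ref{Tconst} to the continuous pseudorepresentation $T\colon G_\Q\to\cO_X(X)$ (which, being valued in $\cO_X^+(X)$ by Proposition \ref{Tform}, factors through $\cO_\cX(\cX)$) then gives both constancy statements for $n\ge n_0$.

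Finally, I would derive the Hecke eigensystem statement from the standard identity $T(\Frob_\ell)=T_\ell\in\cO_X(X)$ for each prime $\ell\nmid Np$, where $T_\ell$ is the rigid analytic function on $X$ interpolating the $\ell$-th Hecke eigenvalues of the overconvergent eigenforms attached to $L$-points of $X$. Pointwise constancy of $T$ mod $p^n$ over $\omega_X^{-1}(D^\circ(\kappa,p^{1-n}))$ then translates, point by point, into the mod $\pi_L^n$ congruence of the values of all $T_\ell$, $\ell\nmid Np$, at any two $L$-points of this subdomain, which is exactly the desired congruence of Hecke eigensystems away from $p$. The only nontrivial input in the whole argument is the appeal to Hida's finiteness theorem that permits taking $r=1$; everything else is a mechanical combination of Proposition \ref{ecurvelc}, Lemma \ref{XYn} and the defining property of the pseudorepresentation carried by the eigencurve.
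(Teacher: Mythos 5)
Your proposal is correct and follows the paper's own (implicit) argument: the paper states the corollary immediately after observing that in the ordinary case one can take $h=0$, $r=1$ because the weight map is finite on all of $\cW$, and then applies Proposition \ref{ecurvelc}; you reproduce this and, reasonably, unpack it by re-deriving the conclusion through Lemma \ref{XYn} and Proposition \ref{Tconst} with $r=r'=1$, and by identifying $U_{\kappa,\cW^+}^{(n)}$ and $V_{\kappa,\cW^+}^{(n)}$ via Example \ref{exdisc}. The only minor point worth flagging: in the last step, pointwise constancy mod $p^n$ as defined in the paper gives congruence mod $\pi_L^{\gamma_{L/\Q_p}(n)}$, which implies the claimed congruence mod $\pi_L^n$ since $\gamma_{L/\Q_p}(n)\ge n$; this intermediate step is worth spelling out rather than calling it a ``point by point'' translation.
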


\begin{rem}\mbox{ }
\begin{enumerate}
\item If the residual $G_\Q$-representation attached to the Hida family is absolutely irreducible, then Corollary \ref{pstorep} allows one to deduce from Corollary \ref{corhida} a mod $p^n$ pointwise constancy result for the $G_\Q$-representation attached to the Hida family, not just its trace.
\item We could have deduced the congruence in the last statement of Corollary \ref{corhida} by applying Theorem \ref{Unconst} directly to the Hecke eigenvalue of each Hecke operator away from $p$, given that it is interpolated by a power-bounded rigid analytic function on $X$.
\item For some $p$-adic families, it might be possible to show that the constancy neighborhoods of Proposition \ref{ecurvelc} and Corollary \ref{corhida} are optimal, by checking that the conditions of Proposition \ref{optimal} are satisfied by the associated (pseudo-)representation. This is the case, for instance, of a Hida family with coefficients in $\Z_p[[T]]$ (i.e., everywhere étale over the weight space), given that $1+T$ belongs to the image of the determinant. Note that in this special case Lemma \ref{XYn} holds trivially with $n_0=1$, so that the subsequent results also hold with $n_0=1$. 
\end{enumerate}
\end{rem}

\subsubsection{An explicit example}

We thank Alexandre Maksoud for pointing us to the following (conjectural) example of an explicit equation for a Hida family. In \cite[Section 7.3]{ghadim}, Dimitrov and Ghate guess the existence of a 3-adic Hida family of tame level 13 and ring of functions
\[ \mathbb I=\Z_3[[T]][Y]/(Y^2+T), \]
$T$ the weight variable. Clearly $\Z_3[[T]]\to\mathbb I$ is only ramified at $T=0$, which corresponds to weight 1 in the normalization of \emph{loc. cit.}. Then Corollary \ref{corhida} gives that any two classical eigenforms appearing in $\mathbb I$ are congruent mod $3^n$ if their weights are congruent to 1 mod $3^{n-1}$.

\bigskip

\noindent\textbf{Declarations.} 

\smallskip

\noindent \textit{Funding:} The paper was written while the first author was a postdoctoral fellow at the University of Luxembourg and at Heidelberg University. During the revision of the paper, the first author was employed at Heidelberg University and funded by the Deutsche Forschungsgemeinschaft (DFG, German Research Foundation) - Project-ID 444845124 – TRR 326.
The second author was a postdoctoral fellow at the Université de la Polynésie française, and was supported in part by the Agence Nationale de la Recherche under grant number ANR-20-CE40-0013.

\smallskip

\noindent \textit{Data availability:} No datasets were generated or analysed during the current study.

\smallskip

\noindent \textit{Conflict of interest:} No conflict of interest arose in the preparation of this paper.

\smallskip

\printbibliography

\end{document}